\newtheorem{thm}{Theorem}
\newtheorem{cor}[thm]{Corollary}
\newtheorem{theorem}{Theorem}
\newtheorem{lemma}[thm]{Lemma}
\newtheorem{proposition}[thm]{Proposition}
\theoremstyle{remark}
\newtheorem{remark}[theorem]{Remark}
\numberwithin{equation}{subsubsection}
\numberwithin{thm}{subsection}
\newcommand\C{\mathcal{C}} 
\newcommand\N{\mathbb{N}}
\newcommand\R{\mathbb{R}}
\DeclareMathOperator{\Hom}{Hom}
\DeclareMathOperator{\Tr}{Tr}
\DeclareMathOperator{\tr}{tr}
\DeclareMathOperator{\SU}{SU}
\newcommand\X{\mathfrak{X}}
\begin{document}

%page titre ams
\title[Ergodicity on moduli spaces of non-orientable surfaces]{Ergodic actions of mapping class groups on moduli spaces of representations of non-orientable surfaces}
\author{Frederic Palesi}
\address{Institut Fourier, UMR5582, UFR Mathématiques, Universit\'e Grenoble I\\
100 rue des Maths BP74, 38402 St Martin d'Heres, France} \email{fpalesi@ujf-grenoble.fr} 
\subjclass{Primary: 57M05; Secondary: 22D40, 20F34}
\date{\today}
\keywords{Non-orientable surface, Mapping class group, fundamental group, representation variety, Dehn twist, Ergodic theory}
\begin{abstract}Let $M$ be a non-orientable surface with Euler characteristic $\chi (M) \leq -2$.  We consider the moduli space of flat $\SU(2)$-connections, or equivalently the space of conjugacy classes of representations $$\X (M) = \Hom (\pi_1 (M) , \SU (2)) / \SU (2) .$$
There is a natural action of the mapping class group of $M$ on $\X (M)$. We show here that this action is ergodic with respect to a natural measure. This measure is defined using the push-forward measure associated to a map defined by the presentation of the surface group. This result is an extension of earlier results of Goldman for orientable surfaces (see \cite{go1}).
\end{abstract}
\maketitle

%Pour Geometria dedicatae
%\title{Ergodic actions of mapping class groups on moduli spaces of representations of non-orientable surfaces}
%\titlerunning{Ergodicity on moduli spaces of non-orientable surfaces}
%\author{Frederic Palesi}
%\institute{Institut Fourier, UMR5582, UFR Mathématiques, Universit\'e Grenoble I\\ 100 rue des Maths BP74, 38402 St Martin d'Heres, France\\\email{fpalesi@ujf-grenoble.fr}}
%\date{\today}
%\maketitle
%\keywords{Non-orientable surface, Mapping class group, fundamental group, representation variety, Dehn twist, Ergodic theory}
%\subclass{Primary: 57M05; Secondary: 22D40, 20F34}
%\begin{abstract}Let $M$ be a non-orientable surface with Euler characteristic $\chi (M) \leq -2$.  We consider the moduli space of flat $\SU(2)$-connections, or equivalently the space of conjugacy classes of representations $$\X (M) = \Hom (\pi_1 (M) , \SU (2)) / \SU (2) .$$ There is a natural action of the mapping class group of $M$ on $\X (M)$. We show here that this action is ergodic with respect to a natural measure. This measure is defined using the push-forward measure associated to a map defined by the presentation of the surface group. This result is an extension of earlier results of Goldman for orientable surfaces (see \cite{go1}). \end{abstract}
% \tableofcontents

\section{Introduction}
Let $M$ be a closed surface with $\chi (M) < 0$ and let $\pi$ denote its fundamental group. Let $G$ be a Lie group and consider the space of homomorphisms of $\pi$ into $G$, denoted $\Hom (\pi , G)$, and called a {\it representation variety}. The space of $G$-conjugacy classes of such homomorphisms, called the {\it character variety} is denoted $\X (M) = \Hom (\pi , G ) / G$. Geometrically, $\X (M)$ is the moduli space of flat principal $G$-bundles over $M$.

The mapping class group, denoted $\Gamma_M$, is defined as the group of isotopy classes of orientation-preserving diffeomorphisms of $M$ when the surface is orientable, and as the whole group of isotopy classes of diffeomorphisms when the surface is non-orientable. A classical result of Nielsen \cite{nielsen} tells us that the mapping class group of an orientable surface is isomorphic to the group $\mbox{Out}^+ (\pi)$ of positive outer automorphisms of $\pi$. When the surface is non-orientable, Mangler \cite{mangler} proved that the mapping class group is isomorphic to the full group $\mbox{Out} (\pi)$. Hence, in both cases, there is a natural action of $\Gamma_M$ on $\X (M)$ induced by the action of $\mbox{Aut} (\pi) \times \mbox{Aut} (G)$ on $\Hom (\pi , G)$ by left and right composition.

The dynamics of these actions have been extensively studied in the orientable case for various Lie groups (see \cite{goldman06} for a survey on the subject). In the following, we will focus on the case of a non-orientable surface with $G = \SU(2)$, which allows explicit calculations with trace coordinates.

When $M$ is orientable, there is a natural $\Gamma_M$-invariant symplectic structure on $\X (M)$ (see \cite{goldman84,goldman04}) which induces a volume form and hence a measure. In \cite{go1} Goldman uses this symplectic structure and a certain Hamiltonian $\R^n$-action defined on the character variety to show that the $\Gamma_M$-action is ergodic on $\X(M)$ with respect to this measure. However, when the surface $M$ is non-orientable, a symplectic structure may not exist on the character variety as the dimension of this space might be odd. So another approach is necessary to define a measure on $\X (M)$ in the non-orientable case.
In  \cite{witten} Witten defines and computes a volume on $\X (M)$ using the Reidemeister-Ray-Singer torsion (see e.g. \cite{freed}). In the case of an orientable surface, Witten proves that this volume equals the symplectic volume on the moduli space. In \cite{ho} Jeffrey and Ho prove that Witten's volume arises from the Haar measure, in the case of a non-orientable surface. In \cite{mulase} Mulase and Penkava compute the volume of the representation space using a certain volume distribution given by the push-forward measure associated to a presentation map of $\pi_1 (M)$ and their formula also agreed with Witten's result. Using this point of view, we define a $\Gamma_M$-invariant measure on the moduli space, denoted $\nu$.

The main result of this paper is the following:
\begin{theorem}\label{closed}
Let $M$ be a closed non-orientable surface such that $\chi (M) \leq - 2$ and let $G = \SU (2)$. Then the mapping class group $\Gamma_M$ acts ergodically on $\X (M)$ with respect to $\nu$.
\end{theorem}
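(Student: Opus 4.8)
The plan is to reduce ergodicity to the statement that every $\Gamma_M$-invariant measurable function on $\X(M)$ is $\nu$-almost everywhere constant, and then to exploit the rich supply of Dehn twists inside $\Gamma_M \cong \mathrm{Out}(\pi)$. First I would isolate a full-measure open subset $\X^{*}(M) \subset \X(M)$ consisting of classes of irreducible representations, so that on $\X^{*}(M)$ the stabilizers are as small as possible and trace functions give local coordinates; one checks that the complement (reducibles, representations into a fixed maximal torus) is $\nu$-null because it is cut out by extra equations and the push-forward measure $\nu$ has no atoms there. The surface carries two-sided simple closed curves—for instance the boundary $\mu^2$ of a M\"obius neighborhood of a one-sided curve $\mu$, as well as ordinary non-separating two-sided curves—and for each such $\gamma$ the mapping class group contains the Dehn twist $t_\gamma$.

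The central mechanism is to upgrade these discrete twists to continuous twist flows. On the locus where $\tr \rho(\gamma) \neq \pm 2$ the centralizer of $\rho(\gamma)$ in $\SU(2)$ is a maximal torus, and the twist flow $\Xi_\gamma^{s}$ acts on a circle fibre as rotation by an angle that is an irrational multiple of $2\pi$ for $\nu$-almost every point; hence the closure of the cyclic group $\langle t_\gamma\rangle$ contains the whole one-parameter group $\{\Xi_\gamma^{s}\}$. Consequently any $\Gamma_M$-invariant function is automatically invariant under all the twist flows attached to two-sided curves, and it then suffices to show that these flows, together with the change-of-coordinate action of the remaining mapping classes, act ergodically.

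To organize this I would argue by induction on $-\chi(M)$, using the decomposition $N_k \cong N_{k-2}\,\#\,T^2$ and, dually, cutting $M$ along a non-separating two-sided curve into a surface of lower complexity glued to an orientable handle. A gluing lemma—verified by hand in trace coordinates on the base cases (a small non-orientable surface with boundary, and the one-holed torus, where ergodicity of the twist flow is a direct $\SU(2)$ computation)—shows that if the action is ergodic on the pieces relative to the conjugacy class along the gluing curve, and if the twist flow along that curve mixes the fibres, then the action is ergodic on the glued surface with respect to $\nu$. The compatibility of $\nu$ with this cut-and-glue, namely that the push-forward/torsion measure disintegrates as a product of the piece-measures times Haar measure on the gluing circle, is the technical glue that keeps the induction running.

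The main obstacle I expect is precisely the non-orientable input, which has no analogue in Goldman's symplectic argument. One-sided curves admit no Dehn twist, so the available twist flows live only on two-sided curves, and I must check that this smaller family still fills out the fibres and moves the trace coordinates on the base ergodically. The relation $a_1^2\cdots a_k^2=1$ replaces the product of commutators, so the generic fibre can be odd-dimensional and there is no symplectic or Hamiltonian structure to invoke; every transitivity statement must instead be read off from the explicit push-forward description of $\nu$ together with the trace identities for the squares $\rho(\mu)^2$. Making the base case and the disintegration of $\nu$ genuinely compatible with the squared relation, rather than the commutator relation of the orientable theory, is where the real work lies.
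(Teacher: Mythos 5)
Your overall mechanism --- Dehn twists about two-sided curves acting as irrational rotations on circle fibres, upgraded via the ergodic decomposition lemma, combined with a cut-and-glue reduction --- is exactly the engine of the paper's proof, but two of your steps have genuine gaps. First, the induction $N_k \cong N_{k-2}\,\#\,T^2$ bottoms out in the wrong place. Decreasing the genus by two lands you, in the odd-genus case, on the connected sum of three projective planes (or, with boundary, on the two-holed projective plane and one-holed Klein bottle), all of Euler characteristic $-1$; for these the mapping class group action is \emph{not} ergodic, because there is an essential curve invariant under the whole mapping class group. So the base cases cannot be ``a small non-orientable surface with boundary'' chosen generically: they must be the specific surfaces of Euler characteristic $-2$, namely the two-holed Klein bottle $N_{2,2}$, the three-holed projective plane $N_{1,3}$, and the one-holed genus-three surface $N_{3,1}$, and ergodicity there is not a soft statement --- it occupies the bulk of the paper and is proved by exhibiting, in Fricke trace coordinates, explicit Dehn twists ($\tau_{BBC}$ on $N_{2,2}$; $\tau_{AAB}$, $\tau_{CAA}$, $\tau_{CAB^{-1}A^{-1}}$ on $N_{1,3}$; $\tau_{AACC}$ on $N_{3,1}$) that act as irrational rotations on a family of ellipses cut out by the Fricke relation.

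Second, your gluing step only gets you halfway. Disintegrating along the twist flow of a cutting curve $\gamma$ shows that an invariant function descends to the base of the fibration, but that base still records the trace $\tr\rho(\gamma)$ of the cutting curve itself, and nothing in the twist flow along $\gamma$ or in the mapping class groups of the complementary pieces moves that coordinate. Eliminating it requires a second, transverse family of twists: in the paper this is done by embedding a two-holed Klein bottle (even genus) or a four-holed sphere (odd genus) in which $\gamma$ sits as a non-peripheral curve, and verifying by explicit computation that a twist supported there acts ergodically in a direction that mixes the level sets of $\tr\rho(\gamma)$. Your appeal to ``the change-of-coordinate action of the remaining mapping classes'' names the right idea but does not identify which classes do the work or why they suffice, and this verification is precisely where the non-orientable trace identities (the squared relation rather than the commutator relation) enter and where the argument could fail if the relevant quadratic form were not positive --- positivity is guaranteed only by the Fricke inequality for two-generator $\SU(2)$ characters, which must be checked for each of the specific subgroup pairs used.
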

The analogous result for an orientable surface was proved by Goldman in \cite{go1}. 
In order to prove Theorem \ref{closed} we need to consider subsurfaces (with boundary) of $M$ and it will be useful to consider a more general version of this result for surfaces with boundary. Assume $M$ has $m$ boundary components denoted $\partial_1 M , ... , \partial_m M$. The inclusion maps $\partial_i M \hookrightarrow M$ induce the application
$$\partial ^{\#} : \X(M) \longrightarrow \X (\partial M) : = \prod_{i=1}^m \X (\partial_i M) .$$

Then $\X(M)$ can be viewed as a family of {\it relative character varieties} over $\X (\partial M)$. As each $\X (\partial_i M) $ is identified to the set $[ G ]$ of conjugacy classes in $G$, the base of this family is a product of copies of $[G]$. Specifically, let $ \{ C_1 , C_2 , ... , C_m \} $ be a set of elements of the fundamental group $\pi$ corresponding to the $m$ boundary components. Let $\C = (c_1 , ... , c_m)$ be an element of $[G]^m$, and define the {\it relative character variety} over $\C $ as 
$$\X_{\C} (M) = \partial_{\#}^{-1} (\C) = \{ [ \rho ] \in \X (M) \mid [ \rho (C_i) ] = c_i , 1 \leq i \leq m \}. $$
The disintegration of the measure $\nu$ on $\X (M)$ with respect to $\partial_{\#}$ is a measure $\nu_{\C}$ on the submanifold $\partial_{\#}^{-1} (\C)$.

For a surface with boundary, the mapping class group $\Gamma_M$ is identified with the group $\mbox{Out} (\pi , \partial M)$ of outer automorphisms of $\pi$ (respectively $\mbox{Out}^+ (\pi , \partial M)$, if the surface is orientable) which preserve the conjugacy class of every cyclic subgroup corresponding to a boundary component. Then $\Gamma_M$ acts on $\X (M)$ by outer automorphisms of $\pi$ which preserve the function $\partial_{\#}$. Hence $\Gamma_M$ acts on $\X_{\C} (M)$, for every $\C \in [G]^m$. The generalization of Theorem \ref{closed} is the following:

\begin{theorem}\label{thm:open}
Let $M$ be a compact non-orientable surface with $m$ boundary components such that $\chi (M) \leq -2$ and let $\C = (c_1 , ... , c_m) \in [G]^m$. Then the action of the mapping class group $\Gamma_M$ on $\X_{\C} (M)$ is ergodic with respect to the measure $\nu_{\C}$.
\end{theorem}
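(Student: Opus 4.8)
The plan is to argue by induction on the complexity $-\chi(M)$, using Dehn twists along two-sided simple closed curves as the ergodicity engine and reducing, at each step, either to Goldman's orientable result \cite{go1}, to a smaller non-orientable surface, or to a short list of minimal base cases. The guiding principle, as in the orientable setting, is that a $\Gamma_M$-invariant function must first be made constant on the relative character varieties of the pieces obtained by cutting, and then the twist flows along the cutting curves must be shown to mix the remaining "gluing" parameters.

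First I would record the behaviour of a Dehn twist $T_\gamma$ along an essential \emph{two-sided} simple closed curve $\gamma \subset M$. Such a twist preserves the fibration $\partial_{\#}$ and the measure $\nu_{\C}$, and on the open set where $\tr \rho(\gamma) \in (-2,2)$ it acts as a rotation of the circle recording how the restrictions of $\rho$ to the two sides of $\gamma$ are matched along their common boundary; this circle is the quotient of the centralizer (a maximal torus) of the regular element $\rho(\gamma)$. When $\tr \rho(\gamma) = 2\cos(\pi\alpha)$ with $\alpha$ irrational, the cyclic group $\langle T_\gamma \rangle$ has dense orbits in this circle and hence acts ergodically; since the irrational values are of full measure, the twist flow is ergodic on the fibres of $[\rho] \mapsto (\partial_{\#}[\rho], \tr \rho(\gamma))$.

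For the inductive step I would cut $M$ along a well-chosen two-sided curve $\gamma$. The hypothesis $\chi(M) \leq -2$ should guarantee a $\gamma$ whose complementary surface $M'$ splits into pieces each of which is either orientable (where \cite{go1} applies), or non-orientable with $\chi \leq -2$ (covered by the inductive hypothesis), or minimal. Given a bounded $\Gamma_M$-invariant $f$ on $\X_{\C}(M)$, invariance under the mapping classes supported on $M'$ lets the inductive hypothesis act fibrewise over the boundary data of $M'$ (which now includes the copies of $\gamma$), forcing $f$ to be constant on almost every relative character variety of $M'$; thus $f$ descends to a function of $(\partial_{\#}[\rho], \tr \rho(\gamma))$ alone. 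Invariance under $T_\gamma$ and the ergodicity of the preceding paragraph remove the gluing parameter, and a Fubini argument over the values of $\tr \rho(\gamma)$ — using the twist along a curve \emph{meeting} $\gamma$, which moves $\tr \rho(\gamma)$, when $\gamma$ is separating — shows that $f$ is $\nu_{\C}$-almost everywhere constant. The remaining base cases with $\chi=-2$ (four crosscaps; three crosscaps and one boundary; the twice-punctured Klein bottle; the thrice-punctured projective plane) I would treat by explicit computation in trace coordinates, describing $\X_{\C}$ concretely as the push-forward of Haar measure under the presentation map and checking directly that the twist flow along an interior two-sided curve, together with the residual symmetries, admits no non-trivial invariant set.

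The main obstacle I anticipate is the interaction between the crosscaps and the twist-flow machinery: a one-sided curve carries no Dehn twist, so the crosscap directions are not mixed by the flows used above. The crux is therefore to show that after exhausting the two-sided curves one is left only with crosscap parameters, and that these are genuinely absorbed — either by the crosscap slide (the Y-homeomorphism) or by realising the relevant data inside a larger orientable subsurface on which Goldman's theorem applies. Verifying that $\chi(M) \leq -2$ always leaves enough room to perform this last reduction, rather than stalling on a rigid low-complexity configuration, is where the delicate bookkeeping lies.
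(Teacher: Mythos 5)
Your plan is essentially the paper's proof: cut along two-sided circles, show that the Dehn twist acts as an irrational rotation on the gluing circle for almost every representation, combine the ergodic decomposition lemma with Goldman's theorem on the orientable pieces, handle the non-orientable pieces of Euler characteristic $-2$ (namely $N_{2,2}$, $N_{1,3}$, $N_{3,1}$) by explicit trace-coordinate computations, and finally remove the trace of the cutting curve by twisting along a transverse curve (in the paper, a curve inside an embedded two-holed Klein bottle in the even-genus case, and inside an embedded four-holed sphere in the odd-genus case). Two corrections to the write-up. First, the claim that $T_\gamma$ is ergodic on the fibres of $[\rho]\mapsto(\partial_{\#}[\rho],\tr\rho(\gamma))$ is false as stated: $T_\gamma$ fixes the conjugacy classes of the restrictions of $\rho$ to the complementary pieces, so it is ergodic only on the fibres of the finer map $j:[\rho]\mapsto([\rho|_A],[\rho|_B])$; your second paragraph applies it in the correct order (first descend via the subsurface mapping class groups, then kill the gluing circle), so the argument is unharmed, but the earlier sentence should be fixed. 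Second, in the $\chi=-2$ base cases a single interior two-sided twist does not suffice: for $N_{1,3}$ the paper uses three twists, about $AAB$, $CAA$ and $CAB^{-1}A^{-1}$, each an irrational rotation on a family of ellipses in a different pair of the coordinates $(a,x,z,d)$, and it is the triviality of the common refinement of their invariant data that reduces an invariant function to the boundary traces; the traces of the one-sided curves are absorbed in exactly this way (and, in even genus, by Goldman's theorem applied to the orientable surface obtained by cutting), so the crosscap slide is never actually needed. Finally, the paper avoids a genuine induction on $-\chi$ by reducing every case in a single cut to an orientable piece plus at most one non-orientable piece of Euler characteristic exactly $-2$, split according to the parity of the genus; this is precisely how it sidesteps your worry about cuts producing non-orientable pieces of Euler characteristic $-1$, where ergodicity fails.
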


This theorem includes Theorem \ref{closed} as the special case where $M$ has no boundary. The similar result for orientable surfaces was also proved by Goldman in \cite{go1}.

\begin{remark} For surfaces of Euler characteristic $-1$, the behavior of the $\Gamma_M$-action depends on the orientability: 
\begin{itemize}
\item If $M$ is orientable, namely a three-holed sphere or a one-holed torus, then the action of the mapping class group is ergodic on the relative character variety.
\item If $M$ is non-orientable, namely the two-holed projective plane, the one-holed Klein bottle or the connected sum of three projective planes, then the action of $\Gamma_M$ is not ergodic. In each of these cases, there is an essential curve which is invariant under the action of the mapping class group (see \cite{gendulphe}).
\end{itemize}
\end{remark}

\begin{remark}
For an orientable surface, the analog of Theorem \ref{thm:open} was extended to the general case of a compact Lie group $G$ by Pickrell and Xia in \cite{picxia1,picxia2}. Their approach relies on the study of the infinitesimal transitivity in the case of the one-holed torus and afterwards using sewing techniques on the representation variety. For non-orientable surfaces, one can expect that a similar result holds. However, as we can not have ergodicity for surfaces of Euler characteristic $-1$, we would have to study this infinitesimal transitivity in the case of the two-holed Klein bottle and the three-holed projective plane, which involve much more technical complications.
\end{remark}

\begin{remark}
The topological dynamics of these actions are more delicate as we do not ignore the subsets of null measure. We can hope that if a representation $\rho \in \Hom (\pi , \SU (2) ) $ has dense image in $\SU (2)$, then the $\Gamma_M$-orbit of $[\rho]$ is dense in $\X(M)$. This result is true if the surface $M$ is orientable and the genus of $M$ is strictly positive (\cite{prexia1,prexia2}). However in genus 0, there are representations $\rho$ with dense image but whose orbit $\Gamma_M \cdot [ \rho ]$ consists only of two points (see \cite{prexia3}). 
\end{remark}

\subsection*{Summary}

This paper is organized as follows.

In Section \ref{section:prelim}, we review some basic knowledge about non-orientable surfaces, their mapping class groups and moduli spaces. In Section \ref{section:mesure}, we define the $\Gamma_M$-invariant measure on $\X(M)$ using a certain volume distribution and its character expansion.

In Section \ref{section:flot}, we define the {\it Goldman flow} on non-orientable surfaces following Klein \cite{klein}. This is a circle action on a dense open subspace of the character variety of $M$. This action corresponds to the circle action defined by L. Jeffrey and J. Weitsman in \cite{jefwei} in the case of an orientable closed surface. This flow is related to a particular decomposition of the surface along a curve. In particular, the Dehn twist along this curve acts as a rotation on the orbit of the flow.

In Section \ref{section:pair} we study the case where $M$ is a non-orientable surface of even genus. In this case we split $M$ along a non-separating 2-sided curve $X$ to obtain an orientable surface $A$ with two additional boundary components. The orbits of the Goldman flow associated to $X$ are the fibers of the map $\X(M) \rightarrow \X (A)$. The Dehn twist about $X$ acts as a rotation on this fiber, and for almost all representation this rotation is irrational and hence ergodic. We infer that a $\Gamma_M$-invariant function on $\X (M)$ depends only on its value on $\X (A)$. Then the ergodicity in the case of an orientable surface proves that the $\Gamma_M$-invariant function depends only on its value at $X$. Then consider an embedding of a two-holed Klein bottle inside $M$, such that $X$ cuts open the two-holed Klein bottle into a four-holed sphere. We can find trace coordinates on the character variety of the two-holed Klein bottle. The explicit calculations for the action of a certain Dehn twist in these coordinates, allow us to settle the Theorem \ref{thm:open} in the case of a two-holed Klein bottle. In particular, this shows that a $\Gamma_M$-invariant function on $\X(M)$ does not depend on its value at $X$, which proves the theorem.

If $M$ is a non-orientable surface of odd genus, then it is impossible to cut open $M$ along a 2-sided curve into one or two orientable surfaces. Instead of that, we split $M$ along a separating curve $C$ into two parts denoted $A$ and $B$, such that $A$ is an orientable surface and $B$ is a non-orientable surface of Euler characteristic $-2$. The surface $B$  can be of two kind, a three-holed projective plane or a one-holed non-orientable surface of genus $3$. For these surfaces, we use trace cordinates to make explicit calculations for the action of Dehn twists. These calculations are contained in Section \ref{section:-2} and setlle the Theorem \ref{thm:open} in the case of a non-orientable surface of odd genus with Euler characteristic $-2$. 

In Section \ref{section:conclusion}, we use the Goldman flow associated to the separating curve $C$ to show that the Dehn twist about $C$ acts as a rotation on the fiber of the map $\X(M) \rightarrow \X(A) \times \X (B)$. For almost all representation, this rotation is ergodic. Hence, a $\Gamma_M$-invariant depends only on its value at $C$. Finally we consider an embedding of a four-holed sphere into $M$ such that $C$ is a separating non-trivial curve in it. The ergodicity for the four-holed sphere allows us to prove the theorem.

\section{Preliminaries}\label{section:prelim}

\subsection{Non-orientable surfaces}
We summarize some basic notions and results about on non-orientable surfaces and their mapping class groups. For more details and proofs, we refer to \cite{korkmaz,lickorish,mangler,stukow}.\\

Let $M $ be a compact non-orientable surface of genus $g \geq 1$ and with $m$ boundary components, denoted $N_{g,m}$. The boundary components of $M$ are denoted $$\partial M = C_1 \sqcup ... \sqcup C_m .$$
Recall  that $N_{g,0}$ is a connected sum of $g$ projective planes, and that $N_{g,m}$ is obtained by removing $m$ open disks of $N_{g,0}$. The fundamental group $\pi_1 ( N_{g,0} )$ admits two important presentations that we recall here. The first is the natural presentation which exhibits the fact that $N_g$ is a connected sum of projective planes.
$$\pi_1(N_{g,0}) = \langle A_1 , ... , A_g \, \mid \, A_1^2 \cdots A_g^2  \rangle .$$
Another presentation can be obtained by making use of the homeomorphism between the connected sum of three projective planes and the connected sum of a torus with one projective plane. The presentation depends on the parity of $g$:
$$\pi_1 ( N_{2k +1,0}) = \langle A_1 , B_1 , ... , A_k , B_k , C \, \mid \, [A_1 , B_1] ... [A_k , B_k] C^2   \rangle $$
$$\pi_1 ( N_{2k +2,0}) = \langle A_1 , B_1 , ... , A_k , B_k , C ,D \, \mid \, [A_1 , B_1] ... [A_k , B_k] C^2 D^2 \rangle . $$

A simple closed curve on a surface $M$ is called {\it two-sided} if a regular neighborhood of it within $M$ is homeomorphic to an annulus. A simple closed curve is called {\it one-sided} if a regular neighborhood of it within $M$ is homeomorphic to a M\" obius strip. A {\it circle} on $M$ is a closed connected one-dimensional submanifold of $M$. We denote by $M | X$ the surface obtained by cutting open $M$ along a circle $X$, defined as the surface with boundary for which there is an identification map $i_X : M | X \rightarrow M$ satisfying 

$\bullet$ the restriction of $i_X$ to $i_X^{-1} (M - X)$ is a diffeomorphism;

$\bullet$ $i_X^{-1} (X)$ consists of two components $X_+ , X_- \subset \partial (M|X)$, to each of which the restriction of $i_X$ is a diffeomorphism onto $X$.

A circle $X$ is called {\it non-separating} if $M|X$ is connected, and {\it separating} otherwise. A separating circle is {\it trivial} if one of the two components is either a disk, a cylinder or a M\" obius strip.\\

\subsection{Mapping class groups of non-orientable surfaces}

The mapping class group $\Gamma_M$ is defined to be the group of isotopy classes of diffeomorphisms $\phi : M \longrightarrow M$ which restrict to the identity on each boundary component, {\it i.e.} $\phi_{\mid C_i } = Id_{\mid C_i }$ for all $i$. Let $X$ be a two-sided circle on $M$, and let $U$ be a regular neighborhood of $X$ within $M$. The annulus $U$ is homeomorphic to $\mathbb{S}^1 \times [0,1]$, and we chose coordinates $(s,t)$ on this annulus. Let $f$ be the diffeomorphism of $M$ that is the identity outside of $U$, and that is defined inside $U$ as
$$f (s , t) = (s e^{2 i \pi t} , t).$$
The isotopy class of this map is called the {\it Dehn twist} about $X$, denoted $\tau_X$. Observe that this definition does not make sense for a one-sided curve.

For an orientable surface $S$, the mapping class group $\Gamma_S$ is generated by Dehn twists, and the number of generators can be chosen to be finite (see e.g. \cite{lickorish2,nielsen}). For a non-orientable surface $M$, the Dehn twists generate an index 2 subgroup of $\Gamma_M$, called the {\it twist subgroup} of $M$. Henceforth in this case, we need to define another family of diffeomorphisms of $M$ to find a generating set for $\Gamma_M$.
  
Consider a M\" obius strip $M$ with one hole, or equivalently a projective plane from which the interiors of two disks have been removed. Attach another M\" obius strip $N$ along one of the boundary components. The resulting surface $K$ is a Klein bottle with one hole. By sliding $N$ once along the core of $M$, we get a diffeomorphism $y_K$ of $K$ fixing the boundary of $K$ (cf. the Figure 1 below). Assume that this diffeomorphism is the identity in a neighborhood of the boundary of $K$. If $K$ is embedded in a surface $S$, we define $y$ as the diffeomorphism of $S$ that is the identity outside of $K$ and is given by $y_K$ inside $K$. The isotopy class of $y$ is called a {\it crosscap slide}. The mapping class $y^2$ is equal to a Dehn twist about the boundary of $K$.

We represent crosscaps as shaded disks in the picture.

\begin{figure}[ht]\label{fig:crosscap}
\begin{center}
\scalebox{0.6} % Change this value to rescale the drawing.
{
\begin{pspicture}(0,-3.150067)(19.815865,3.1500673)
\pscircle[linewidth=0.04,dimen=outer,fillstyle=crosshatch*,hatchwidth=0.04,hatchangle=0,hatchsep=0.14](12.227924,0.9565343){0.3520754}
\pscircle[linewidth=0.04,dimen=outer,fillstyle=crosshatch*,hatchwidth=0.04,hatchangle=0,hatchsep=0.14](12.246314,-0.9367367){0.35368642}
\psbezier[linewidth=0.06](1.1572288,0.07721535)(1.6646175,0.31107286)(2.1212676,0.38902533)(2.1720064,0.077215314)(2.2227452,-0.2345947)(1.5377704,-0.078689694)(1.1064899,0.077215314)(0.6752094,0.23312032)(0.040973444,0.23312032)(0.09171232,0.8307562)(0.1424512,1.428392)(3.6886823,3.0800672)(3.7944503,-0.02)(3.9002182,-3.1200671)(0.066128455,-1.6462648)(0.092570454,-0.7569012)(0.11901245,0.13246232)(0.11901245,-0.12164155)(0.066128455,0.8185428)
\psbezier[linewidth=0.06,linestyle=dashed,dash=0.16cm 0.16cm](0.066128455,-0.75690114)(0.14545445,-0.07082072)(0.9122724,-0.07082077)(1.0973665,0.08164155)
\pscircle[linewidth=0.04,dimen=outer,fillstyle=crosshatch*,hatchwidth=0.04,hatchangle=0,hatchsep=0.14](2.2952008,-1.3516225){0.3439004}
\psbezier[linewidth=0.02](0.08,0.12006722)(0.0,-1.0999328)(3.02,-1.2399328)(3.06,0.20006722)(3.1,1.6400672)(0.02,0.9000672)(0.12,0.10006722)
\pscircle[linewidth=0.04,dimen=outer,fillstyle=crosshatch*,hatchwidth=0.04,hatchangle=0,hatchsep=0.14](17.833004,1.0544889){0.34699532}
\pscircle[linewidth=0.04,dimen=outer,fillstyle=crosshatch*,hatchwidth=0.04,hatchangle=0,hatchsep=0.14](17.931395,-0.7587821){0.34860635}
\psline[linewidth=0.02cm](14.14,0.020067217)(10.3,0.0)
\psbezier[linewidth=0.02](18.279072,-0.5663296)(19.721975,-0.023900408)(16.701946,0.3048445)(17.222061,1.1924559)(17.742178,2.0800672)(18.346184,1.5869498)(18.631409,1.0773951)(18.916634,0.56784046)(19.805864,-1.9799329)(18.228739,-0.99369806)
\psbezier[linewidth=0.02](17.708622,-0.53345513)(16.852947,0.008974042)(16.187382,0.38072017)(16.769056,1.1760186)(17.350733,1.971317)(17.993847,1.9650065)(18.413296,1.734885)(18.832745,1.5047636)(19.169638,-0.30333364)(19.74,-0.019932782)
\psbezier[linewidth=0.02](17.64151,-0.97726065)(16.517387,-1.7169368)(17.020727,-0.007463201)(15.9469385,-0.007463201)
\psbezier[linewidth=0.06](6.5572286,0.11721535)(7.0646176,0.35107288)(7.5212674,0.42902532)(7.572006,0.11721531)(7.622745,-0.19459471)(6.9377704,-0.038689695)(6.5064898,0.11721531)(6.0752096,0.2731203)(5.4409733,0.2731203)(5.491712,0.8707562)(5.5424514,1.468392)(9.088682,3.1200671)(9.19445,0.02)(9.300219,-3.0800672)(5.4661283,-1.6062647)(5.4925704,-0.71690124)(5.5190125,0.17246233)(5.5190125,-0.08164155)(5.4661283,0.8585428)
\psbezier[linewidth=0.06,linestyle=dashed,dash=0.16cm 0.16cm](5.4661283,-0.7169012)(5.5454545,-0.030820716)(6.3122725,-0.030820774)(6.4973664,0.12164155)
\pscircle[linewidth=0.04,dimen=outer,fillstyle=crosshatch*,hatchwidth=0.04,hatchangle=0,hatchsep=0.14](7.695201,-1.3116224){0.3439004}
\psbezier[linewidth=0.02](5.52,0.04006722)(5.4214625,-0.9550661)(7.22,-0.5799328)(7.12,-0.75993276)(7.02,-0.93993276)(7.0414543,-1.7338455)(7.7,-1.7399328)(8.358545,-1.7460201)(8.167614,-0.8962331)(7.96,-0.6799328)(7.752386,-0.46363246)(9.14,-0.07993279)(8.42,0.68006724)(7.7,1.4400672)(5.6185374,1.0352006)(5.52,0.04006722)
\pscircle[linewidth=0.06,dimen=outer](12.21,0.010067217){1.95}
\pscircle[linewidth=0.06,dimen=outer](17.85,0.07006722){1.95}
\psline[linewidth=0.08cm](3.98,0.0)(5.22,-0.019932782)
\psline[linewidth=0.08cm](5.2,-0.019932782)(4.92,0.18006721)
\psline[linewidth=0.08cm](5.18,-0.019932782)(4.94,-0.19993278)
\psline[linewidth=0.08cm](14.44,0.04006722)(15.68,0.020067217)
\psline[linewidth=0.08cm](15.66,0.020067217)(15.38,0.22006722)
\psline[linewidth=0.08cm](15.64,0.020067217)(15.4,-0.15993278)
\usefont{T1}{ptm}{m}{n}
\rput(4.5465627,0.5400672){\LARGE y}
\usefont{T1}{ptm}{m}{n}
\rput(15.046562,0.56006724){\LARGE y}
\end{pspicture} 
}
\end{center}
\caption{Crosscap Slide}
\end{figure}
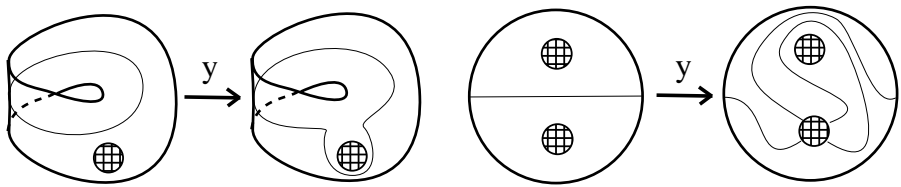

For $M$ a non-orientable surface, the mapping class group $\Gamma_M$ is generated by Dehn twists and crosscap slides (see \cite{lickorish}). Moreover, the number of generators can be chosen to be finite (see \cite{korkmaz}).

% Peut-ï¿½tre dire ï¿½ ce moment lï¿½ que MCG = Out(pi)

%\subsection{Lemma of ergodic decomposition}
%Throughout this paper, we will use a classical result of measure theory as a tool to prove ergodicity. We recall this result here and refer to (\cite{furst} Theorem 5.8) for a proof:

%\begin{lemma}\label{ergodic} {\bf of ergodic decomposition}\\
%Let $(X, \mathfrak{B} , \mu ) $ a measured space, $Y , Z $  Borel spaces, and $ F : X \longrightarrow Y$ a measurable map. Suppose that $\Gamma$ is a group of automorphisms of $(X, \mathfrak{B} , \mu ) $ such that $F$ is $\Gamma$-invariant. Let $\mu_y$ be the measures on $F^{-1}(y)$ obtained by disintegrating $\mu$ over $F$. Let $h : X \longrightarrow Z$ be a measurable $\Gamma$-invariant function.\

%Suppose that the action of $\Gamma$ is ergodic on the fiber $(F^{-1}(y) , \mu_y )$ for almost all $y \in Y$.\

%Then there exists a measurable function $H : Y \longrightarrow Z $ such that $h = H \circ F $ almost everywhere.
%\end{lemma}

\section{The construction of an invariant measure on the moduli space}\label{section:mesure}

The aim of this section is to define a natural measure on $\X (M)$ that is invariant under the action of the mapping class group. First, we define a measure on the representation space $\Hom (\Pi , G)$ in a more general context using ideas from \cite{mulase}.

\subsection{Measure on $\Hom (\Pi, G)$}

Let $G$ be a compact Lie group. Let $$\Pi= \langle a_1 , \dots , a_k | q_1 (a_1, \dots , a_k) , \dots , q_r (a_1 , dots , a_k) \rangle$$ be a finitely presented group generated by $k$ elements with $r$ relations. We associate the {\it presentation map}
\begin{align*}
q : G^k &\longrightarrow G^r \\
x &\longmapsto (q_1 (x) , \dots , q_r (x))
\end{align*}
For $x = (x_1 , \dots , x_k)$, the element $q_j (x) = q_j (x_1 , \dots , x_k)$ of $G$ is obtained when we replace in the word $q_j (a_1 , \dots , a_k)$ the letters $a_i$ by elements $x_i$ of $G$.

There is a canonical identification between $\Hom (\pi , G)$ and the fiber $q^{-1} (1 , \dots , 1)$ of the presentation map provided by: 
\begin{equation}
p ( \Hom (\Pi , G) ) = q^{-1} (1 , \dots , 1)
\end{equation}
where the map $p$ is
\begin{align*}
p : \Hom (\Pi , G) & \longrightarrow G^k \\
\phi & \longmapsto (\phi (a_1) , \dots , \phi (a_k)).
\end{align*}
Let $dx$ be a Haar measure on $G$. The group $G$ being compact, the measure is left and right invariant. The Dirac distribution on $G$ is the linear continuous functional $\delta : C^{\infty} (G) \rightarrow \mathbb{R}$ given by:
$$ g \mapsto \int_G \delta(x) g(x) dx = g(1) , \hspace{1cm} \mbox{for any } g \in \C^{\infty} (G) $$
The Dirac distribution on $G^r$ is defined by $ \delta_r (w_1 , \dots , w_r ) = \delta(w_1) \dots \delta (w_r) $.

Let $f_q$ be the {\it volume distribution} defined as
$$f_q (w) = \int_{G^k} \delta_r (q(x) \cdot w^{-1} ) dx_1 \cdots dx_k , \hspace{1cm} w \in G^r $$
This distribution equals the linear continuous functional  $C^{\infty} (G^r) \rightarrow \R$, 
$$g \mapsto \int_{G^k} g(q ( x) )  dx_1 \cdots dx_k = \int_{G^r} f_q (w) g(w) dw_1 \cdots dw_r .$$

Distributions cannot be evaluated in a meaningful way in general. However, a distribution $f$ is said to be {\it regular} at $w \in G^r$ if there is an open neighborhood $U$ of $w$ such that the restriction of $f$ to $U$ is a $C^{\infty}$ function on $U$.

Assume that the volume distribution $f_q$ is regular at $(1, \dots , 1) \in G^r$.  Let $\mu$ be the borelian measure on $\Hom (\Pi , G)$ defined by
\begin{equation}\label{def:mesure}
\mu_q ( U ) = \int_{G^k} \delta_r (q ( x) ) {\bf 1}_{p(U)} ( x ) dx_1 \cdots dx_k
\end{equation}
for any borelian $U \subset \Hom (\Pi , G)$, where ${\bf 1}_{E}$ is the characteristic function of $E$. The total volume $\mu_q (\Hom (\Pi , G) ) = f_q (1)$ is well-defined, and hence $\mu_q$ is a finite measure on $\Hom(\Pi, G)$.

\subsection{Invariance of the measure}

The measure $\mu_q$ is defined using the presentation $q$ of the group $\Pi$. The following proposition shows that, under certain hypotheses, the measure does not depend on the choice of the presentation of $\Pi$.

\begin{proposition}\label{prop:inv}
Let $q$ and $s$ be two presentations of the same group $\Pi$
$$\Pi = \langle a_1 , \dots , a_k | q_1 , \dots , q_r \rangle  = \langle b_1 , \dots , b_l | s_1 , \dots , s_t \rangle . $$ 
Assume that $k-r = l-t$. If the volume distributions $f_q$ and $f_s$ associated to the presentation maps $q$ and $s$ are regular at $(1,\dots , 1) \in G^r$ and $(1 , \dots , 1) \in G^l$ respectively, then the measures $\mu_q$ and  $\mu_s $ coincide.
\end{proposition}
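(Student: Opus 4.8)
The plan is to reduce everything to \emph{Tietze transformations}. By Tietze's theorem, any two finite presentations of the same group $\Pi$ are related by a finite sequence of elementary moves of two kinds: (T1) inserting or deleting a relation that is a consequence of the remaining ones, and (T2) inserting or deleting a generator together with a relation that defines it as a word in the other generators. It therefore suffices to show that each elementary move leaves the measure $\mu$ unchanged. I would first check that the moves are compatible with the regularity hypothesis along the chain, and record that the deficiency $k-r$ is the relevant bookkeeping quantity: the hypothesis $k-r=l-t$ guarantees that, along any Tietze chain joining $q$ to $s$, the relation-insertions are matched by relation-deletions, so that the two measures are compared between ambient spaces of consistent dimension.

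Next I would dispatch the generator moves (T2), which I expect to be a direct computation. Suppose we pass from $q$ to $\tilde q$ by adjoining a generator $a_{k+1}$ and the relation $a_{k+1}=w(a_1,\dots,a_k)$, so that the new presentation map is $\tilde q(x,y)=(q(x),\,y\,w(x)^{-1})$ on $G^k\times G$. Substituting into the definition \eqref{def:mesure} and integrating out the new variable $y$, the factor $\delta(y\,w(x)^{-1})$ sifts out $y=w(x)$ by right-invariance of Haar measure; since the identification $p$ forces $\phi(a_{k+1})=w(\phi(a_1),\dots,\phi(a_k))$, the characteristic function is unchanged and one obtains $\mu_{\tilde q}=\mu_q$ exactly. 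These moves preserve $k-r$ and need nothing beyond the sifting property of $\delta$.

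The crux, and the step I expect to be the main obstacle, is the relation move (T1): adjoining a redundant relation $v$, lying in the normal closure of $q_1,\dots,q_r$, replaces $q$ by $\hat q=(q,v)\colon G^k\to G^{r+1}$ and inserts a factor $\delta(v(x))$ into the integrand. Because $v$ is redundant, $v\equiv 1$ on the fiber $q^{-1}(1,\dots,1)$, so the differential of $v$ vanishes on the tangent space to that fiber and the combined map $\hat q$ has a differential whose rank drops there. Consequently the product $\delta_r(q(x))\,\delta(v(x))$ cannot be evaluated naively, and it is precisely here that the regularity hypothesis on $f_{\hat q}$ is indispensable: it is what makes the push-forward a genuine function near the identity and allows one to compute, via a coarea/disintegration argument together with the Fox-derivative description of $v$ as a product of conjugates of the $q_j$, that the extra Dirac factor contributes a multiplicative constant equal to $1$. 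Establishing this cancellation — and verifying that regularity is inherited at each intermediate presentation of the chain — is the technical heart of the argument; once it is in place, invariance under all elementary moves yields $\mu_q=\mu_s$.
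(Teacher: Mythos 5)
Your reduction to Tietze transformations founders on exactly the step you flag as the crux, and I do not believe that step can be repaired along the lines you sketch. The elementary move (T1) changes the deficiency: passing from $q$ to $\hat q=(q,v)$ replaces $k-r$ by $k-r-1$, so at that intermediate stage you are forced to compare measures attached to presentations of \emph{different} deficiency --- precisely the situation the hypothesis $k-r=l-t$ is there to exclude. Worse, since the redundant relation $v$ is identically $1$ on the fiber $q^{-1}(1,\dots,1)$, the extra factor $\delta(v(x))$ is supported exactly where $\delta_r(q(x))$ already concentrates; the product behaves like a square of a Dirac distribution along the fiber, so the volume distribution $f_{\hat q}$ is generically \emph{not} regular at the identity and the intermediate measure $\mu_{\hat q}$ is not defined. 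Your proposed rescue --- that regularity of $f_{\hat q}$ plus a coarea/Fox-derivative computation shows the extra Dirac factor contributes a factor of $1$ --- assumes a regularity that fails and asserts a cancellation that is nowhere established; observing that insertions and deletions balance out over the whole chain does not help, because your strategy requires each individual move to preserve a well-defined measure.

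The paper avoids (T1) entirely. It first pads the shorter presentation with new generators $a_{k+1},\dots,a_l$ and relators $q_{r+j}=a_{k+j}$ (a deficiency-preserving operation handled by the sifting property of $\delta$, essentially your (T2) computation, which is fine), reducing to two presentations with the same numbers of generators and relations. It then invokes Lemma \ref{lem:foncteur} (from Mulase--Penkava): the two generating sets are expressible in terms of each other by words inducing mutually inverse analytic bijections $a,b\colon G^k\to G^k$ with $q=s\circ b$, and Lemma \ref{lem:auto}: such word maps preserve the Haar volume form up to sign. The equality $\mu_q=\mu_s$ then follows from a change of variables in \eqref{def:mesure}. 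If you want to salvage a Tietze-style argument, you would have to group each relation-insertion with a compensating deletion and show the \emph{composite} preserves the measure --- but that composite is exactly what the word-automorphism lemma packages, so you would in effect be reproving it.
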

\begin{proof}
First, assume that $k=l$ and $r=t$. In this case, the proof of this proposition is deeply related to the two following lemmas, whose proofs can be found in \cite{mulase}.
\begin{lemma}\label{lem:foncteur}
Let $q$ and $s$ be two presentations of the same group $\Pi$
$$\Pi = \langle a_1 , \dots , a_k | q_1 , \dots , q_r \rangle  = \langle b_1 , \dots , b_k | s_1 , \dots , s_r \rangle . $$ 
Then for every $a_i , i= 1, \dots, k$ there is a word $a_i (b)$ in the generators $b_1 , \dots , b_k$, and for every $b_j , j=1 , \dots , k$ there is a word $b_j (a)$ in the generators $a_1 , \dots , a_k$, such that the maps $a : G^k \rightarrow G^k$ and $b : G^k \rightarrow G^k$ associated to these words are bijective and the following diagram:
$$
\xymatrix{
G^k \ar@{=}[r] & G^k \ar[r]^q \ar[d]_{\sim}^b & G^r \\
G^k  \ar[u]^a_{\sim} \ar@{=}[r]  & G^k \ar[r]^s & G^r }
$$
is commutative. Moreover, 
 $$q^{-1}(1) = b^{-1} ( s^{-1} (1) ) , \hspace{1cm} \mbox{ and } \hspace{1cm} s^{-1}(1) = a^{-1} ( q^{-1} (1) ).$$
\end{lemma}

The isomorphisms $a$ and $b$ are real analytic automorphisms of the real analytic manifold $G^k$.

\begin{lemma}\label{lem:auto}
Suppose that the map,
\begin{align*}
b : G^k & \longrightarrow G^k \\
(x_1 , \dots , x_k) = x & \longmapsto (b_1 (x) , \dots , b_k(x) )
\end{align*}
is an analytic automorphism of the real analytic manifold $G^k$ given by $k$ words $b_1 , \dots , b_k$ in $x_1 , \dots , x_k$ such that its inverse has the same form. Then the volume form $\omega^k$ of $G^k$ corresponding to the product of Haar measures $dx_1 \dots dx_k$ is invariant under the automorphism $b$ up to a sign, {\it i.e.} $ b^* \omega^k = \pm \omega^k$.
\end{lemma}

Now let the two presentation $q$ and $s$ satisfy the hypotheses of Lemma \ref{lem:foncteur}, and let $a$ and $b$ be the analytic automorphisms of $G^k$ given by this lemma. Let $p_q$ and $p_s$ be the maps $\Hom (\Pi_i , G) \rightarrow G^k$. By Lemma \ref{lem:foncteur}, we have $p_s (V) = b (p_q (V))$ and $\delta_r (q (x)) = \delta_r (s(b(x))) $ and by Lemma \ref{lem:auto} the measure $dx_1 \cdots dx_k$ on $G^k$ is invariant by the automorphism $b$. Hence, for $V$ a borelian of $\Hom (\Pi, G)$ we have
\begin{align*}
\mu_s (V) & = \int_{G^k} \delta_r (s ( x) ) {\bf 1}_{p_2 (V)} ( x ) dx_1 \cdots dx_k \\
& = \int_{G^k} \delta_r (s(b(b^{-1}( x) ))) {\bf 1}_{b(p_1(V))} ( x ) dx_1 \cdots dx_k \\
& = \int_{G^k} \delta_r (q ( b^{-1}( x)) ) {\bf 1}_{p_1(V)} ( b^{-1}(x) ) dx_1 \cdots dx_k \\
& = \mu_q (V)
\end{align*}

In the general case, we can assume without loss of generality that $k < l$. We add the new generators $a_{k+1} , \dots , a_l$ and relators $q_{r+1} =  a_{k+1} , \dots , q_{r+l-k} = a_l $ to the presentation $q$ of the group $\Pi$. The new presentation $q'$ given by
$$\langle a_1 , \dots , a_l | q_1 , \dots , q_{r+l-k} \rangle$$
is also a presentation of the group $\Pi$. 
If $f_q$ is regular at $(1 , \dots , 1) \in G^r$ then the volume distribution $f_{q'}$ of the new presentation $q '$ is also regular at $(1, \dots , 1) \in G^l$. Consider the following commutative diagram
$$
\xymatrix{
G^k \times G \ar[r]^{ (q , id ) } & G^r \times G  \\
G^k  \ar[u]^{i_k} \ar[r]_{q}  & G^r \ar[u]_{i_r} }
$$
where $i_k$ and $i_r$ are the canonical injection of $G^k$ in $G^k \times \{1\} \subset G^k \times G$. The identity 
$$\int_{G^k \times G} \delta_r (q(x)) \delta (x) dx_1 \cdots dx_k \cdot dx = \int_{G^k} \delta_r (q(x)) dx_1 \cdots dx_k ,$$
implies that the measure $\mu_q$ and $\mu_{q'}$ defined by the presentations $q$ and $q '$ coincide. The two presentations $q '$ and $s$ have the same number of generators and relations and hence the measure $\mu_{q'}$ and $\mu_s$ coincide. This ends the proof of Proposition \ref{prop:inv}.\end{proof}

The natural action of $\mbox{Aut} (\Pi) \times \mbox{Aut} (G)$ on the representation space $\Hom(\Pi, G)$ is given by :
$$ (\tau , \alpha) \cdot \rho \longmapsto \alpha \circ \rho \circ \tau^{-1}$$
for any $\rho \in \Hom(\Pi, G)$ and $(\tau, \alpha) \in \mbox{Aut}(\Pi) \times \mbox{Aut}(G)$. The group of inner automorphism of $G$, denoted $\mbox{Inn} (G)$, is the subgroup of $\mbox{Aut}(G)$ consisting of elements of the form 
$L_g : G \rightarrow G, L_g (h) = g h g^{-1}$ for all $h \in G$, with $g \in G$. We have the following proposition : 
\begin{proposition}\label{prop:inn}
Let $q$ be a presentation of $\Pi$ such that the distribution $f_q$ is regular at $(1, \dots , 1) \in G^r$. The measure $\mu_q$ on the representation space $\Hom ( \Pi , G)$ is invariant under the action of the group of inner automorphisms of $G$.
\end{proposition}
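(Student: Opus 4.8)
The plan is to show that an inner automorphism $L_g$ acts on $\Hom(\Pi,G)$ in a way that, after transport to $G^k$ via $p$, becomes diagonal conjugation, and that all three ingredients in the definition of $\mu_q$ --- the Haar measure $dx_1\cdots dx_k$, the presentation map $q$, and the Dirac distribution $\delta_r$ --- are compatible with that conjugation. First I would record the equivariance. Write $T_g \colon \Hom(\Pi,G)\to\Hom(\Pi,G)$, $\rho\mapsto L_g\circ\rho$, and let $c_g\colon G^k\to G^k$ and $\tilde c_g\colon G^r\to G^r$ denote the diagonal conjugations $(x_1,\dots,x_k)\mapsto(gx_1g^{-1},\dots,gx_kg^{-1})$. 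Then $p\circ T_g=c_g\circ p$ is immediate from the definition of $p$, so $p(T_g U)=c_g(p(U))$ for every Borel set $U$.

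Second, I would check the three compatibilities. (i) Since the Haar measure on the compact group $G$ is bi-invariant, conjugation $h\mapsto ghg^{-1}$ preserves it, so $c_g$ preserves $dx_1\cdots dx_k$ on $G^k$. (ii) Each relator word $q_j$ is a product of the letters $x_i^{\pm1}$ and conjugation by $g$ is a homomorphism, so $q_j(gx_1g^{-1},\dots,gx_kg^{-1})=g\,q_j(x)\,g^{-1}$; therefore $q\circ c_g=\tilde c_g\circ q$, and in particular $c_g$ maps the fiber $q^{-1}(1,\dots,1)$ to itself. (iii) The Dirac distribution is conjugation-invariant: because $\tilde c_g$ fixes $(1,\dots,1)$ and preserves Haar measure on $G^r$, one has $\delta_r\circ\tilde c_g=\delta_r$ (equivalently, each factor $\delta$ is a class function). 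With these in hand the computation is a single change of variables: starting from the definition of $\mu_q$ and using $p(T_g U)=c_g(p(U))$, I substitute $x=c_g(y)$, so that (i) leaves the measure unchanged, (ii) turns the argument of $\delta_r$ into $\tilde c_g(q(y))$, and (iii) lets me drop $\tilde c_g$:
$$\mu_q(T_g U)=\int_{G^k}\delta_r\big(\tilde c_g(q(y))\big)\,\mathbf 1_{p(U)}(y)\,dy=\int_{G^k}\delta_r(q(y))\,\mathbf 1_{p(U)}(y)\,dy=\mu_q(U).$$

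The one genuine obstacle is rigor: $\delta_r$ is a distribution, not a function, and $\mathbf 1_{p(U)}$ is not smooth, so the change of variables above is formal as written. I would make it precise using the regularity hypothesis on $f_q$. Choose a conjugation-invariant approximate identity $\phi_\epsilon$ on $G$ (obtained, if necessary, by averaging any smooth bump at $1$ over the conjugation action, which preserves positivity and total mass), set $\Phi_\epsilon(w)=\prod_{j}\phi_\epsilon(w_j)$, and define the honest finite measures $\mu_q^\epsilon(U)=\int_{G^k}\Phi_\epsilon(q(x))\,\mathbf 1_{p(U)}(x)\,dx$. Regularity of $f_q$ at $(1,\dots,1)$ guarantees $\mu_q^\epsilon\to\mu_q$ when tested against continuous functions. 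For each fixed $\epsilon$ the integrand is continuous, so the change of variables is legitimate, and since $\phi_\epsilon$ is a class function we get $\Phi_\epsilon\circ\tilde c_g=\Phi_\epsilon$ and hence $\mu_q^\epsilon(T_g U)=\mu_q^\epsilon(U)$, exactly as above. Passing to the limit yields $\mu_q(T_g U)=\mu_q(U)$ for every $g\in G$, which is invariance under $\mathrm{Inn}(G)$. As a sanity check, the argument uses only that $\delta$ is $\mathrm{Ad}$-invariant and that conjugation preserves Haar measure; both fail for a general (outer) automorphism of $G$, consistent with the statement being restricted to inner automorphisms.
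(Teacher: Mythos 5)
Your proof is correct and follows essentially the same route as the paper: the paper's argument is exactly the change of variables $x\mapsto g^{-1}xg$ combined with the conjugation-invariance of the Dirac distribution $\delta_r$ and of the Haar measure. The only difference is that you add a mollification step (approximating $\delta_r$ by conjugation-invariant approximate identities and using the regularity of $f_q$ to pass to the limit) to justify the distributional change of variables, which the paper takes for granted; this is a welcome refinement, not a different approach.
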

\begin{proof}
Let $g$ be an element of $G$, and $V$ be a borelian of the representation space $\Hom (\Pi, G)$. The Dirac distribution $\delta$ is invariant by conjugation, and so is the distribution $\delta_r$. The Haar measure $dx$ is also invariant by conjugation as a left and right invariant measure. Hence, we have
\begin{align*}
\mu_q( g \cdot V ) & = \int_{G^k} \delta_r (q ( x) ) {\bf 1}_{p(g\cdot V)} ( x ) dx_1 \cdots dx_k \\
& = \int_{G^k} \delta_r (g^{-1} q ( x) g ) {\bf 1}_{g p(V) g^{-1}} ( x ) dx_1 \cdots dx_k \\
& = \int_{G^k} \delta_r ( q ( g^{-1}x g )) {\bf 1}_{ p(V) } (g^{-1} x g) dx_1 \cdots dx_k \\
& = \mu_q (V)
\end{align*}
\end{proof}

An automorphism of $\Pi = \langle a_1 , \dots , a_k | q_1 , \dots , q_r \rangle$ is given by $k$ words in $a_1 , \dots , a_k$, and its inverse is of the same form. Hence, we have the immediate corollary to Proposition \ref{prop:inv}
\begin{cor}\label{cor:aut}
Let $q$ be a presentation of $\Pi$ such that the distribution $f_q$ is regular at $(1, \dots , 1) \in G^r$. The measure $\mu_q$ on $\Hom (\Pi , G) $ is invariant under the action of ${\rm Aut} (\Pi)$.
\end{cor}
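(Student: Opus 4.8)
The plan is to deduce the statement directly from Proposition \ref{prop:inv} by observing that an automorphism of $\Pi$ merely replaces the given presentation $q$ by another presentation of the same group, and that the induced map on $\Hom(\Pi,G)$ is realized, through $p$, by a word-automorphism of $G^k$ of exactly the type covered by Lemma \ref{lem:auto}. Fix $\tau \in \mathrm{Aut}(\Pi)$ and write $\tau(a_i) = w_i(a_1,\dots,a_k)$ and $\tau^{-1}(a_i) = v_i(a_1,\dots,a_k)$ as words in the generators. The hypothesis that both $\tau$ and $\tau^{-1}$ are given by words is precisely what makes the associated maps $w,v : G^k \to G^k$ mutually inverse analytic automorphisms given by words, so Lemma \ref{lem:auto} applies to $v$ and the product Haar measure $dx_1\cdots dx_k$ is $v$-invariant (up to sign, hence invariant as a positive measure).

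The first step is to express the action in coordinates. For $\rho \in \Hom(\Pi,G)$ one has $(\tau\cdot\rho)(a_i) = \rho(\tau^{-1}(a_i)) = v_i(\rho(a_1),\dots,\rho(a_k))$, so that $p(\tau\cdot\rho) = v(p(\rho))$ and hence $p(\tau\cdot V) = v(p(V))$ for every borelian $V$. The second step is to build the competing presentation: set $s_j = q_j(v_1,\dots,v_k)$, i.e. $s = q\circ v$ as a presentation map $G^k \to G^r$. I would then check that $s = \langle a_1,\dots,a_k \mid s_1,\dots,s_r\rangle$ is again a presentation of $\Pi$. Lifting $\tau$ to the automorphism $\tilde\tau$ of the free group $F_k$ sending $a_i \mapsto w_i$, the relator $s_j$ equals $\tilde\tau^{-1}(q_j)$, so the normal closure of $\{s_j\}$ is the image under $\tilde\tau^{-1}$ of the normal closure of $\{q_j\}$; since $\tilde\tau^{-1}$ preserves that normal subgroup, the two presentations define the same quotient $\Pi$. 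Moreover the change of variables $y = v(x)$ in the integral defining $f_s$ shows $f_s = f_q$ as distributions, so $f_s$ is regular at $(1,\dots,1)$ and Proposition \ref{prop:inv} (in its equal-generators, equal-relators case) yields $\mu_s = \mu_q$.

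Finally I would tie the two computations together. Since $s$ and $q$ share the generators $a_1,\dots,a_k$, the evaluation map attached to $s$ is again $p$, so that
$$\mu_s(V) = \int_{G^k} \delta_r(q(v(x)))\, {\bf 1}_{p(V)}(x)\, dx_1\cdots dx_k.$$
On the other hand, using $p(\tau\cdot V) = v(p(V))$ and the substitution $x = v(y)$ (whose Jacobian is trivial by Lemma \ref{lem:auto}),
\begin{align*}
\mu_q(\tau\cdot V) &= \int_{G^k} \delta_r(q(x))\,{\bf 1}_{v(p(V))}(x)\, dx_1\cdots dx_k \\
&= \int_{G^k}\delta_r(q(v(y)))\,{\bf 1}_{p(V)}(y)\, dy_1\cdots dy_k = \mu_s(V).
\end{align*}
Combining with $\mu_s = \mu_q$ gives $\mu_q(\tau\cdot V) = \mu_q(V)$, which is the desired $\mathrm{Aut}(\Pi)$-invariance. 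The main obstacle is not a single hard computation but the bookkeeping of the second step: one must verify carefully that substituting $\tau^{-1}$ into the relators genuinely yields a presentation of $\Pi$ (and not a proper quotient), and that the distributional change of variables $x\mapsto v(x)$ is legitimate, i.e. that $\delta_r$ is transported correctly under the analytic automorphism $v$. This is exactly where the regularity hypothesis on $f_q$ and the sign-invariance of the volume form from Lemma \ref{lem:auto} are used.
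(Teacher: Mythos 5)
Your proof is correct and follows exactly the route the paper intends: the paper dismisses the corollary as "immediate" from Proposition \ref{prop:inv} because an automorphism and its inverse are given by words in the generators, and your argument is precisely the detailed unwinding of that remark --- transporting the action through $p$ to the word-automorphism $v$ of $G^k$, invoking Lemma \ref{lem:auto} for the change of variables, and applying the equal-generators case of Proposition \ref{prop:inv} to the presentation $s=q\circ v$. The only content you add beyond the paper is the (correct) verification that $s$ presents the same group and that $f_s=f_q$ is regular, which the paper leaves implicit.
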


\subsection{Regularity of volume distributions for non-orientable surface groups in $\SU (2)$}\
For a closed non-orientable surface $M$ of genus $k$, we take the usual presentation:
$$ \pi_1 (M) = \langle a_1 , \dots , a_k | a_1^2 \cdots a_k^2 \rangle. $$
The volume distribution $f_k$ becomes
$$f_k (w) = \int_{G^k} \delta_r (x_1^2 \cdots x_k^2 \cdot w^{-1}) dx_1 \cdots dx_k .$$
To show that this distribution is regular at the identity element, we compute its character expansion. We first have to set some notations.

Let $\widehat{G}$ denote the set of isomorphism classes of complex irreducible representations of $G$ and let $\chi_{\lambda}$ be the character of the irreducible representation $\lambda \in \widehat{G}$. Using the Frobenius-Schur indicator of irreducible characters (see \cite{frobenius}), we decompose $\widehat{G}$ into the disjoint union of the three following subsets:
\begin{eqnarray*}
\widehat{G}_1 & = & \left\{ \lambda \in \widehat{G} \, \bigg| \, \frac{1}{|G|} \int_G \chi_{\lambda} (w^2) dw = 1 \right\}, \\ 
\widehat{G}_2 & = & \left\{ \lambda \in \widehat{G} \, \bigg| \, \frac{1}{|G|} \int_G \chi_{\lambda} (w^2) dw = 0 \right\}, \\ 
\widehat{G}_4 & = & \left\{ \lambda \in \widehat{G} \, \bigg| \, \frac{1}{|G|} \int_G \chi_{\lambda} (w^2) dw = -1 \right\} .
\end{eqnarray*}

With these notations we state the following proposition, whose detailed proof can be found in \cite{mulase}.
\begin{proposition}\label{prop:character}
The character expansion of the volume distribution $f_k$ is given by
\begin{equation}\label{eqn:exp}
f_k (w) = \sum_{\lambda \in \widehat{G}_1} \left( \frac{ |G| }{\dim \lambda} \right)^{k-1} \chi_{\lambda} (w) - \sum_{\lambda \in \widehat{G}_4} \left( - \frac{ |G| }{\dim \lambda} \right)^{k-1} \chi_{\lambda} (w) 
\end{equation}

If the right-hand side sum is absolutely convergent for $w = 1$, then it is uniformly and absolutely convergent on $G$, and the volume distribution $f_k$ is a $C^{\infty}$ function.

When $G = \SU (2)$, the series associated to $f_k (1)$ are absolutely convergent for $k\ge 4$.
\end{proposition}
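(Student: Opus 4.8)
The plan is to obtain the expansion \eqref{eqn:exp} by harmonic analysis on the compact group $G$, read off convergence from the size of the characters, and then specialise to $\SU(2)$, where the whole question collapses to a $p$-series. For the expansion I would expand the Dirac distribution in the Peter--Weyl basis, $\delta=\frac{1}{|G|}\sum_{\lambda\in\widehat G}(\dim\lambda)\chi_\lambda$, and substitute it into $f_k(w)=\int_{G^k}\delta(x_1^2\cdots x_k^2 w^{-1})\,dx$. The only genuinely algebraic ingredient is the effect of a single squared factor: the operator $T_\lambda=\int_G \pi_\lambda(x)^2\,dx$ commutes with $\pi_\lambda(G)$, so by Schur's lemma $T_\lambda=c_\lambda\,\mathrm{Id}$ with $c_\lambda=\frac{1}{\dim\lambda}\int_G\chi_\lambda(x^2)\,dx=\frac{|G|}{\dim\lambda}\varepsilon_\lambda$, where $\varepsilon_\lambda\in\{1,0,-1\}$ is precisely the Frobenius--Schur indicator defining $\widehat G_1,\widehat G_2,\widehat G_4$. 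Integrating the $k$ factors one at a time (using that $\chi_\lambda$ is a class function to cycle $w^{-1}$ into place) yields a factor $\frac{|G|\varepsilon_\lambda}{\dim\lambda}$ each time; the type-$2$ terms vanish since $\varepsilon_\lambda=0$, and for the surviving self-dual $\lambda$ one has $\chi_\lambda(w^{-1})=\chi_\lambda(w)$. Collecting the sign $\varepsilon_\lambda^{\,k}$ and separating $\widehat G_1$ ($\varepsilon=1$) from $\widehat G_4$ ($\varepsilon=-1$) reproduces \eqref{eqn:exp}; this is the Mulase--Penkava computation.

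For the convergence clause I would use that each $\pi_\lambda(w)$ is unitary, so $|\chi_\lambda(w)|\le\chi_\lambda(1)=\dim\lambda$ for every $w\in G$. Hence the absolute value of the $\lambda$-term of \eqref{eqn:exp} is bounded, uniformly in $w$, by its value at $w=1$, namely $(|G|/\dim\lambda)^{k-1}\dim\lambda$. By the Weierstrass $M$-test, absolute convergence at $w=1$ forces absolute and uniform convergence on all of $G$, so the sum defines a continuous function, and the promotion to smoothness near the identity is the finer Fourier-analytic point established in \cite{mulase}.

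For the $\SU(2)$ statement, $\widehat{\SU(2)}=\{V_n\}_{n\ge1}$ with $\dim V_n=n$, each $V_n$ self-dual, and the Frobenius--Schur indicator is $+1$ for $n$ odd (orthogonal type) and $-1$ for $n$ even (symplectic type, since $V_n=\mathrm{Sym}^{n-1}(\mathbb{C}^2)$ and the defining representation is symplectic); there are no complex-type representations, so $\widehat G_2=\emptyset$, $\widehat G_1=\{n\text{ odd}\}$ and $\widehat G_4=\{n\text{ even}\}$. At $w=1$ the magnitude of the $n$-th term is $(|G|/n)^{k-1}n=|G|^{k-1}n^{2-k}$, so absolute convergence of $f_k(1)$ is equivalent to convergence of $\sum_{n\ge1}n^{2-k}$, which holds exactly when $k-2>1$, that is $k\ge4$.

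The main obstacle I expect is not the $p$-series but the structural inputs feeding it: pinning down the Frobenius--Schur indicators for $\SU(2)$ correctly — in particular that the even-dimensional irreducibles are symplectic, which is what produces the alternating sign through $\widehat G_4$ — and making the regularity assertion honest. Bare absolute convergence at $w=1$ only yields a continuous $f_k$, so the genuine local smoothness at the identity must be extracted from the convolution structure of $f_k$ rather than from term-by-term differentiation, and this is the step where I would defer to \cite{mulase}.
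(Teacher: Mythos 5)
Your proposal is correct and follows essentially the same route as the paper: both are the Mulase--Penkava character expansion, with your application of Schur's lemma to $\int_G \pi_\lambda(x^2)\,dx$ being exactly the content of the paper's convolution identity $\chi_\lambda \ast \chi_\mu = \frac{|G|}{\dim\lambda}\delta_{\lambda\mu}\chi_\lambda$ applied to the factorization $f_k = f_1 \ast \cdots \ast f_1$. Your sign bookkeeping $\varepsilon_\lambda^{\,k} = -(-1)^{k-1}$ on $\widehat{G}_4$, the bound $|\chi_\lambda(w)|\le\dim\lambda$ for the uniform convergence, the identification of the even-dimensional $\SU(2)$-irreducibles as symplectic, and the reduction to $\sum n^{2-k}$ all match the paper, which likewise defers the full $C^\infty$ regularity argument to the reference.
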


\begin{proof}The proof of Proposition \ref{prop:character} relies on the convolution property of the $\delta$-function. Namely, let $q_k$ be the word in $a_1 , \dots , a_k$ given by $q_k (a_1 , \dots , a_k) = a_1^2 \cdots a_k^2$. We have that $$\displaystyle{\delta (q_{g+h} w^{-1} )= \int_G \delta( q_h w^{-1} u^{-1}) \delta ( u q_g) du   }$$
and hence $$\displaystyle{ f_k =  \stackrel{k-{\rm times}}{ \overbrace{f_1 \ast \cdots \ast f_1}} }.$$
Here $f \ast g$ denotes the usual convolution product, given by 
$$(f \ast g) (x) = \int_G f(x w^{-1}) g(w) dw .$$

The irreducible characters are real analytic functions on $G$ and form a orthonormal basis for the $L^2$ class function on $G$. The character expansion in terms of irreducible characters of the class distribution $f_1$ is given by:
$$f_1 (w) = \sum_{\lambda \in \widehat{G}} Z_{\lambda} \chi_{\lambda} (w)$$
where
$$Z_{\lambda} = \frac{1}{|G|} \int_G \chi_{\lambda} (x^2) dx.$$

Hence using the decomposition $\widehat{G}= \widehat{G}_1 \sqcup \widehat{G}_2  \sqcup \widehat{G}_4$ given by the Frobenius-Schur indicator, we obtain
$$f_1 (w) = \sum_{\lambda \in \widehat{G}_1} \chi_{\lambda} (w) - \sum_{\lambda \in \widehat{G}_4} \chi_{\lambda} (w)$$

The convolution property of irreducible characters states that 
$$\chi_\lambda \ast \chi_\mu = \frac{|G|}{\dim \lambda} \delta_{\lambda \mu} \chi_{\lambda}.$$
This formula applied $k-1$ times to the convolution $f_k = f_1 \ast \cdots \ast f_1$ gives us the formula (\ref{eqn:exp}).

Moreover, for $G = \SU (2)$, we know that the dimension of the representation in $\widehat{G}_1$ consists of odd integers and in $\widehat{G}_4$ of even integers. Hence 
$$f_k (1) = |G|^{k-1} \left( \sum_{n=1}^{\infty} (2n - 1)^{2-k} + (-1)^{2-k} \sum_{n=1}^{\infty} (2n)^{2-k} \right)$$
which is absolutely convergent for $k\geq 4$.
\end{proof}

%For the rest of this paper, $G$ always denotes the group $\SU (2)$.
Let $M$ be a closed non-orientable surface of genus $k \geq 4$. We have defined a measure $\mu$ on $\Hom(\pi, \SU (2))$. Consider the quotient map
\begin{equation*}
Q : \Hom (\pi , \SU(2) )  \longrightarrow \Hom (\pi , \SU(2)) / \SU(2) =  \X(M)
\end{equation*}
and define a measure $\nu$ on $\X(M)$ as the push-forward measure of the measure $\mu$ through $Q$, given by:
\begin{equation}
\nu (V) = \mu (Q^{-1} (V) )
\end{equation}
Then Corollary \ref{cor:aut} together with Proposition \ref{prop:inn} show that the measure $\nu$ is $\mbox{Out} (\Pi)$-invariant on the quotient $\Hom (\Pi , \SU (2) ) / \SU(2)$. Moreover, this measure is independent on the choice of the presentation of $\pi$. The ergodicity result of Theorem \ref{closed} will be proved with respect to this measure.

\begin{remark} The fundamental group of a surface $M$ with boundary is a free group on $l = 1 - \chi(M)$ generators. The representation space $\Hom (\pi , G)$ is isomorphic to $G^l$ with the natural presentation of the free group with $l$ generators and no relation. Hence, the measure obtained by the above construction is simply the Haar measure on $G^l$.
\end{remark}

\section{Surfaces decompositions and Goldman's flow}\label{section:flot}

\subsection{Goldman's flow}

Let $f :  G \rightarrow \R$ be a $C^1$ function invariant under inner automorphisms of $G$, namely satisfying $f ( PAP^{-1}) = f(A)$, for all $A,P \in G$. For the rest of this paper $G$ will denote the group $\SU (2)$ and we will consider henceforth $f (A) = \cos^{-1} \left( \frac{\tr (A)}{2} \right) \in [ 0 , \pi ]$, where $\tr$ denotes the usual trace in $\SU(2)$.
%The construction explained below may work for any invariant function on a general Lie group

Let $\mathfrak{g}$ denote the Lie algebra of $G$ and let $\langle X , Y \rangle$ denote the inner product on $\mathfrak{g}$ (i.e. the Killing form) defined by $< X , Y > = - \Tr (XY)$, for all $X,Y$ in $\mathfrak{g}$. The {\it variation} of the function $f$ is the $G$-equivariant function $F : G \rightarrow \mathfrak{g}$ 
 defined by the equation:
$$<X , F(A)> = df_A (X) = \frac{d}{dt} f (A \exp (tX) ) , \hspace{1cm} \mbox{for any } A \in G, \mbox{ and } X \in \mathfrak{g}.$$

If $A$ is a matrix of the form $\displaystyle{ \left( \begin{array}{cc} e^{i \theta} & 0 \\ 0 & e^{-i \theta} \end{array} \right) }$ with $\theta \in ] 0 , \pi [ $, then $ \displaystyle{F(A) = \left( \begin{array}{cc} -i & 0 \\ 0 & i \end{array} \right) }$. For $B = g A g^{-1}$ with $g \in G$ and $A$ of the above form, we have 
\begin{equation}\label{variation} F(B) = F(gAg^{-1})= g \left( \begin{array}{cc} -i & 0 \\ 0 & i \end{array} \right) g^{-1} .\end{equation}
The function $F$ is defined on $G \setminus \{ \pm I \}$, but is not defined for the extremal values $\theta \in \{ 0 , \pi \}$. However, for our purpose it will be sufficient to define the flow on an open dense subset of full-measure of the character variety. 

Let $M$ be a compact non-orientable surface and $\gamma$ a two-sided circle on $M$. Let $f_{\gamma} :  \Hom (\pi_1(M) , G ) / G  \longrightarrow \R $ be the function defined on $\X (M) $ by:
$$ f_{\gamma} ([ \rho ]) =   f ( \rho (\gamma)).$$
We define the open dense subset $\widetilde{\mathcal{S}_{\gamma}}$ of full measure of $\Hom(\pi , G)$ as:
 $$\widetilde{\mathcal{S}_{\gamma}} = \{ \rho \in \Hom (\pi , G) | \rho(\gamma) \neq \pm I \}.$$
Let $\mathcal{S}_{\gamma}$ denote its image in $\X(M)$. For $\rho \in \widetilde{\mathcal{S}_{\gamma}}$, let $\zeta_t (\rho) = \exp ( t F(\rho (\gamma)))$. This defines a path in the centralizer $Z(\rho(\gamma))$ of $\rho (\gamma)$ in $G$.

We construct a flow on $\mathcal{S}_{\gamma}$ called a {\it generalized twist flow} or {\it Goldman flow} (see \cite{jefwei,klein}). We define the flow $\Xi_t (\rho)$ in the two cases of interest for us, namely when $\gamma$ is a separating circle, and when $\gamma$ is a non-separating circle such that $M|\gamma$ is orientable. The other situation,  corresponding to a non-separating circle such that $M|\gamma$ is non-orientable, will not be used in the sequel but can be treated in the same way than the case when $M | \gamma$ is orientable.\\

\subsection{The flow associated to a separating circle}

Let $\gamma$ be a separating circle on $M$. Then $M|\gamma$ is the disjoint union of two subsurfaces $A$ and $B$. Without loss of generality, we can assume that $A$ is non-orientable. We place a base point $p$ on the circle $\gamma$. The surface $M$ is obtained by gluing $A$ and $B$ along the circle $\gamma$. Hence, the Seifert-Van Kampen theorem shows that the fundamental group $\pi_1 (M)$ can be reconstructed from $\pi_1 (A)$ and $\pi_1 (B)$ as
$$ \pi_1 (M, p) = \pi_1 (A,p ) \ast_{\pi_1 (\gamma , p )} \pi_1 (B,p) .$$
The fundamental group $\pi_1 (\gamma, p)$ is isomorphic to the cyclic group $\mathbb{Z}$. We also denote by $\gamma$ the class of the curve $\gamma$ in $\pi_1 (M)$. Hence we have $\pi_1 (\gamma) = \langle \gamma \rangle$.

The flow on $\widetilde{\mathcal{S}_{\gamma}}$ is defined by:
\begin{equation}\label{flot1}
\widetilde{\Xi}_t \rho (\delta ) = \left\{ \begin{array}{ll} \rho (\delta ), & \mbox{ if } \delta  \in \pi_1 (A), \\ \zeta_t (\rho) \rho (\delta ) \zeta_t (\rho)^{-1}, & \mbox{ if } \delta  \in \pi_1 (B) \end{array} \right.
\end{equation}
where $\rho$ is an element of $\Hom (\pi_1(M) , G)$, and $t$ is a real number.
The element $\zeta_t (\rho)$ is in the centralizer of $\rho (\gamma)$, hence $\rho (\gamma) = \zeta_t (\rho) \rho (\gamma) \zeta_t (\rho)^{-1}$, and the element $\widetilde{\Xi}_t \rho (\gamma)$ is well-defined.  

We define the flow  $\{ \Xi_t \}_{t\in \R}$ on $\mathcal{S}_{\gamma}$, such that it is covered by $\{ \widetilde{\Xi}_t \}_{t \in \R}$. For any representation $\rho$ in $\widetilde{\mathcal{S}_{\gamma}}$, the formula (\ref{variation}) gives 
$$\zeta_{\pi} (\rho) = \exp ( \pi F(\rho (\gamma))) = \exp \left( \pi g \left( \begin{array}{cc} -i & 0 \\ 0 & i \end{array} \right) g^{-1} \right) = -I.$$ Similarly $\zeta_{2\pi }(\rho) = I$. So we have $\widetilde{\Xi}_{\pi} \rho = \rho$, and thus the flow $\{ \Xi_t \}$ is $\pi$-periodic and defines a circle action on an open dense subset of full measure of $\X (M)$.\\

We then define $\X (M; A,B,\gamma) $ as the pull-back in the diagram:
$$ \begin{array}{clc}
\X (\gamma) & \longleftarrow & \X(A) \\
\uparrow & & \uparrow \\
\X(B) & \longleftarrow & \X (M; A,B,\gamma) 
\end{array} $$
Namely, $\X (M; A,B,\gamma) $ is the set of pairs $ ( [ \alpha ] , [\beta ] ) \in \X(A) \times \X(B) $ such that
$$[\alpha _{\mid_{\pi_1 (\gamma)} } ] = [\beta _{\mid_{\pi_1 (\gamma)} } ] \in \X(\gamma) $$
We have a natural map $j : \X (M)  \longrightarrow \X(M; A,B,\gamma) $ given by:
 $$j([ \rho ]) = ([\rho _{\mid _{\pi_1 (A)}} ] , [ \rho _{\mid _{\pi_1 (B)}} ]).$$

\begin{proposition} The generic fibers of the map $$j \, : \, \X(M) \longrightarrow \X (M;A,B,\gamma)$$ are the orbits of the circle action $\{ \Xi_t \}$. \end{proposition}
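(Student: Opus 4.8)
The plan is to prove the two inclusions separately: every orbit of $\{\Xi_t\}$ lies in a single fiber of $j$, and conversely a generic fiber is contained in a single orbit. The first inclusion is formal and holds on all of $\mathcal{S}_{\gamma}$: for $\rho \in \widetilde{\mathcal{S}_{\gamma}}$ the flow $\widetilde{\Xi}_t$ fixes $\rho$ on $\pi_1(A)$ and conjugates it by $\zeta_t(\rho)$ on $\pi_1(B)$, so $[(\widetilde{\Xi}_t \rho)_{\mid \pi_1(A)}] = [\rho_{\mid \pi_1(A)}]$ and $[(\widetilde{\Xi}_t \rho)_{\mid \pi_1(B)}] = [\rho_{\mid \pi_1(B)}]$; hence $j \circ \Xi_t = j$ and each orbit maps to a single point of $\X(M;A,B,\gamma)$.

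For the reverse inclusion, I would fix $[\rho] \in \mathcal{S}_{\gamma}$ and take $[\rho'] \in \X(M)$ with $j([\rho']) = j([\rho])$, that is, $[\rho'_{\mid \pi_1(A)}] = [\rho_{\mid \pi_1(A)}]$ and $[\rho'_{\mid \pi_1(B)}] = [\rho_{\mid \pi_1(B)}]$. Since conjugating $\rho'$ over all of $\pi_1(M)$ does not alter $[\rho']$, the equality of the $A$-classes lets me replace $\rho'$ by a conjugate with $\rho'_{\mid \pi_1(A)} = \rho_{\mid \pi_1(A)}$ on the nose. The $B$-classes still agree, so there is $g \in G$ with $\rho'_{\mid \pi_1(B)} = g\, \rho_{\mid \pi_1(B)}\, g^{-1}$. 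Because $\gamma \in \pi_1(A) \cap \pi_1(B)$, evaluating on $\gamma$ gives $\rho'(\gamma) = \rho(\gamma)$ from the $A$-side and $\rho'(\gamma) = g\, \rho(\gamma)\, g^{-1}$ from the $B$-side, whence $g \in Z(\rho(\gamma))$.

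The key step is then to identify this centralizer. For $[\rho] \in \mathcal{S}_{\gamma}$ we have $\rho(\gamma) \neq \pm I$, so $\rho(\gamma)$ is a regular element of $\SU(2)$ and $Z(\rho(\gamma))$ is the unique maximal torus containing it. By the explicit description of $F$ in (\ref{variation}), $F(\rho(\gamma))$ is a nonzero generator of the Lie algebra of this torus, so the one-parameter group $t \mapsto \zeta_t(\rho) = \exp(t F(\rho(\gamma)))$ surjects onto $Z(\rho(\gamma))$. Thus $g = \zeta_s(\rho)$ for some $s \in \R$, and since $\pi_1(M) = \pi_1(A) \ast_{\pi_1(\gamma)} \pi_1(B)$ is generated by $\pi_1(A)$ and $\pi_1(B)$, comparing $\rho'$ with $\widetilde{\Xi}_s \rho$ on each of the two subgroups yields $\rho' = \widetilde{\Xi}_s \rho$; hence $[\rho'] = \Xi_s([\rho])$ lies in the orbit.

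The one point that genuinely requires genericity — and where a naive argument collapses — is exactly this centralizer computation: when $\rho(\gamma) = \pm I$ the centralizer is all of $\SU(2)$, the fiber becomes strictly larger than a single orbit, and the identification fails. Restricting to the full-measure open set $\mathcal{S}_{\gamma}$ on which $\rho(\gamma) \neq \pm I$ is precisely what reduces $Z(\rho(\gamma))$ to the circle $\{\zeta_t(\rho)\}$ and forces fibers to coincide with orbits. I expect the remaining care to lie only in checking that the reduction to $\rho'_{\mid \pi_1(A)} = \rho_{\mid \pi_1(A)}$ and the final comparison descend correctly to the $G$-quotient, which is routine.
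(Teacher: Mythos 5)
Your proposal is correct and follows essentially the same route as the paper: after normalizing representatives so that the restrictions to $\pi_1(A)$ agree, the fiber of $j$ is identified with the centralizer $Z(\rho(\gamma))$, which for $\rho(\gamma)\neq\pm I$ is the maximal torus swept out by $t\mapsto\zeta_t(\rho)$, and genericity enters exactly where you say it does. Your write-up is in fact slightly more complete, since it also spells out the easy inclusion $j\circ\Xi_t=j$, which the paper leaves implicit.
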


\begin{proof}
Let $( [ \alpha ] , [\beta ] )$ be an element of $\X (M; A,B,\gamma)$ and let $\alpha$ and  $\beta$ be representatives of $[\alpha]$ and $[\beta]$ respectively. By definition, there exists an element $g$ of $G$ such that $\alpha_{\mid_{\pi_1 (\gamma)} } = g \cdot \beta_{\mid_{\pi_1 (\gamma) }} \cdot g^{-1}$. Without loss of generality, we can choose a representative $\beta$ such that $\alpha_{\mid_{\pi_1 (\gamma)} } = \beta_{\mid_{\pi_1 (\gamma) }}$. Let $[\rho]$ be a conjugacy class of representation in $\X (M) $ and let $\rho$ be a representative of $[\rho]$. The class $[\rho]$ is in the fiber $j^{-1}( [ \alpha ] , [\beta ] )$ if and only if there exists $h_1, h_2$ in $G$ such that $\rho _{\mid _{\pi_1 (A)}} = h_1 \alpha h_1^{-1}$ and $\rho _{\mid _{\pi_1 (B)}} = h_2 \beta h_2^{-1}$. Without loss of generality, we can choose a representative $\rho$ of $[ \rho ]$ such that $h_1 = 1$. Then we obtain $ \alpha(\gamma) = h_2 \beta (\gamma) h_2^{-1} =h_2  \alpha (\gamma) h_2^{-1}$. It follows that $h_2$ is in the centralizer $Z(\alpha (\gamma))$ of $\alpha(\gamma)$. Henceforth the fiber is identified with the centralizer $Z(\alpha (\gamma))$.

If $\alpha(\gamma) \neq \pm Id$ then $Z(\alpha (\gamma) )$ is a maximal torus in $SU(2)$ which acts simply transitively on itself by left multiplication. Therefore, we identify the maximal torus $Z(\alpha(\gamma))$ with the space $\{ \zeta_t (\rho ) | t \in [ 0 , 2 \pi ] \}$ when $\alpha(\gamma) \neq \pm I$. Finally, for a generic element of $\X (M; A,B,\gamma)$, we have $\alpha(\gamma) \neq \pm Id$, which ends the proof of the proposition.

\end{proof}

\subsubsection*{Relation with the measure}
We can choose a presentation of $M$ as $$\pi_1 (M) = \langle A_1 , \dots , A_k , B_1 , \dots B_l | q_k (A_1 , \dots , A_k) ( q_l (B_1 , \dots , B_l) )^{-1} \rangle $$ where $q_n$ is the word defined for any $n$ in $\mathbb{N}$ by $ q_n (x_1 , \dots , x_n) = x_1^2 \cdots x_n^2$. The fundamental group of $A$ and $B$ are given by;
\begin{align*}
\pi_1 (A) &= \langle A_1 , \dots , A_k , C | q_k (A_1 , \dots , A_k) C^{-1} \rangle\\
\pi_1 (B) &= \langle B_1 , \dots , B_l , C | q_l (B_1 , \dots , B_l) C^{-1} \rangle.
\end{align*}

The circle $\gamma$ on $M$ is represented by $q (A_1 , \dots , A_k)$. In this setting, the flow on $\Hom (\pi , G)$ acts by left and right multiplication on the $k$ first generators, and hence is measure preserving. 

Moreover, we can see the fundamental group of the surface with boundary $A$ as a free group with generators $A_1 , \dots , A_k$. The restriction map $\X(M) \longrightarrow \X(A)$ is defined by the image of these generators. Hence, the measure on $\X(A)$ defined as the push-forward measure through this restriction, is in the class of the Haar measure on $G^k / G$. And the same result holds with the restriction $\X(M) \longrightarrow \X (B)$.

Finally the decomposition measure $\nu_{[\alpha],[\beta]}$, with respect to the map $j$, on the fiber $j^{-1}  ([\alpha],[\beta])$ is the Haar measure on the maximal torus $Z ( \alpha (\gamma))$, which is a circle. Thus the Haar measure is in the Lebesgue class on $\mathbb{S}^1$. 

\subsubsection*{The action of the Dehn twist}
With the identification $ \pi_1 (M) = \pi_1 (A ) \ast_{\pi_1 (\gamma)} \pi_1 (B)$, the Dehn twist $\tau_{\gamma}$ about the curve $\gamma$ acts on an element $\rho$ in $\Hom (\pi_1(M) , G)$ as
$$ (\tau_{\gamma} \cdot \rho )( \delta ) = \left\{ \begin{array}{ll}  \rho (\delta) , & \mbox{ if } \delta \in \pi_1 (A), \\ \rho(\gamma) \cdot \rho(\delta) \cdot \rho (\gamma)^{-1}, & \mbox{ if } \delta \in \pi_1 (B) \end{array} \right. $$
For any $g \in G$ we have $g = \exp (f (g) \cdot F (g) )$, so we obtain 
$$\rho (\gamma) = \exp (f(\rho(\gamma)) \cdot F (\rho (\gamma))) = \zeta_{f(\rho(\gamma))} (\rho).$$ 
Therefore, the Dehn twist on $\X(M)$ can be expressed in terms of the Goldman flow :
\begin{equation}\label{dehn1}
\tau_{\gamma} = \Xi_{f ( \rho ( \gamma))}.
\end{equation}
The Goldman flow is a $\pi$-periodic circle action on a generic fiber of the application $j$, which is homeomorphic to a circle. So the twist $\tau_{\gamma}$  acts on the fiber $j^{-1}([ \alpha ] , [\beta ] )$ of the application $j : \X (M) \rightarrow \X (M ; A, B , \gamma)$ as the rotation of angle $2 f(\rho ( \gamma))$ on this circle with $[\rho] \in j^{-1}([ \alpha ] , [\beta ] )$.

\subsection{The flow associated to a non-separating circle}

Let $M = N_{2g + 2 , m}$ be the orientable surface of genus $g$ with $m$ boundary components, with two crosscaps attached. The circle $\gamma$ is a two-sided curve passing through the two crosscaps (see Figure \ref{fig:non-sep}). The surface $A = M|\gamma$ is an orientable surface of genus $g$ with $m+2$ boundary components. The two additional boundary components that correspond to the two sides of $\gamma$ are denoted $\gamma_+$ and $\gamma_-$. Recall that crosscaps are drawn as shaded disks.

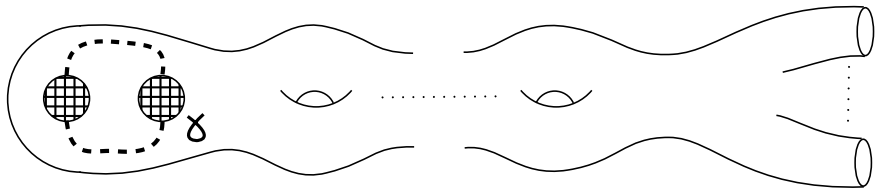
\begin{figure}[ht]\label{fig:non-sep}
\begin{center}
\scalebox{0.7} 
{
\begin{pspicture}(0,-1.805)(16.46,1.805)
\psarc[linewidth=0.03](1.39,-0.04){1.39}{90.0}{270.0}
\pscircle[linewidth=0.03,dimen=outer,fillstyle=crosshatch*,hatchwidth=0.04,hatchangle=0,hatchsep=0.14](1.12,-0.03){0.45}
\pscircle[linewidth=0.03,dimen=outer,fillstyle=crosshatch*,hatchwidth=0.04,hatchangle=0,hatchsep=0.14](2.92,-0.03){0.45}
\psarc[linewidth=0.03](5.83,-0.28){0.39}{25.0}{155.0}
\psarc[linewidth=0.03](5.86,0.69){0.88}{-140.0}{-40.0}
\psarc[linewidth=0.03](10.39,-0.28){0.39}{25.0}{155.0}
\psarc[linewidth=0.03](10.42,0.69){0.88}{-140.0}{-40.0}
\psbezier[linewidth=0.03](1.36,1.35)(2.3140507,1.4310776)(2.8705063,1.2170984)(3.8443038,0.9235492)(4.8181014,0.63)(5.1546054,1.5401132)(6.0502534,1.334518)(6.945901,1.1289228)(6.8653164,0.8461295)(7.7,0.83)
\psbezier[linewidth=0.03](8.66,0.85)(9.36,0.83)(9.660341,1.496122)(10.64,1.33)(11.619658,1.1638781)(11.743482,0.7533701)(12.64,0.81)(13.536519,0.8666299)(14.24,1.79)(16.26,1.71)
\psbezier[linewidth=0.03](1.36,-1.43)(2.32,-1.55)(2.92,-1.31)(3.86,-1.05)(4.8,-0.79)(5.14,-1.67)(6.0502534,-1.445482)(6.9605064,-1.220964)(6.9053164,-0.9338705)(7.72,-0.95)
\psbezier[linewidth=0.03](8.68,-0.97)(9.36,-0.89)(9.680341,-1.5438781)(10.64,-1.39)(11.599659,-1.236122)(11.703482,-0.7666299)(12.56,-0.77)(13.416518,-0.7733701)(14.0,-1.79)(16.26,-1.71)
\psline[linewidth=0.04cm,linestyle=dotted,dotsep=0.16cm](7.1,-0.01)(9.28,0.01)
\psellipse[linewidth=0.03,dimen=outer](16.29,1.24)(0.17,0.47)
\psellipse[linewidth=0.03,dimen=outer](16.25,-1.25)(0.17,0.46)
\psbezier[linewidth=0.03](16.28,0.77)(15.8,0.83)(15.26,0.59)(14.72,0.47)
\psbezier[linewidth=0.03](16.22,-0.79)(15.36,-0.75)(14.98,-0.41)(14.6,-0.35)
\psline[linewidth=0.04cm,linestyle=dotted,dotsep=0.16cm](15.98,0.59)(15.96,-0.47)
\psbezier[linewidth=0.08,linestyle=dashed,dash=0.16cm 0.16cm](1.12,0.41)(1.14,0.71)(1.16,1.11)(1.98,1.05)(2.8,0.99)(3.02,0.95)(2.94,0.43)
\psbezier[linewidth=0.08,linestyle=dashed,dash=0.16cm 0.16cm](2.94,-0.47)(2.94,-1.07)(2.3955636,-1.0640916)(2.02,-1.03)(1.6444365,-0.9959085)(1.2,-1.23)(1.12,-0.45)
\psbezier[linewidth=0.06](3.42,-0.37)(4.0,-0.81)(3.6,-0.83)(3.6,-0.83)(3.6,-0.83)(3.16,-0.85)(3.72,-0.33)
\end{pspicture} 
}
\end{center}
\caption{A non orientable surface of even genus}
\end{figure} 

The surface $M$ is obtained from $A$ by gluing the two boundary components $\gamma_+ , \gamma_-$ with an orientation-reversing homeomorphism. So $\pi_1 (M)$ can be constructed from $\pi_1 (A)$ by an HNN construction. The group $\pi_1 (M)$ is the quotient of the free product of $\pi_1(A)$ with a cyclic group $\langle \beta \rangle \cong \mathbb{Z}$, by the normal subgroup generated by the set
$$ N = \left\{ i_- (\tau) \cdot \beta \cdot i_+ (\tau) \cdot \beta^{-1} \, \mid \, \tau \in \pi_1(\gamma) \right\}, $$
here $i_{\pm}$ are the embeddings induced by inclusion $\gamma \hookrightarrow \gamma_{\pm} \hookrightarrow M $.
$$ i_{\pm} \, : \, \pi_1(\gamma) \longrightarrow \pi_1(\gamma_{\pm} ) \stackrel{i_{\gamma}}{\longrightarrow}  \pi_1 (M). $$
Namely, we obtain
$$\pi_1 (M)  = (\pi_1 (A) \ast \langle \beta \rangle ) \, \slash \, N ,$$ 

The new generator $\beta$ corresponds to a one-sided circle on $M$ which crosses $\gamma$ exactly once. \\

The flow on $\widetilde{\mathcal{S}_{\gamma}}$ is defined by:
\begin{equation}\label{flot2}
\widetilde{\Xi}_t \rho  (\delta)  = \left\{ \begin{array}{ll} \rho (\delta ), & \mbox{ if } \delta  \in \pi_1 (A), \\ \zeta_t (\rho) \rho (\delta ), & \mbox{ if } \delta  \in \pi_1 (\beta) . \end{array} \right. ,
\end{equation}
where $\rho$ is an element of $\Hom (\pi_1(M) , G)$, and $t$ is a real number.

We define a flow  $\{ \Xi_t \}_{t\in \R}$ on $\mathcal{S}_{\gamma}$, that is covered by $\{ \widetilde{\Xi}_t \}_{t \in \R}$.  For any representation $\rho$ in $\widetilde{\mathcal{S}_{\gamma}}$, the formula (\ref{variation}) gives $\zeta_{2\pi }(\rho) = I$. So we have $\widetilde{\Xi}_{2 \pi} \rho = \rho$, and thus the flow $\{ \Xi_t \}$ is $2 \pi$-periodic and defines a circle action on an open dense subset of full measure of $\X (M)$.

We have a natural map $\phi \, : \, \X(M) \longrightarrow \X (A)$ given by
$$\phi ([\rho] ) = [\rho _{\mid _{\pi_1 (A)}} ]$$

\begin{proposition} The generic fibers of the map $\phi$ are the orbits of the circle action $\{ \Xi_t \}$. \end{proposition}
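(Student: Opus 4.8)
The plan is to follow the template of the separating case: show that every flow orbit is contained in a single fiber of $\phi$, and then show conversely that over a generic $[\alpha] \in \X(A)$ the flow sweeps out the entire fiber. The inclusion of orbits in fibers is immediate, since $\widetilde{\Xi}_t \rho$ agrees with $\rho$ on $\pi_1(A)$ by the very definition (\ref{flot2}); hence $\phi(\Xi_t [\rho]) = \phi([\rho])$ for all $t$. All of the content lies in the reverse, transitivity statement.

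To set up the fiber, I would fix a generic $[\alpha]$ in the image of $\phi$ and, using the freedom to conjugate, choose a representative $\rho$ of a class in $\phi^{-1}([\alpha])$ with $\rho_{\mid \pi_1(A)} = \alpha$. By the HNN presentation $\pi_1(M) = (\pi_1(A)\ast\langle\beta\rangle)/N$, such a representation is determined by the single element $b := \rho(\beta) \in G$, subject to the relation coming from $N$ evaluated on the generator $\gamma$ of $\pi_1(\gamma)$, namely $\alpha(i_-(\gamma))\, b\, \alpha(i_+(\gamma))\, b^{-1} = 1$. Writing $u = \alpha(i_+(\gamma))$ and $v = \alpha(i_-(\gamma))$, this reads $b u b^{-1} = v^{-1}$, i.e. $b$ conjugates $u$ to $v^{-1}$. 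For $[\alpha]$ in the image of $\phi$ these two elements are conjugate in $\SU(2)$, and if moreover $\rho(\gamma) \neq \pm I$ (which holds on the full-measure set $\mathcal{S}_{\gamma}$), the set of admissible $b$ is a single coset $b_0\, Z(u)$ of the centralizer $Z(u)$, a maximal torus, hence a circle.

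Next I would pass back to conjugacy classes. For generic $\alpha$ the representation of $\pi_1(A)$ is irreducible, so its stabilizer under conjugation is the center $\{\pm I\} \subset \SU(2)$, which acts trivially; consequently two representatives normalized as above with distinct values of $b$ are never conjugate, and $\phi^{-1}([\alpha])$ is in bijection with the circle $b_0\, Z(u)$. It then remains to identify this circle with one flow orbit. The flow acts by $b \mapsto \zeta_t(\rho)\, b$ with $\zeta_t(\rho) = \exp(t\, F(\rho(\gamma)))$; since $\rho(\gamma) \neq \pm I$, the one-parameter subgroup $\{\zeta_t\}_{t}$ fills the entire maximal torus $Z(\rho(\gamma))$ and is $2\pi$-periodic (as already noted after (\ref{flot2})). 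A short computation, using that $b_0$ conjugates $Z(\rho(\gamma))$ onto $Z(u)$, shows that left multiplication by $\{\zeta_t\}_{t}$ acts simply transitively on the coset $b_0\, Z(u)$. Hence the orbit of $[\rho]$ is exactly the fiber $\phi^{-1}([\alpha])$.

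The step I expect to be the main obstacle is the bookkeeping forced by the orientation-reversing gluing: because the relation defining $N$ carries an inverse, the element $\rho(\gamma)$ whose centralizer governs the flow and the boundary element $u$ whose centralizer parametrizes the fiber lie in \emph{different} (though conjugate) maximal tori, related precisely by $b_0$ via $b_0^{-1}\, v\, b_0 = u^{-1}$. Verifying that this conjugation correctly matches the two tori --- so that the flow simultaneously preserves the relation (stays inside the fiber) and acts transitively --- is the delicate point. Everything else reduces to the genericity argument, namely irreducibility of $\alpha$ together with $\rho(\gamma) \neq \pm I$ on a dense open set of full measure, exactly as in the separating case.
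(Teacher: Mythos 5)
Your proof is correct and follows essentially the same route as the paper: identify the fiber over a generic $[\alpha]$ with the coset $b_0\,Z(u)$ of a maximal torus via the HNN relation $b\,u\,b^{-1}=v^{-1}$, and observe that the one-parameter subgroup $\{\zeta_t\}$ acts simply transitively on it by left multiplication. You supply a few details the paper leaves implicit (the orbit-in-fiber inclusion, irreducibility of $\alpha$ when passing to conjugacy classes, and the explicit matching of the two maximal tori through $b_0$), but the argument is the same.
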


\begin{proof}
We denote by $\beta_0$ the preimage of $\beta$ in $A$, which is an arc with one endpoint on $\gamma_+$ and one endpoint on $\gamma_-$. Let $x_0$ be the endpoint of $\beta_0$ on $\gamma_-$. For convenience we also denote by $\gamma_-$ the corresponding element of $\pi_1 ( A ,  x_0)$. Let $\gamma_+$ be the element of $\pi_1(A,x_0)$ corresponding to the loop $\beta_0^{-1} \star \widetilde{\gamma}_+ \star \beta_0$ where $\widetilde{\gamma}_+$ is the loop following $\gamma_+$ based on the endpoint of $\beta_0$ on $\gamma_+$. A representation $\rho_A : \pi_1(A) \rightarrow G$ extends to a representation of $\pi_1(M)$ if and only if there exists $b \in G$ such that $\rho_A (\gamma_-)^{-1}$ is conjugate to $ \rho_A (\gamma_+) $, i.e. 
\begin{equation}\label{recol}
 \rho_A (\gamma_-) ^{-1} = b \rho_A (\gamma_+) b^{-1}.
\end{equation}

The choice of the element $b$ corresponds to the choice of the image of the new generator $\beta$. Two elements of $G = \SU (2)$ are conjugate if and only if they have the same trace, and an element of $G$ and its inverse have the same trace. We infer that 
$$\phi ( \X (M)) = \{ [ \rho_A ] \in \X (A) \mid \tr ( \rho_A ( \gamma_-)) = \tr (\rho_A (\gamma_+)) \} .$$
Let $[\rho_A]$ be an element of $\phi ( \X (M))$, and $\rho_A \in \Hom (\pi , G)$ a representative. Let $g$ be the element of $G$ such that $ \rho_A (\gamma_-) = g \rho_A (\gamma_+) g^{-1}$. Then the fiber $\phi^{-1} ([ \rho_A ] )$ is identified with the set of those $b \in G$ satisfying (\ref{recol}). Thus we can see that 
$$\phi^{-1} ( [ \rho_A ] ) = \left\{ b \cdot g \mid b \in Z (\rho_A (\gamma_- )) \right\} . $$
So the fiber $\phi^{-1} ([ \rho_A ] )$ is a right-coset of the centralizer $Z (\rho_A (\gamma_-))$ of $\rho_A (\gamma_- )$. If $\rho_A (\gamma_-) \neq \pm Id$ then $Z(\rho_A (\gamma_-) )$ is a maximal torus in $SU(2)$ which acts transitively on the fiber by left multiplication. Therefore, we identify the maximal torus $Z(\rho_A(\gamma_-))$ with the space $\{ \zeta_t (\rho_A (\gamma_-) ) | t \in [ 0 , 2 \pi ] \}$ when $\rho_A(\gamma_-) \neq \pm I$. Finally, the set of all $[\rho_A] \in  \phi ( \X (M))$ such that $\rho_A (\gamma) \neq \pm Id$ is an open dense subset of full measure of $\phi ( \X (M))$, which ends the proof of the proposition.
\end{proof}

\subsubsection*{Relation with the measure}
We can choose a presentation of $\pi_1 (M)$ as
$$\pi_1 (M) = \langle A_1 , \dots , A_k , \gamma , \beta | q_k (A_1, \dots , A_k) \beta \gamma \beta^{-1} \gamma \rangle $$
such that the elements $A_1 , \dots , A_k,  \gamma $ generates $\pi_1 (A)$ as a free group with $k+1$ generators. In this setting, the flow on $\Hom (\pi , G)$ acts on a representation $\rho$ by left multiplication on $\rho (\gamma)$ and let $\rho (A_1) , \dots ,\rho (A_k)$ invariant. Hence the flow is measure preserving. 

Moreover, the map $\phi : \X(M) \longrightarrow \X(A)$ is defined by the image of the first $k+1$ generators of $\pi_1 (M)$. Hence the push-forward of the measure $\nu$ on $\X(A)$ equals the quotient of Haar measure on $G^{k+1} / G$. Finally, the decomposition measure $\nu_{[\rho_A]}$, with respect to the map $\phi$, on the fiber $\phi^{-1}  ([\rho_A])$ is the Haar measure on the maximal torus $Z ( \rho_A (\gamma))$, and hence is in the Lebesgue class.

\subsubsection*{The action of the Dehn twist}
With the identification of $\pi_1 (M)  = (\pi_1 (A) \ast \langle \beta \rangle ) \, \slash \, N $, the Dehn twist $\tau_{\gamma}$ about the curve $\gamma$ acts on $\Hom (\pi_1(M) , G)$ as
$$ (\tau_{\gamma} \cdot \rho) (\delta) = \left\{ \begin{array}{ll}  \rho (\delta), & \mbox{ if } \delta \in \pi_1 (A), \\ \rho(\gamma) \cdot \rho(\delta), & \mbox{ if } \delta \in \langle \beta \rangle \end{array} . \right. $$
For any $\rho \in \Hom (\pi , G)$, we have $\rho (\gamma) = \exp (f(\rho(\gamma)) \cdot F (\rho (\gamma))) = \zeta_{f(\rho(\gamma))} (\rho)$. Then as in the previous case we express the Dehn twist in the form
\begin{equation}\label{dehn2}
\tau_{\gamma} = \Xi_{f ( \rho ( \gamma))}.
\end{equation}
The Goldman flow is a $2 \pi$-periodic circle action on a generic fiber which is a circle. So the twist $\tau_{\gamma}$ acts on a generic fiber $\phi^{-1} ( [ \rho_A ] )$ of the application $\phi : \X (M) \rightarrow \X (A)$ as the rotation of angle $f(\rho_A ( \gamma))=f(\rho (\gamma))$ on this circle, with $\rho \in \phi^{-1} ( [ \rho_A ] )$.\\

\begin{remark}
When the surface $M$ is oriented and compact, the flow defined by (\ref{flot1}) or (\ref{flot2}) covers the flow of the Hamiltonian vector field on $\X(M)$ associated to the function $f_{\alpha}$ with respect to the natural symplectic structure on the space $\X (M)$ (see \cite{Goldman86}). 
\end{remark}

\section{Surfaces of even genus}\label{section:pair}

In this section, we prove theorem $\ref{thm:open}$ in the case of a non-orientable surface of even genus. Let $M$ be the non-orientable surface $N_{2g+2 , m} $ and let $X$ be a non-separating curve such that the surface $A = M|X$ is orientable, as the curve $\gamma$ in Figure \ref{fig:non-sep}.

\subsection{Action of the Dehn twist about $X$}

According to (\ref{dehn2}) the Dehn twist $\tau_X$ about the curve $X$ acts on a generic fiber of the application $\phi : \X (M) \rightarrow \X (A)$ as the rotation of angle $f ( \rho (X))$. Let $\X _{\mathbb{Q}} (M)$ be the set of representations $[\rho]$ in $\X(M)$ such that $f(\rho(X))$ is a rational multiple of $\pi$. Then $\X_{\mathbb{Q}}(M)$ has zero measure. Specifically, we have
$$ \X_{\mathbb{Q}}(M) = \bigcup_{q \in \mathbb{Q}} \tr_X^{-1} (2 \cos (q \pi)).$$
where $\tr_X$ is the function $\tr_X ([\rho])  \mapsto \tr (\rho (X))$. The trace function is a non-constant algebraic function on $\X(M)$ which is an irreducible algebraic variety. Hence the set $\X_{\mathbb{Q}}(M)$ is a countable union of lower-dimensional subvarieties (which have zero measure). So on  the full-measure subset $\X'(M)$ defined as $\X (M) \setminus \X_{\mathbb{Q}}(M)$, the angle $f(\rho (X))$ is irrational. A rotation by an irrational angle on the circle is ergodic with respect to its Lebesgue measure. So we have a measurable map $\phi : \X(M) \longrightarrow \X(A)$ such that $\phi$ is $\tau_X$-invariant. Moreover the action of $\tau_X$ on the fiber $\phi^{-1}([\alpha])$ is  ergodic, with respect to the decomposition measure $\nu_{[\rho_A]}$, for almost all $[\alpha] \in \X(A)$.

We recall the following classical result of measure theory and refer to (\cite{furst} Theorem 5.8) for a proof:

\begin{lemma}\label{ergodic} {\bf of ergodic decomposition:}\\
Let $(X, \mathfrak{B} , \mu ) $ a measured space, $Y , Z $  Borel spaces, and $ F : X \longrightarrow Y$ a measurable map. Suppose that $\Gamma$ is a group of automorphisms of $(X, \mathfrak{B} , \mu ) $ such that $F$ is $\Gamma$-invariant. Let $\mu_y$ be the measures on $F^{-1}(y)$ obtained by disintegrating $\mu$ over $F$. Let $h : X \longrightarrow Z$ be a measurable $\Gamma$-invariant function.\

Suppose that the action of $\Gamma$ is ergodic on the fiber $(F^{-1}(y) , \mu_y )$ for almost all $y \in Y$.\

Then there exists a measurable function $H : Y \longrightarrow Z $ such that $h = H \circ F $ almost everywhere.
\end{lemma}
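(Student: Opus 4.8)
The plan is to reduce everything to the elementary principle that a $\Gamma$-invariant measurable function on an ergodic system is essentially constant, and then to apply this principle fiber by fiber. First I would isolate that core principle in the following form: if $\Gamma$ acts ergodically on a measure space $(W, \lambda)$ and $g : W \longrightarrow Z$ is a $\Gamma$-invariant measurable map into a Borel space, then $g$ is $\lambda$-almost everywhere equal to a single constant. To prove this, fix a countable family $\{ B_n \}$ of Borel subsets of $Z$ that separates points and generates the Borel $\sigma$-algebra (such a family exists since $Z$ is a Borel space). Each preimage $g^{-1}(B_n)$ is $\Gamma$-invariant, hence by ergodicity is either $\lambda$-null or $\lambda$-conull. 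After discarding the countable union of the relevant null sets, almost every $w \in W$ has $g(w)$ lying in the same prescribed "pattern" of the $B_n$; because the $B_n$ separate points, this pattern determines at most one point $z_0 \in Z$, and therefore $g = z_0$ almost everywhere.

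Next I would transport this to the fibers. Since $F$ is $\Gamma$-invariant, each $\gamma \in \Gamma$ carries every fiber $F^{-1}(y)$ into itself, so $\Gamma$ acts on $(F^{-1}(y), \mu_y)$, and by hypothesis this action is ergodic for almost every $y \in Y$. The restriction $h|_{F^{-1}(y)}$ inherits $\Gamma$-invariance from $h$. Applying the core principle to $(F^{-1}(y), \mu_y)$ and the invariant map $h|_{F^{-1}(y)}$, I obtain for almost every $y$ a value $H(y) \in Z$ such that $h = H(y)$ holds $\mu_y$-almost everywhere on $F^{-1}(y)$; on the exceptional null set of parameters $y$ I set $H$ equal to a fixed base point. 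This defines a candidate function $H : Y \longrightarrow Z$, determined on each fiber only up to a $\mu_y$-null set.

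It remains to verify that $H$ is genuinely measurable and that $h = H \circ F$ holds $\mu$-almost everywhere, and this is exactly where the real work lies. For measurability I would invoke the measurable structure of the disintegration: for each $n$ the map $y \mapsto \mu_y\bigl(h^{-1}(B_n)\bigr)$ is measurable, and (outside the exceptional null set) one has $H(y) \in B_n$ precisely when this quantity is positive, so each $H^{-1}(B_n)$ is measurable; as the $B_n$ generate the Borel $\sigma$-algebra of $Z$, the function $H$ is measurable. Finally, writing the disintegration formula $\mu = \int_Y \mu_y \, d(F_* \mu)(y)$, the exceptional set $\{ x : h(x) \neq H(F(x)) \}$ has $\mu_y$-measure zero for almost every $y$, whence it is $\mu$-null, giving $h = H \circ F$ almost everywhere. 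I expect the measurability of $H$ to be the main obstacle: the values $H(y)$ are defined only modulo null sets on each fiber, and the content of the argument is precisely that the measurability of $y \mapsto \mu_y(E)$ supplied by the disintegration theorem, combined with the countable separating family on the Borel space $Z$, is exactly what patches these fiberwise constants into a bona fide measurable function on $Y$.
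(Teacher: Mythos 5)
The paper does not actually prove this lemma: it is quoted as a classical fact with a pointer to Theorem 5.8 of \cite{furst}, so there is no in-text argument to compare yours against. Your proposal is a correct, essentially self-contained proof of that standard fact, and it follows the route one would expect: first the principle that a $\Gamma$-invariant measurable map from an ergodic system into a Borel space is almost everywhere constant, proved via a countable point-separating generating family $\{B_n\}$ and the null/conull dichotomy; then the fiberwise application; then the patching of the fiberwise constants into a measurable $H$ using the measurability of $y \mapsto \mu_y\bigl(h^{-1}(B_n)\bigr)$ supplied by the disintegration theorem. You correctly identify the measurability of $H$ as the only genuinely delicate step and you resolve it properly. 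Two small points are worth making explicit if you write this up: the criterion ``$H(y)\in B_n$ if and only if $\mu_y\bigl(h^{-1}(B_n)\bigr)>0$'' requires the conditional measures $\mu_y$ to be probability measures (or at least nonzero) on their fibers, which the disintegration theorem does provide for $F_*\mu$-almost every $y$, with the remaining $y$ absorbed into your exceptional null set; and the existence of a countable separating family generating the Borel $\sigma$-algebra uses that $Z$ is a standard Borel space, which is the intended reading of ``Borel space'' here. Neither point affects the correctness of the argument.
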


Let $h : \X(M) \longrightarrow \mathbb{R}$ be a $\tau_X$-invariant measurable function. By the Lemma of ergodic decomposition, there exists a function $H : \X (A) \longrightarrow \mathbb{R}$ such that $h = H \circ \phi$ almost everywhere. So a $\Gamma_M$-invariant function is almost everywhere equal to a function depending only on $\X (A)$.

\subsection{The ergodicity of the mapping class group action on $\X(A)$ }

The surface $A$ is an orientable surface with boundary. Let $g_A$ be an element of the mapping class group $\Gamma_A$ of $A$. The mapping class $g_A$ is an element of $\mbox{Out} (\pi_1 (A))$. Using the identity $\pi_1 (M) = \pi_1 (A) \ast \langle \tau \rangle / N$, we define an element $g$ of the mapping class group $\Gamma_M$ of $M$. First, let $\widetilde{g}$ be an element acting on the free product $\pi_1 (M) = \pi_1 (A) \ast \langle \tau \rangle$ such that the restriction of $\widetilde{g}$ on $\pi_1 (A)$ equals $g_A$, and $\widetilde{g}$ acts identically on $\langle \tau \rangle$. The element $\widetilde{g}$ leaves $X_-$ and $X_+$ invariants, and hence we can define an element $g$ on the quotient $\pi_1 (M) = \pi_1 (A) \ast \langle \tau \rangle / N$. This construction embeds $\Gamma_A$ as a subgroup of $\Gamma_M$.

We recall that in \cite{go1} Goldman showed that for the natural symplectic measure on $\X (A)$, a measurable function $f : \X (A) \longrightarrow \mathbb{R}$ that is $\Gamma_A$-invariant is almost everywhere equal to a function depending only on the traces of the boundary components $\{ X_+ , X_- , C_1 , \dots , C_m \} $. In our case, an element $[\rho_A]$ in $\phi (\X(M))$ satisfies $\tr (\rho_A( X_+)) = \tr (\rho_A (X_-))$. For a representation $[\rho]$ in $\X (M)$ we denote by $x, c_1 , \dots, c_m $ the traces of the elements $\rho (X)  , \rho(C_1) , \dots , \rho (C_m)$ respectively. So we infer from previous results, the following:

\begin{proposition}\label{inter}
Let $f : \X(M) \rightarrow \R$ be a $\Gamma_M$-invariant function. There exists a function $G \, : \, [-2,2]^{m+1} \rightarrow \mathbb{R}$ such that $f ([\rho]) = G(x,c_1,...,c_m)$ almost everywhere. 
\end{proposition}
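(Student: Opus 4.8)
The plan is to chain the two ergodicity inputs already at hand — the ergodic decomposition along the Goldman flow of $X$, and Goldman's theorem for the orientable piece $A$ — using the compatibility of $\phi$ with the embedding $\Gamma_A \hookrightarrow \Gamma_M$.

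First I would take $f : \X(M) \to \R$ to be $\Gamma_M$-invariant. Since the Dehn twist $\tau_X$ lies in $\Gamma_M$, the function $f$ is in particular $\tau_X$-invariant. By the discussion preceding the proposition — the action of $\tau_X$ on a generic fiber of $\phi$ is, on the full-measure set $\X'(M)$, an irrational rotation, and hence ergodic — Lemma \ref{ergodic} applied with $F = \phi$ yields a measurable function $H : \X(A) \to \R$ with $f = H \circ \phi$ almost everywhere.

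Next I would descend the remaining symmetry to $H$. For $g_A \in \Gamma_A$, let $g \in \Gamma_M$ be its image under the embedding $\Gamma_A \hookrightarrow \Gamma_M$ constructed above. Since $g$ restricts to $g_A$ on $\pi_1(A)$ and $\phi$ is precisely restriction to $\pi_1(A)$, one checks the equivariance $\phi \circ g = g_A \circ \phi$. Combining this with $f = H \circ \phi$ and the $\Gamma_M$-invariance of $f$ gives $H \circ \phi = H \circ g_A \circ \phi$ almost everywhere. Because the push-forward $\phi_* \nu$ lies in the Haar (equivalently symplectic) measure class on $\phi(\X(M))$ — as recorded in the preceding \emph{Relation with the measure} paragraph — this forces $H = H \circ g_A$ almost everywhere on $\X(A)$, so that $H$ is $\Gamma_A$-invariant.

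Finally I would invoke Goldman's theorem. As $A$ is orientable with boundary $\{ X_+ , X_- , C_1 , \dots , C_m \}$, the $\Gamma_A$-invariant function $H$ is almost everywhere equal to a function of the boundary traces $(\tr(X_+), \tr(X_-), \tr(C_1), \dots, \tr(C_m))$. But every $[\rho_A] \in \phi(\X(M))$ satisfies $\tr(\rho_A(X_+)) = \tr(\rho_A(X_-)) = x$, so this dependence collapses to the $m+1$ variables $(x, c_1, \dots, c_m)$, producing the desired $G : [-2,2]^{m+1} \to \R$ with $f = H \circ \phi = G(x, c_1, \dots, c_m)$ almost everywhere. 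The only delicate point is the measure-class bookkeeping in the descent step: one must verify that the almost-everywhere identities transfer correctly between $\nu$ on $\X(M)$ and $\phi_* \nu$ on $\X(A)$, and that $\phi_* \nu$ agrees with the class of Goldman's symplectic measure so that his theorem applies. Both facts were established earlier, so the argument is essentially a careful composition of already-proved statements rather than a new calculation.
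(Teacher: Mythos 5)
Your proof follows the paper's argument for Proposition \ref{inter} essentially verbatim: first the ergodic decomposition (Lemma \ref{ergodic}) applied to the irrational rotation action of $\tau_X$ on the fibers of $\phi$, then the embedding $\Gamma_A \hookrightarrow \Gamma_M$ and Goldman's ergodicity theorem for the orientable surface $A$, with the collapse $\tr(X_+)=\tr(X_-)=x$ on $\phi(\X(M))$. The only difference is that you spell out the equivariance $\phi\circ g = g_A\circ\phi$ and the measure-class bookkeeping in the descent of invariance to $H$, which the paper leaves implicit; this is a correct and welcome clarification, not a change of approach.
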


To conclude the proof of Theorem \ref{thm:open} in the case of a non-orientable surface of even genus, we have to "eliminate" the coordinate $x$. At this point the surface needs to have a sufficiently large mapping class group in order to be able to find a Dehn twist that acts non-trivially on the coordinate $x$ corresponding to the trace of $X$. Thanks to the hypothesis on $\chi(M)$, there is a two-holed Klein bottle $N_{2,2}$ embedded in  $M$, such that $X$ is a non-separating two-sided curve in $N_{2,2}$. Now we study the particular case of $N_{2,2}$.

\subsection{The two-holed Klein Bottle}
\subsubsection{The character variety}

Let $M$ be a two-holed Klein bottle.

%test de pstricks

\begin{figure}[ht]\label{fig:N22}
\begin{center}
\scalebox{0.8} % Change this value to rescale the drawing.
{
\begin{pspicture}(0,-2.936875)(13.844063,2.976875)
\psarc[linewidth=0.04](1.9040625,-0.576875){1.0}{90.0}{270.0}
\psline[linewidth=0.04cm](1.9040625,0.423125)(6.9040623,0.423125)
\psline[linewidth=0.04cm](1.9040625,-1.576875)(6.9040623,-1.576875)
\pscircle[linewidth=0.04,dimen=outer,fillstyle=crosshatch*,hatchwidth=0.04,hatchangle=0,hatchsep=0.14](2.1040626,-0.576875){0.45}
\pscircle[linewidth=0.04,dimen=outer,fillstyle=crosshatch*,hatchwidth=0.04,hatchangle=0,hatchsep=0.14](3.9040625,-0.576875){0.45}
\psellipse[linewidth=0.04,dimen=outer](6.9040623,0.023125)(0.2,0.4)
\psellipse[linewidth=0.04,dimen=outer](6.9040623,-1.176875)(0.2,0.4)
\psbezier[linewidth=0.04](6.9040623,-0.776875)(6.5040627,-0.776875)(6.1440625,-0.596875)(6.0240626,-0.376875)
\pscustom[linewidth=0.04]
{
\newpath
\moveto(6.8840623,-0.356875)
\lineto(6.7640624,-0.386875)
\curveto(6.7040625,-0.401875)(6.5790625,-0.421875)(6.5140624,-0.426875)
\curveto(6.4490623,-0.431875)(6.3590627,-0.461875)(6.3340626,-0.486875)
\curveto(6.3090625,-0.511875)(6.2740626,-0.541875)(6.2440624,-0.556875)
}
\pscircle[linewidth=0.04,dimen=outer](11.404062,-0.536875){2.4}
\pscircle[linewidth=0.04,dimen=outer,fillstyle=crosshatch*,hatchwidth=0.04,hatchangle=0,hatchsep=0.14](10.264063,0.383125){0.45}
\pscircle[linewidth=0.04,dimen=outer,fillstyle=crosshatch*,hatchwidth=0.04,hatchangle=0,hatchsep=0.14](10.6640625,-1.796875){0.45}
\pscircle[linewidth=0.04,dimen=outer](12.564062,0.503125){0.45}
\psbezier[linewidth=0.02](12.224063,-1.176875)(12.104062,-0.456875)(11.544063,-0.076875)(11.844063,0.743125)(12.144062,1.563125)(13.344063,1.223125)(13.184063,0.303125)(13.024062,-0.616875)(12.784062,-0.736875)(12.304063,-1.176875)
\psbezier[linewidth=0.02](12.224063,-1.136875)(11.384063,-0.596875)(11.204062,1.103125)(10.564062,0.763125)
\psbezier[linewidth=0.02](9.844063,0.203125)(8.844063,-0.216875)(10.724063,-0.276875)(12.264063,-1.176875)
\psbezier[linewidth=0.02](12.264063,-1.216875)(10.904062,-0.976875)(9.304063,-0.676875)(10.324062,-1.496875)
\psbezier[linewidth=0.02](10.984062,-2.136875)(11.624063,-2.756875)(12.004063,-2.356875)(12.264063,-1.176875)
\psdots[dotsize=0.16](12.264063,-1.196875)
\psline[linewidth=0.04cm](11.964063,-1.676875)(12.004063,-1.996875)
\psline[linewidth=0.04cm](12.004063,-2.016875)(12.284062,-1.756875)
\psline[linewidth=0.04cm](10.204062,-0.536875)(10.084063,-0.236875)
\psline[linewidth=0.04cm](10.104062,-0.256875)(10.424063,-0.236875)
\psline[linewidth=0.04cm](11.564062,0.603125)(11.884063,0.783125)
\psline[linewidth=0.04cm](11.884063,0.763125)(11.944062,0.463125)
\psline[linewidth=0.04cm](13.444062,-1.296875)(13.484062,-1.656875)
\psline[linewidth=0.04cm](13.484062,-1.656875)(13.824062,-1.476875)
\usefont{T1}{ptm}{m}{n}
\rput(12.430625,-2.046875){A}
\usefont{T1}{ptm}{m}{n}
\rput(9.83125,-0.546875){B}
\usefont{T1}{ptm}{m}{n}
\rput(11.73125,1.133125){C}
\usefont{T1}{ptm}{m}{n}
\rput(13.992657,-1.386875){K}
\end{pspicture} 
}

\end{center}
\caption{Two-holed Klein bottle}
\end{figure}
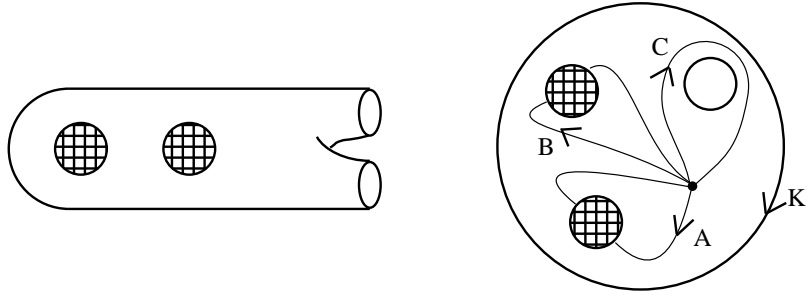

Its fundamental group is as follows
$$\pi = \pi_1(M)=\langle A , B , C , K \mid  A^2 B^2 C K^{-1} \rangle, $$
where $A, B, C, K$ are the curves drawn in Figure \ref{fig:N22}. So $\pi$ is a free group in three generators $A , B , C$. According to Magnus \cite{magnus}, we have trace coordinates on the space  $\X(M) = \Hom (\pi , \SU (2)) / \SU (2)$ given by the seven functions $a,b,c,d,x,y,z$ defined on $\X (M)$ by
$$\begin{array}{cccc} a = \tr (\rho (A)) ;& \,  b = \tr (\rho(B)) ;&\, c = \tr (\rho(C)) ;& \, x = \tr (\rho(AB)) ; \end{array}$$
$$\begin{array}{ccc}y = \tr (\rho(BC)) ;& \, z = \tr (\rho(CA))  ;& \, d = \tr (\rho(ABC)). \end{array} \mbox{       for any } [ \rho ] \in \X(M)$$ 
These seven coordinates satisfy the Fricke relation
\begin{equation}\label{fricke} a^2 +b^2 +c^2 +d^2 +x^2 +y^2 +z^2  - 
\end{equation}
$$((ab+cd)x + (bc+da)y+(ca+bd)z)+xyz+abcd-4 = 0$$

%Prouver que la mesure induite par Lebesgue sur R^7 est bien dans la mï¿½me classe que Haar sur G^3/G

For any element $W\in \pi_1 (M)$ written as a word in $A,B,C,A^{-1},B^{-1},C^{-1}$, it is possible to compute the value of $\tr (\rho (W))$ in function of the seven coordinates using the simple formulas $\tr (P Q^{-1}) = \tr (P) \cdot \tr (Q) - \tr (PQ)$ and $\tr (P Q P^{-1}) = \tr (Q)$. These computations can be done algorithmically on a computer using a recursive program.

The elements $C ,K \in \pi$ correspond to the two boundary components of the surface. The character variety of the boundary of $M$ is
$$\X(\partial M) = \{ (c,k) \in [-2 , 2 ]^2 \}$$
where $k=\Tr (K) = \tr (A^2 B^2 C) = adb-az-by+c$. 

Let $X$ be the non-separating two-sided curve represented by $AB$ in $\pi$. The surface $M|X$ is a four-holed sphere. According to Proposition $\ref{inter}$, a measurable function $f : \X (N_{2,2}) \longrightarrow \mathbb{R}$ that is $\Gamma_M$-invariant, is almost everywhere equal to a function depending only on the coordinates $(x,c,k)$.

\subsubsection{The action of the twist about the curve $BBC$}

The curve $U$ shown in Figure \ref{fig:U} is represented by the element $U=BBC$ of $\pi_1(M)$. It is a simple two-sided curve, so the Dehn twist about $U$ can be defined.
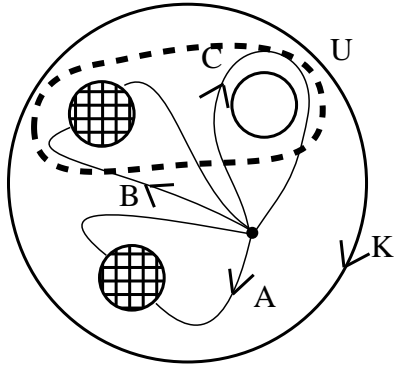
\begin{figure}[ht]\label{fig:U}
\begin{center}
\scalebox{1} % Change this value to rescale the drawing.
{
\begin{pspicture}(0,-2.536875)(5.188125,2.576875)
\pscircle[linewidth=0.04,dimen=outer](2.748125,-0.136875){2.4}
\pscircle[linewidth=0.04,dimen=outer,fillstyle=crosshatch*,hatchwidth=0.04,hatchangle=0,hatchsep=0.14](1.608125,0.783125){0.45}
\pscircle[linewidth=0.04,dimen=outer,fillstyle=crosshatch*,hatchwidth=0.04,hatchangle=0,hatchsep=0.14](2.008125,-1.396875){0.45}
\pscircle[linewidth=0.04,dimen=outer](3.768125,0.903125){0.45}
\psbezier[linewidth=0.02](3.568125,-0.776875)(3.448125,-0.056875)(2.888125,0.323125)(3.188125,1.143125)(3.488125,1.963125)(4.508125,1.643125)(4.348125,0.763125)(4.188125,-0.116875)(4.008125,-0.236875)(3.648125,-0.776875)
\psbezier[linewidth=0.02](3.568125,-0.736875)(2.728125,-0.196875)(2.548125,1.503125)(1.908125,1.163125)
\psbezier[linewidth=0.02](1.188125,0.603125)(0.188125,0.183125)(2.068125,0.123125)(3.608125,-0.776875)
\psbezier[linewidth=0.02](3.608125,-0.816875)(2.248125,-0.576875)(0.648125,-0.276875)(1.668125,-1.096875)
\psbezier[linewidth=0.02](2.328125,-1.736875)(2.968125,-2.356875)(3.348125,-1.956875)(3.608125,-0.776875)
\psdots[dotsize=0.16](3.608125,-0.796875)
\psline[linewidth=0.04cm](3.308125,-1.276875)(3.348125,-1.596875)
\psline[linewidth=0.04cm](3.348125,-1.616875)(3.628125,-1.356875)
\psline[linewidth=0.04cm](2.308125,-0.456875)(2.188125,-0.156875)
\psline[linewidth=0.04cm](2.248125,-0.176875)(2.568125,-0.156875)
\psline[linewidth=0.04cm](2.908125,1.003125)(3.228125,1.183125)
\psline[linewidth=0.04cm](3.228125,1.163125)(3.288125,0.863125)
\psline[linewidth=0.04cm](4.788125,-0.896875)(4.828125,-1.256875)
\psline[linewidth=0.04cm](4.828125,-1.256875)(5.168125,-1.076875)
\usefont{T1}{ptm}{m}{n}
\rput(3.7746875,-1.646875){A}
\usefont{T1}{ptm}{m}{n}
\rput(1.9753125,-0.306875){B}
\usefont{T1}{ptm}{m}{n}
\rput(3.0753126,1.533125){C}
\usefont{T1}{ptm}{m}{n}
\rput(5.3367186,-0.986875){K}
\psbezier[linewidth=0.08,linestyle=dashed,dash=0.16cm 0.16cm](0.708125,0.603125)(0.708125,-0.196875)(1.6516049,0.019771751)(2.648125,0.103125)(3.644645,0.18647824)(4.69813,0.009193997)(4.528125,1.043125)(4.3581195,2.077056)(2.248125,1.583125)(1.868125,1.503125)(1.488125,1.423125)(0.708125,1.403125)(0.708125,0.603125)
\usefont{T1}{ptm}{m}{n}
\rput(4.7953124,1.633125){U}
\end{pspicture} 
}
\end{center}
\caption{The curve $U$}
\end{figure}

The Dehn twist $\tau_U$ about $U$ is given by the automorphism of $\pi_1(M)$
\begin{align*}
A \mapsto & \,\, A \\
B \mapsto & \,\, BBCBC^{-1}B^{-1}B^{-1} \\
C \mapsto & \,\, BBCB^{-1}B^{-1}
\end{align*}
The elements corresponding to $X,Y,Z,D$ and $K$ are transformed by $\tau_U$ as follows
\begin{align*}
X = AB \mapsto & \,\, ABBCBC^{-1}B^{-1}B^{-1} \\
Y = BC \mapsto & \,\, BBCB^{-1}\\
Z = CA \mapsto & \,\, BBCB^{-1}B^{-1}A \\
D = ABC \mapsto & \,\, ABBCB^{-1} \\
K = AABBC \mapsto & \,\, AABBC 
\end{align*}
This automorphism of $\pi_1 (M)$ is a lift of $\tau_U \in \mbox{Out} (\pi_1 (M))$ in $\mbox{Aut} (\pi_1 (M))$. Two lifts differ by an inner automorphism, which leaves invariant the conjugacy class and hence the trace coordinates of an element $[ \rho ] \in \X (M)$.
The transformation leaves invariant the conjugacy class of the elements corresponding to the boundary components, namely $C$ and $K$.

The twist $\tau_U$ induces an action on $\X_{\mathcal{C}} (M)$ which can be seen on the coordinates $(a,b,x,y,z,d) \in [-2,2]^6$ as :
\begin{align*}
a \mapsto & a \\
b \mapsto & b \\
x \mapsto & b^2 xy^2 +b^2 yz-b^3 dy - aby^2 +b^2 cd+c^2 x \\
 & -2bcxy+2bdy-bcz+acy-yz-cd+ab-x \\
y \mapsto & y \\
d \mapsto & b^2 d-bxy-bz+cx+ay-d \\
z \mapsto & b (b^2 d-bxy-bz+cx+ay-d) - bd + z 
\end{align*}

 For non-zero $a,b$ , we can write
\begin{equation}
d = \frac{az+by -c +k}{ab} .
\end{equation}
We replace in $(\ref{fricke})$ the coordinate $d$ with its expression in function of $k$ which is $\tau_U$-invariant. Then the equation  $(\ref{fricke})$ becomes:
\begin{equation}\label{fri2}
x^2 + (by-c)x \left( \frac{z}{b} \right) + \left( \frac{z}{b} \right)^2 + 2 D x +2Ez +F = 0
\end{equation}
with\\
$$D = \dfrac{a^2 b^2 -c^2 +ck+bcy}{-2ab} , E = \dfrac{2 c - 2 k - 2 b y - b^2  c + b^2  k + b^3 y + a^2   b y}{-2ab^2 },$$
$$F = \dfrac{1}{a^2 b^2 }(k^2 +(by-c)^2 -(a^2 -2)(by-c)k + a^2 b^2 (a^2 +b^2 +ck-4) - a^2 ck+a^2 bcy).$$

\begin{remark}
The set of representations $[ \rho ]$ such that $a$ and $b$ are zero, is a null measure subset of $\X (N_{2,2})$. So it suffices to prove ergodicity in the complementary of this subset to have ergodicity on the whole space.
\end{remark}

We make a change of variable $z^{'} = \frac{z}{b}$, and the equation $(\ref{fri2})$ becomes:
\begin{equation}\label{ell}
x^2 + (by-c)x z' + z'^2 + 2 D' x +2E'z +F = 0,
\end{equation}
with
$$D' = D , E' = \frac{(b^2 -2)(by+k-c)+a^2 by}{-2ab} .$$
We denote by $u \in [-2 , 2]$ the trace $\Tr (\rho (U)) = by-c $. When $u \notin \{ -2 , 2\}$, we rewrite (\ref{fri2}) as
$$\frac{2+u}{4} ( (x+z' ) - (x_0 (u) +z'_0 (u) )^2 + \frac{2-u}{4} ( (x-z' ) - (x_0 (u) - z'_0 (u))^2 = R $$
where $x_0$, $z_0$ and $R$ are functions in $a,b,c,y,k$, and therefore are $\tau_U$-invariants. The expression of $R$ is the following:
$$ R = \frac{(b^2 +c^2 +y^2 -bcy-4)((a^2 -2)^2 +u^2 +k^2 -(a^2 -2)uk -4)}{a^2 (4-u^2 )}.$$
For a particular value of $-2 < u < 2 $, the left term of the equation is a quadratic function on $x,z'$ with positive coefficients. The positivity of the right term is given by the following fact concerning representations of the free group in two generators, and we refer to \cite{goldman88,nielsen,magnus} for proofs.
\begin{lemma}\label{frirel}
Let $\mathbb{F}_2$ be the free group in two generators $P$ and $Q$. Let $\mathcal{X} (M)$ be the space of conjugacy classes of representations $\Hom(\mathbb{F}_2 , \SU (2)) / \SU (2)$. Then the trace map 
$$
\begin{array}{ccc}
\mathcal{X} (M) & \longrightarrow & \R^3 \\
\left[ \rho \right] &\longmapsto & \left( \begin{array}{c} \tr (\rho (P)) \\ \tr (\rho (Q)) \\ \tr (\rho (PQ)) \end{array} \right)
\end{array}$$
identifies $\mathcal{X} (M)$ with the set
$$\mathfrak{B} =  \left\{ ( p , q , r) \in [-2 , 2] ^3 \, \big| \, p^2 + q^2 + r^2 - pqr - 4 \leq 0 \right\}$$
\end{lemma}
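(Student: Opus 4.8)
The plan is to use the tautological identification $\Hom(\mathbb{F}_2,\SU(2))\cong \SU(2)\times\SU(2)$ that sends $\rho$ to the pair $(A,B)=(\rho(P),\rho(Q))$, and to establish in three stages that the trace map is a bijection onto $\mathfrak{B}$: the image lands in $\mathfrak{B}$, every point of $\mathfrak{B}$ is attained, and distinct conjugacy classes have distinct trace triples. Throughout I write $p=\tr(\rho(P))$, $q=\tr(\rho(Q))$, $r=\tr(\rho(PQ))$, and I use that a trace of an $\SU(2)$ element always lies in $[-2,2]$ and that an element $A\in\SU(2)$ satisfies the Cayley--Hamilton relation $A^2=(\tr A)A-I$.

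First I would show the image is contained in $\mathfrak{B}$. The key tool is the Fricke commutator identity
\[
\tr([A,B]) = (\tr A)^2+(\tr B)^2+(\tr AB)^2-(\tr A)(\tr B)(\tr AB)-2,
\]
valid for any $A,B\in\SL(2,\mathbb{C})$, which one derives purely formally from Cayley--Hamilton together with the relation $\tr(AB^{-1})=\tr(A)\tr(B)-\tr(AB)$. Since $[A,B]\in\SU(2)$ its trace is at most $2$, and substituting $p,q,r$ gives exactly $p^2+q^2+r^2-pqr-4\le 0$, so $(p,q,r)\in\mathfrak{B}$.

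Next, for surjectivity, given $(p,q,r)\in\mathfrak{B}$ I would take $A$ diagonal with $\tr A=p$ and write a general $B=\begin{pmatrix} a & b \\ -\bar b & \bar a \end{pmatrix}\in\SU(2)$ with $|a|^2+|b|^2=1$. Assuming first $p\ne\pm2$, the two equations $\tr B=q$ and $\tr(AB)=r$ determine both the real and imaginary parts of $a$, and the remaining constraint $|b|^2=1-|a|^2\ge 0$ simplifies algebraically to $(pq-2r)^2\le(4-p^2)(4-q^2)$, which is precisely the defining inequality of $\mathfrak{B}$. The degenerate values $p=\pm2$ are handled directly (or by continuity of the trace map on the compact domain). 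For injectivity I would argue that the trace triple recovers the conjugacy class: two pairs with equal traces can, since $A$ and $A'$ share a trace, be conjugated so that $A=A'$ is diagonal; the equations for $\tr B$ and $\tr AB$ then pin down the diagonal entry $a$ of $B$ exactly, forcing $|b|=|b'|$, and a conjugation by the diagonal maximal torus $Z(A)$ rotates the phase of the off-diagonal entry to match, giving $B'=gBg^{-1}$ with $g$ fixing $A$. The reducible cases $A=\pm I$ are immediate. Since $\mathcal{X}(M)$ and $\mathfrak{B}$ are both compact Hausdorff, this continuous bijection is automatically a homeomorphism.

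The main obstacle is the injectivity step: one must check carefully that the trace coordinates separate conjugacy classes, which requires the explicit diagonal normal form for $A$, the observation that $\tr B$ and $\tr AB$ together recover the full diagonal part of $B$, and the action of the centralizer torus on the off-diagonal phase, together with the separate treatment of the reducible representations where $A=\pm I$. By contrast the inclusion of the image into $\mathfrak{B}$ via the commutator identity is entirely formal, and the surjectivity computation is the same algebraic manipulation read in reverse.
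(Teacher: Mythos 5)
Your proof is correct and complete: the Fricke commutator identity gives the inclusion of the image in $\mathfrak{B}$, the diagonal normal form computation (with the constraint $|b|^2\ge 0$ reducing to $(pq-2r)^2\le(4-p^2)(4-q^2)$, which is exactly $p^2+q^2+r^2-pqr-4\le 0$) gives surjectivity, and the centralizer-torus argument gives injectivity. Note that the paper itself supplies no proof of this lemma --- it simply refers to Goldman, Nielsen and Magnus --- and your argument is essentially the standard one found in those references, so there is nothing to reconcile; the only point worth double-checking in your write-up is that you also treat the reducible/central cases $p=\pm2$ (where the inequality forces $r=\pm q$ and injectivity reduces to conjugacy of $B$ alone), which you do.
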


In the right term $R$ we recognize $(b,c,y)$ and $(a^2 -2,u,k)$  as the characters of representations in $\SU (2)$ of the free groups in two generators $\langle B , C \rangle$ and $\langle AA , BBC \rangle$ respectively, as we have $ BC = Y$ and $AABBC = K $. So according to Lemma \ref{frirel}, we have the following inequalities:
\begin{align}
&(b^2 +c^2 +y^2 -bcy-4) \leq 0  ,\\
&((a^2 -2)^2 +u^2 +k^2 -(a^2 -2)uk-4)  \leq 0  .
\end{align}

Moreover $4 - u^2 > 0$, so $R$ is non-negative. Hence the set of all $(x,z')$ satisfying the equation (\ref{ell}) corresponds to an ellipse. This exhibits $\X_{\mathcal{C}} (M)$ as a family of ellipses $E_{\mathcal{C}} (M) (a,b,y)$ that are parametrized by $(c,k,a,b,y)$. Now we express the action of $\tau_U$ on $(x,z')$ using $(x_0 , z'_0)$, as follows:
$$ \left[ \begin{array}{c} x \\ z' \end{array} \right] \mapsto
 \left[ \begin{array}{c} x_0(u) \\ z'_0(u) \end{array} \right] +
\left[ \begin{array}{cc} u^2-1 & u \\ -u & -1 \end{array} \right] \cdot
\left( \left[ \begin{array}{c} x \\ z' \end{array} \right] -
 \left[ \begin{array}{c} x_0(u) \\ z'_0(u) \end{array} \right] \right)$$
 
 This transformation is a rotation of angle $2 \theta_U = 2 \cos ^{-1} (\tr(\rho (U))/2)$ on the ellipse $E_{\mathcal{C}} (M) (a,b,y)$ for fixed $(c,k,a,b,y)$. For all boundary traces $(c,k)$ and for almost all $(a,b,y)$, the angle $\theta_U$ is an irrational multiple of $\pi$. So for almost all $(a,b,y)$, the action of $\tau_U$ is ergodic on the ellipse $E_{\mathcal{C}} (M) (a,b,y)$.
 
Let $f : \X(M) \rightarrow \R$ be a $\Gamma_M$-invariant measurable function. The function $f$ is $\tau_U$-invariant, and by the Lemma of ergodic decomposition, there exists a function $H \, : \, [-2,2]^5 \longrightarrow \mathbb{R}$ such that $f([\rho]) = H(c,k,a,b,y)$ almost everywhere.
On the other hand, according to Proposition $\ref{inter}$, there exists $G \, : \, [-2 , 2 ]^3 \rightarrow \R$ such that $f([\rho]) = G(c,k,x)$ almost everywhere. Therefore the function $f$ depends only on the traces $(c,k)$ of the boundary components . This ends the proof of the Theorem $\ref{thm:open}$ in the case of a two-holed Klein bottle.

\subsection{Conclusion}

We can now prove the case of a non-orientable surface of even genus. Let $M$ be the surface $N_{g,m}$, with $\chi (M) \leq -2$. We can find an embedding of a two-holed Klein bottle $S = N_{2,2}$ in $M$. Let $X$ be the non-separating two-sided circle on $S$ such that the surface $M|X$ is an orientable surface. The Proposition \ref{inter} states that for any $\Gamma_M$-invariant function $f : \X(M) \rightarrow \mathbb{R}$ there is a function $G \, : \, [-2,2]^{m+1} \rightarrow \mathbb{R}$ such that $f ([\rho]) = G(x,c_1,...,c_m)$ almost everywhere.

The mapping class group $\Gamma_S$ of the two-holed Klein bottle can be seen as a subgroup of $\Gamma_M$. The restriction map $\X (M) \longrightarrow \X (S)$ is $\Gamma_S$-equivariant.  The ergodicity of $\Gamma_S$ on the relative character variety of $S$ proves that the function $f$ is almost everywhere equal to a function that does not depend on the function $x = \tr (\rho X)$. The two arguments combine to prove that a $\Gamma_M$-invariant function is almost everywhere equal to a function depending only on the traces of the boundaries $\mathcal{C} = (c_1 , ... , c_m)$. Hence, the action of $\Gamma_M$ is ergodic on $\X_{\mathcal{C}} (M)$ and this ends the proof of the theorem \ref{thm:open} in the case of a non-orientable surface of even genus.

\section{Non-orientable surfaces of odd genus with Euler characteristic $-2$ }\label{section:-2}

In this section, we study the case where $M$ is a three-holed projective plane $N_{1,3}$ or the one-holed non-orientable surface of genus three $N_{3,1}$. In both cases, the fundamental group is isomorphic to the free group in three generators. Hence, we will use the trace coordinates defined in the previous section.

\subsection{The character variety of $N_{1,3}$}

Let $M$ be the surface $N_{1,3}$, and let $B,C$ and $K$ be its three boundary components.

\begin{figure}[ht]\label{fig:N13}
\begin{center}
\scalebox{1} % Change this value to rescale the drawing.
{
\begin{pspicture}(0,-2.516875)(5.3840623,2.556875)
\definecolor{color812b}{rgb}{0.8,0.8,0.8}
\pscircle[linewidth=0.04,dimen=outer](2.9840624,-0.116875){2.4}
\pscircle[linewidth=0.04,dimen=outer,fillstyle=crosshatch*,hatchwidth=0.04,hatchangle=0,hatchsep=0.14,fillcolor=color812b](1.4440625,-0.056875){0.45}
\pscircle[linewidth=0.04,dimen=outer](3.7440624,-1.196875){0.45}
\pscircle[linewidth=0.04,dimen=outer](3.7640624,0.883125){0.45}
\psbezier[linewidth=0.02](2.6840625,-0.056875)(2.8040626,0.483125)(2.8640625,0.983125)(3.1640625,1.363125)(3.4640625,1.743125)(4.5640626,1.763125)(4.4240627,0.843125)(4.2840624,-0.076875)(3.4040625,0.203125)(2.7640624,-0.056875)
\psbezier[linewidth=0.02](2.7240624,-0.056875)(2.2840624,0.803125)(1.3840625,1.383125)(1.4240625,0.403125)
\psbezier[linewidth=0.02](1.4240625,-0.456875)(1.4640625,-1.516875)(2.3240626,-0.676875)(2.7240624,-0.056875)
\psdots[dotsize=0.16](2.7440624,-0.056875)
\psline[linewidth=0.04cm](2.0440626,-0.576875)(1.9640625,-0.876875)
\psline[linewidth=0.04cm](1.9640625,-0.896875)(2.2240624,-0.856875)
\psline[linewidth=0.04cm](2.7840624,1.063125)(3.0240624,1.183125)
\psline[linewidth=0.04cm](3.0440626,1.183125)(3.0440626,0.923125)
\psline[linewidth=0.04cm](3.9840624,-0.276875)(4.0640626,-0.576875)
\psline[linewidth=0.04cm](4.0240626,-0.536875)(3.7440624,-0.516875)
\psline[linewidth=0.04cm](4.9040623,-1.096875)(4.9440627,-1.456875)
\psline[linewidth=0.04cm](4.9440627,-1.456875)(5.2840624,-1.276875)
\usefont{T1}{ptm}{m}{n}
\rput(2.130625,-1.206875){A}
\usefont{T1}{ptm}{m}{n}
\rput(2.71125,1.273125){B}
\usefont{T1}{ptm}{m}{n}
\rput(4.43125,-0.146875){C}
\usefont{T1}{ptm}{m}{n}
\rput(5.652656,-1.286875){K}
\psbezier[linewidth=0.02](2.7240624,-0.056875)(3.3240626,-0.276875)(4.0040627,-0.176875)(4.3440623,-0.996875)(4.6840625,-1.816875)(3.6840625,-2.276875)(3.1440625,-1.596875)(2.6040626,-0.916875)(2.8640625,-0.636875)(2.7240624,-0.096875)
\end{pspicture} 
}
\end{center}
\caption{The three-holed projective plane}
\end{figure}
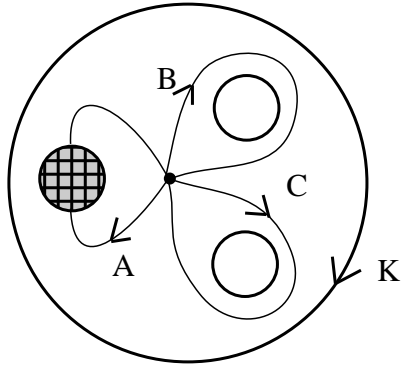
Its fundamental group admits the following presentation
$$\pi = \pi_1(M)=\langle A , B , C , K \mid  A^2 B C K^{-1} \rangle .$$
where $A,B,C,K$ are the curves drawn in Figure \ref{fig:N13}. We see that $\pi$ is a free group on three generators $A,B,C$, so the coordinates on the space $\X (M)$ are given by the seven functions $a,b,c,d,x,y,z$ defined as previously. These seven functions satisfy the Fricke relation (\ref{fricke}). The character of the boundary of $M$ is 
$$\X (\partial M) = (b,c,k)\, ,$$
where $k=\tr(\rho(K)) = \tr (\rho (A^2BC)) = ad-y$. We replace $y$ in the equation (\ref{fricke}) with its expression in function of $a,d$ and $k$. The equation then becomes:
$$a^2+b^2+c^2+d^2+x^2+z^2+k^2 - $$
\begin{equation}\label{star}
((ab+cd)x + (xz-bc-da)k+(ca+bd)z)+adxz-4 = 0 .
\end{equation}
For a fixed character of the boundary $\mathcal{C}=(b,c,k)$, the character variety relative to $\mathcal{C}$ is given by
$$\X_{\mathcal{C}} (M) = \{ (a,x,z,d) \in [-2 , 2]^4 \mid (a,x,z,d) \mbox{ satisfies } (\ref{star}) \} .$$
The complete character variety can be expressed as
$$\X (M) = \bigcup_{-2 \leq b,c,k \leq 2} \X_{\mathcal{C} } (M).$$
The Theorem \ref{thm:open} becomes in this particular case
\begin{proposition}\label{thpp3} For all boundary components $\mathcal{C}=(b,c,k)\in ] -2 , 2 [^3$, the action of $\Gamma_M$ on $\X_{\mathcal{C}} (M)$ is ergodic.
\end{proposition}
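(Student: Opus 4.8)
The plan is to show directly that every $\Gamma_M$-invariant measurable function $f : \X_{\mathcal{C}}(M) \to \R$ is almost everywhere constant; since $b,c,k$ are fixed throughout $\X_{\mathcal{C}}(M)$, this is equivalent to the assertion that $f$ depends (a.e.) only on the three boundary traces $(b,c,k)$. Unlike the even-genus case, Proposition \ref{inter} is not available here (an odd-genus surface cannot be cut along a two-sided curve into orientable pieces), so I would carry out the reduction entirely through explicit Dehn twists computed in the trace coordinates $(a,x,z,d)$ constrained by (\ref{star}).

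The basic engine is the one already used for the two-holed Klein bottle. For a two-sided simple closed curve $U$, the induced automorphism of $\pi_1(M) = \langle A,B,C\rangle$ is computed, and then turned into a transformation of the seven traces $(a,b,c,d,x,y,z)$ by repeated use of $\tr(PQ^{-1}) = \tr P\,\tr Q - \tr(PQ)$ and $\tr(PQP^{-1}) = \tr Q$ together with (\ref{fricke}). By the Goldman-flow description of the twist, (\ref{dehn1})--(\ref{dehn2}), $\tau_U$ preserves $\tr(\rho(U))$ and the boundary traces, and acts on the remaining two transverse coordinates as a rotation; I would verify in coordinates that the level sets are genuine ellipses by exhibiting the transverse quadratic form as a sum of two Fricke forms of the two-generator subgroups lying on either side of $U$, whose positivity is exactly the inequality of Lemma \ref{frirel} (this is the analogue of the factorization into $(x\pm z')$ performed for $N_{2,2}$). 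Since $\tr(\rho(U))$ is a non-constant algebraic function on the irreducible variety $\X(M)$, the locus where the rotation angle $\cos^{-1}(\tr(\rho(U))/2)$ is a rational multiple of $\pi$ has measure zero; hence $\tau_U$ acts ergodically on almost every ellipse, and Lemma \ref{ergodic} lets $f$ descend through the trace map of $U$.

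Applying this to a first well-chosen curve removes the dependence of $f$ on the pair $(x,z)$: the transverse quadratic part in $(x,z)$ arising from (\ref{star}) is $x^2+z^2+(ad-k)xz$, whose discriminant is controlled by $ad-k = y = \tr(\rho(BC))$, generically of absolute value $<2$, so the level sets are indeed ellipses. This leaves $f$ a function of $(a,d)$ and the fixed $(b,c,k)$. I would then repeat with one or two further two-sided curves, exploiting the symmetry permuting the three boundary components $B,C,K$ to generate a convenient family of twists, so that the common invariant subalgebra of all the trace maps employed reduces to functions of $(b,c,k)$ alone. Combining the successive applications of Lemma \ref{ergodic} then forces $f$ to be a.e. constant, which is precisely the ergodicity asserted in Proposition \ref{thpp3}.

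The main obstacle I anticipate is eliminating the crosscap trace $a = \tr(\rho(A))$. Because $A$ is one-sided there is no Dehn twist about it, and the single crosscap of $N_{1,3}$ leaves no room for a crosscap slide; consequently every twist supported in the orientable part of the surface fixes $a$, and the reductions above will at best pin $f$ down to a function of $a$ (for instance, after quotienting by the mapping class group of the four-holed sphere $\Sigma_{0,4}$ obtained by cutting along $A^2$, an invariant function depends only on the boundary traces $(a^2-2,b,c,k)$, i.e.\ only on $a$). To move $a$ one must use a two-sided simple closed curve meeting the crosscap core, crossing the M\"obius region twice, and check by explicit computation that its twist acts on the residual one-parameter family of level circles as an irrational rotation. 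This explicit trace-coordinate calculation, together with the recurring verification that each transverse conic is a genuine (positive-definite, bounded) ellipse via Lemma \ref{frirel}, is the technical heart of the argument.
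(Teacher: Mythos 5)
Your plan is essentially the paper's proof: the paper reduces an invariant function to one of $(b,c,k,a,x)$, of $(b,c,k,a,z)$, and of $(b,c,k,x,z)$ by computing the Dehn twists about the two-sided curves $T=A^2B$, $U=CA^2$ and $W=CAB^{-1}A^{-1}$ in the coordinates $(a,x,z,d)$ constrained by (\ref{star}), showing each acts as a generically irrational rotation on the ellipses whose positivity comes from Lemma \ref{frirel}, and then intersecting the three conclusions via Lemma \ref{ergodic}. The obstacle you single out --- moving the crosscap trace $a$ --- is resolved exactly as you predict: the third curve $W=CAB^{-1}A^{-1}$ meets the core of the crosscap essentially, its twist fixes $(x,z)$ and $w=xz-k$ while rotating the pair $(a,d)$, and this is what eliminates the dependence on $a$.
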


\subsection{The action of Dehn twists}
\subsubsection{The twist about the curve $T = AAB$}

The curve $T$ shown in Figure 6 is represented by the element $T=AAB$ in $\pi_1(M)$. It is a two-sided circle, so the Dehn twist about $T$ can be defined.
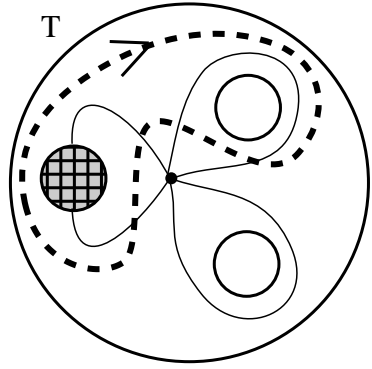
\begin{figure}[ht]
\begin{center}
\scalebox{1} % Change this value to rescale the drawing.
{
\begin{pspicture}(0,-2.5142188)(5.3690624,2.5542188)
\definecolor{color812b}{rgb}{0.8,0.8,0.8}
\pscircle[linewidth=0.04,dimen=outer](2.9690626,-0.11421875){2.4}
\pscircle[linewidth=0.04,dimen=outer,fillstyle=crosshatch*,hatchwidth=0.04,hatchangle=0,hatchsep=0.14,fillcolor=color812b](1.4290625,-0.05421875){0.45}
\pscircle[linewidth=0.04,dimen=outer](3.7290626,-1.1942188){0.45}
\pscircle[linewidth=0.04,dimen=outer](3.7490625,0.8857812){0.45}
\psbezier[linewidth=0.02](2.6690626,-0.05421875)(2.7890625,0.48578125)(2.8490624,0.98578125)(3.1490624,1.3657813)(3.4490626,1.7457813)(4.5490627,1.7657813)(4.4090624,0.84578127)(4.2690625,-0.07421875)(3.3890624,0.20578125)(2.7490625,-0.05421875)
\psbezier[linewidth=0.02](2.7090626,-0.05421875)(2.2690625,0.80578125)(1.3690625,1.3857813)(1.4090625,0.40578124)
\psbezier[linewidth=0.02](1.4090625,-0.45421875)(1.4490625,-1.5142188)(2.3090625,-0.6742188)(2.7090626,-0.05421875)
\psdots[dotsize=0.16](2.7290626,-0.05421875)
\psbezier[linewidth=0.02](2.7090626,-0.05421875)(3.3090625,-0.27421874)(3.9890625,-0.17421874)(4.3290625,-0.99421877)(4.6690626,-1.8142188)(3.6690626,-2.2742188)(3.1290624,-1.5942187)(2.5890625,-0.9142187)(2.8490624,-0.63421875)(2.7090626,-0.09421875)
\psbezier[linewidth=0.08,linestyle=dashed,dash=0.16cm 0.16cm](0.7890625,-0.41421875)(0.5890625,0.4222036)(1.0719324,1.0057813)(1.8436042,1.4457812)(2.6152759,1.8857813)(3.4480577,1.9886552)(4.1180053,1.7257812)(4.7879524,1.4629073)(4.8490624,0.74578124)(4.402305,0.26578125)(3.9555478,-0.21421875)(3.0011117,0.74578124)(2.5340474,0.7057812)(2.0669827,0.66578126)(2.4027343,-0.9610909)(2.08729,-1.1742188)(1.7718458,-1.3873466)(0.9890625,-1.2506411)(0.7890625,-0.41421875)
\usefont{T1}{ptm}{m}{n}
\rput(1.1290625,1.9557812){T}
\psline[linewidth=0.04cm](1.9090625,1.7657813)(2.4290626,1.7457813)
\psline[linewidth=0.04cm](2.4490626,1.7257812)(2.0690625,1.3057812)
\end{pspicture} 
}
\end{center}
\caption{The curve $T$}
\end{figure}

The Dehn twist $\tau_T$ about $T$ is given by the following automorphism of $\pi_1(M)$
\begin{align*}
A \mapsto & \,\, AABAB^{-1}A^{-1}A^{-1} \\
B \mapsto & \,\, AABA^{-1}A^{-1} \\
C \mapsto & \,\, C
\end{align*}
The curves corresponding to $X,K,Z$ and $D$ are transformed by $\tau_T$ as follows
\begin{align*}
X  = AB \mapsto & \,\, AABA^{-1} \\
Z  = CA \mapsto &  \,\, C AABAB^{-1}A^{-1}A^{-1}\\
D  = ABC \mapsto & \,\, A ABA^{-1} C \\
K  = AABC \mapsto & \,\, AABC
\end{align*}
This transformation leaves invariant the boundary character $\mathcal{C}$. So $\tau_T$ induces an action on $\X_{\mathcal{C}} (M)$ which can be seen on the coordinates $(a,x,z,d) \in ]-2 , 2 [^4$ as:

\begin{align*}
a \mapsto & a \\
x \mapsto & x \\
z \mapsto & a^2 x^2 z-a^2 kx-a^2 cx+b^2 z-2abxz\\
& +axd+bcx+abk+kx-bc+ac-z \\
d \mapsto & ak-axz+cx+bz-d 
\end{align*}
The coordinates $a$ and $x$ are $\tau_T$-invariant. We denote $t = \Tr (\rho (T)) = ax-b $. When $t \neq \pm 2$, we rewrite (\ref{star}) as:
{\small
\begin{equation}\label{eqnT}
\frac{2+t}{4} \left( (d+z)- \frac{(a+x)(c+k)}{t+2} \right)^2 + \frac{2-t}{4} \left( (d-z) - \frac{(a-x)(k-c)}{t-2} \right)^2 = R_T 
\end{equation}
}
with
$$R_T := \frac{(t^2 +c^2 +k^2 -tck-4)(a^2 +b^2 +x^2 -abx-4)}{4-t^2 }.$$
The function $R_T$ is also $\tau_T$-invariant. For a fixed value of $t$ in $] -2 , 2[ $, the left term of the equation $(\ref{eqnT} )$ is a quadratic function of $d$ and $z$ with positive coefficients. In the right term $R_T$ we recognize $(a,b,x)$ and $(t,c,k)$ as the characters of representations in $\SU (2)$ of the free groups in two generators $\langle A , B \rangle$ and $\langle T , C \rangle$ respectively, as we have $AB = X$ and $TC = K$. So according to Lemma \ref{frirel}, we have the following inequalities:
\begin{align}
(a^2 +b^2 +x^2 -abx-4) & \leq 0, \\
(t^2 +c^2 +k^2 -tck-4) & \leq 0.
\end{align}

Moreover $4 - t^2  > 0$, so that $R_T \geq 0$. So the set of coordinates $(d, z)$ satisfying the equation (\ref{eqnT} ) corresponds to an ellipse. For fixed values of $b,c,k,a,x$, the intersection 
\begin{align*}
  E_{\mathcal{C}} & : = \X_{\mathcal{C}} (M) \cap (\{ a,x \} \times \R ^2 ) \\
   & = \{ a,x \} \times \left\{ (d,z) \in \mathbb{R} ^2 \mid (d,z) \mbox{ satisfies } \ref{eqnT}  \right\}
   \end{align*}
is an ellipse preserved by $\tau_T$. This exhibits $\X_{\mathcal{C}} (M)$ as a family of ellipses $E_{\mathcal{C}} (M) (a,x)$ parametrized by $(b,c,k,a,x)$. We can rewrite the equation in the following way :
$$Q_t ( d-d_0 (t) , z - z_0(t) ) = R_T$$
with
\begin{align}
d_0 (\nu) & = ac+xk- \nu (ak+cx) \\
z_0 (\nu) & = ak+xc \\
Q_{\nu}(\eta,\zeta) & = \frac{\eta^2+\zeta^2 - \nu \eta\zeta }{4-\nu^2}
\end{align}
Now we express the action of $\tau_T$ on $d,z$ in terms of $d_0 , z_0$, which gives us (after simplification) :
\begin{equation} \left[ \begin{array}{c} d \\ z \end{array} \right] \mapsto \left[ \begin{array}{c} d_0(t) \\ z_0(t) \end{array} \right] +\left[ \begin{array}{cc} -1 & -t \\ t & t^2 -1 \end{array} \right] \cdot\left( \left[ \begin{array}{c} d \\ z \end{array} \right] -  \left[ \begin{array}{c} d_0(t) \\ z_0(t) \end{array} \right] \right) . \end{equation}

This transformation is the rotation of angle $- \theta_T = -2 \cos ^{-1} (t/2)$ on the ellipse $E_{\mathcal{C}} (M) (a,x)$ defined for $(b,c,k,a,x)$ fixed. In particular, for fixed boundary traces $(b,c,k)$ and for almost all $(a,x)$, the angle $\theta_T$ is an irrational multiple of $\pi$ . So for almost all $(a,x)$, the action of $\tau_T$ is ergodic on the ellipse $E_{\mathcal{C}} (M) (a,x)$.

\subsubsection{The twist about the curve $U = CAA$}

The curve $U$ shown in Figure 7 is represented by the element $U=CAA$ in $\pi_1(M)$. It is a simple two-sided circle, so the Dehn twist about $U$ can be defined.
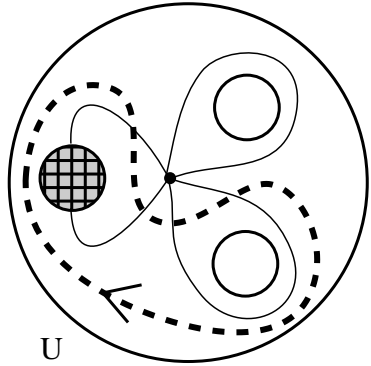
\begin{figure}[ht]
\begin{center}
\scalebox{1} % Change this value to rescale the drawing.
{
\begin{pspicture}(0,-2.5142188)(5.388125,2.5542188)
\definecolor{color812b}{rgb}{0.8,0.8,0.8}
\pscircle[linewidth=0.04,dimen=outer](2.988125,-0.11421875){2.4}
\pscircle[linewidth=0.04,dimen=outer,fillstyle=crosshatch*,hatchwidth=0.04,hatchangle=0,hatchsep=0.14,fillcolor=color812b](1.448125,-0.05421875){0.45}
\pscircle[linewidth=0.04,dimen=outer](3.748125,-1.1942188){0.45}
\pscircle[linewidth=0.04,dimen=outer](3.768125,0.8857812){0.45}
\psbezier[linewidth=0.02](2.688125,-0.05421875)(2.808125,0.48578125)(2.868125,0.98578125)(3.168125,1.3657813)(3.468125,1.7457813)(4.568125,1.7657813)(4.428125,0.84578127)(4.288125,-0.07421875)(3.408125,0.20578125)(2.768125,-0.05421875)
\psbezier[linewidth=0.02](2.728125,-0.05421875)(2.288125,0.80578125)(1.388125,1.3857813)(1.428125,0.40578124)
\psbezier[linewidth=0.02](1.428125,-0.45421875)(1.468125,-1.5142188)(2.328125,-0.6742188)(2.728125,-0.05421875)
\psdots[dotsize=0.16](2.748125,-0.05421875)
\psbezier[linewidth=0.02](2.728125,-0.05421875)(3.328125,-0.27421874)(4.008125,-0.17421874)(4.348125,-0.99421877)(4.688125,-1.8142188)(3.688125,-2.2742188)(3.148125,-1.5942187)(2.608125,-0.9142187)(2.868125,-0.63421875)(2.728125,-0.09421875)
\usefont{T1}{ptm}{m}{n}
\rput(1.1753125,-2.3242188){U}
\psline[linewidth=0.04cm](1.848125,-1.5742188)(2.188125,-1.9742187)
\psline[linewidth=0.04cm](2.368125,-1.4742187)(1.888125,-1.5742188)
\psbezier[linewidth=0.08,linestyle=dashed,dash=0.16cm 0.16cm](0.828125,-0.05421875)(0.868125,0.72578126)(1.328125,1.3457812)(1.948125,1.1457813)(2.568125,0.94578123)(1.9840757,-0.30682507)(2.508125,-0.59421873)(3.0321743,-0.8816124)(3.638912,-0.076725245)(4.068125,-0.13421875)(4.497338,-0.19171226)(4.908125,-0.99421877)(4.588125,-1.6342187)(4.268125,-2.2742188)(3.274884,-2.1898754)(2.348125,-1.8142188)(1.4213661,-1.4385622)(0.788125,-0.83421874)(0.828125,-0.05421875)
\end{pspicture} 
}
\end{center}
\caption{The curve $U$}
\end{figure}

The Dehn twist $\tau_U$ about $U$ is given by the automorphism of $\pi_1(M)$
\begin{align*}
A \mapsto & \,\, CAAAA^{-1}A^{-1}C^{-1} = C A C^{-1} \\
B \mapsto & \,\, B \\
C \mapsto & \,\, CAACA^{-1}A^{-1}C^{-1}\\
\end{align*}
The curves corresponding to $X,K,Z$ and $D$ map to :
\begin{align*}
X = AB \mapsto & \,\, CAC^{-1}B \\
Z = CA \mapsto & \,\, C AACA^{-1}C^{-1}\\
D = ABC \mapsto & \,\, CAC^{-1}BCAACA^{-1}A^{-1}C^{-1} \\
K = AABC \mapsto & \,\, CAAC^{-1}BCAACA^{-1}A^{-1}C^{-1}
\end{align*}

We easily check that this transformation leaves invariant the boundary character $\mathcal{C}$. So $\tau_U$ induces an action on $\X_{\mathcal{C}} (M)$ which can be seen on the coordinates $(a,x,z,d) \in ]-2 , 2 [^4$
\begin{align*}
a \mapsto & a \\
x \mapsto & ab-zy+cd-x \\
z \mapsto & z \\
d \mapsto & a d - a y + a b c - a c x + a b z - a x z + a c d z - a c y z +\\
  & a^2  d + a c^2   d - a y z^2  
\end{align*}
The coordinates $a$ and $z$ are $\tau_U$-invariant. We denote $u = \Tr (\rho (U)) = az-c $ and when $u \neq \pm 2$, and we rewrite (\ref{star}) as
$$Q_u ( d-d_0 (u) , x - x_0(u) ) = R_U$$
with $u:=\Tr U = az-c$ which is $\tau_U$-invariant and 
\begin{align}
d_0 (\nu) & = ab+zk- \nu (ak+bz) \\
x_0 (\nu) & = ak+zb \\
R_U & = \frac{(u^2 +b^2 +k^2 -ubk-4)(a^2 +c^2 +z^2 -acz-4)}{4-u^2 }
\end{align}

The function $R_U$ is also $\tau_U$-invariant. For fixed value of $-2 < u < 2 $, the left term of the equation is a quadratic function of $d$ and $x$ with positive coefficients. In the right term $R_U$ we recognize $(a,c,z)$ and $(u,b,k)$ as the characters of representations in $\SU (2)$ of the free groups in two generators $\langle C , A \rangle$ and $\langle U , B \rangle$ respectively, as we have $ CA = Z$ and $UB = C K C^{-1}$. So according to Lemma \ref{frirel}, we have:
\begin{align}
(a^2 +c^2 +z^2 -acz-4) & \leq 0, \\
(u^2 +b^2 +k^2 -ubk-4) & \leq 0.
\end{align}
Moreover $4 - u^2  > 0$, so that $R_U \geq 0$. So the set of coordinates $d$ and $x$ satisfying the equation corresponds to an ellipse. This exhibits $\X_{\mathcal{C}} (M)$ as a family of ellipses $E_{\mathcal{C}} (M) (a,z)$ parametrized by $(b,c,k,a,z)$. Now we express the transformation $\tau_U$ on the coordinates $d,x$ in terms of $d_0 , x_0$, which gives us :
\begin{equation} \left[ \begin{array}{c} d \\ x \end{array} \right] \mapsto \left[ \begin{array}{c} d_0(u) \\ x_0(u) \end{array} \right] + \left[ \begin{array}{cc} -1 & -u \\ u & u^2 -1 \end{array} \right] \cdot\left( \left[ \begin{array}{c} d \\ x \end{array} \right] - \left[ \begin{array}{c} d_0(u) \\ x_0(u) \end{array} \right] \right)  . \end{equation}

This transformation is the rotation of angle $- \theta_U = -2 \cos ^{-1} (u/2)$ on the ellipse $E_{\mathcal{C}} (M) (a,z)$ defined earlier for fixed $(b,c,k,a,z)$. In particular, for fixed boundary traces $(b,c,k)$ and for almost all $(a,z)$, $\theta_U$ is an irrational multiple of $\pi$. So for almost all $(a,z)$, the action of $\tau_U$ is ergodic on the ellipse $E_{\mathcal{C}} (M) (a,z)$.

\subsubsection{The twist about $W$}

The curve $W$ shown in Figure \ref{fig:W} is represented by the element $W=CAB^{-1}A^{-1}$ in $\pi_1(M)$. It is a simple two-sided circle, so the Dehn twist about $W$ can be defined.
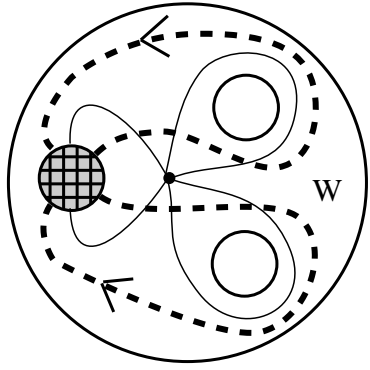
\begin{figure}[ht]\label{fig:W}
\begin{center}
\scalebox{1} % Change this value to rescale the drawing.
{
\begin{pspicture}(0,-2.7442188)(5.8453126,2.7842188)
\definecolor{color812b}{rgb}{0.8,0.8,0.8}
\pscircle[linewidth=0.04,dimen=outer](3.4453125,-0.34421876){2.4}
\pscircle[linewidth=0.04,dimen=outer,fillstyle=crosshatch*,hatchwidth=0.04,hatchangle=0,hatchsep=0.14,fillcolor=color812b](1.9053125,-0.28421876){0.45}
\pscircle[linewidth=0.04,dimen=outer](4.2053127,-1.4242188){0.45}
\pscircle[linewidth=0.04,dimen=outer](4.2253127,0.65578127){0.45}
\psbezier[linewidth=0.02](3.1453125,-0.28421876)(3.2653124,0.25578126)(3.3253126,0.75578123)(3.6253126,1.1357813)(3.9253125,1.5157813)(5.0253124,1.5357813)(4.8853126,0.61578125)(4.7453127,-0.30421874)(3.8653126,-0.02421875)(3.2253125,-0.28421876)
\psbezier[linewidth=0.02](3.1853125,-0.28421876)(2.7453125,0.5757812)(1.8453125,1.1557813)(1.8853126,0.17578125)
\psbezier[linewidth=0.02](1.8853126,-0.68421876)(1.9253125,-1.7442187)(2.7853124,-0.90421873)(3.1853125,-0.28421876)
\psdots[dotsize=0.16](3.2053125,-0.28421876)
\psbezier[linewidth=0.02](3.1853125,-0.28421876)(3.7853124,-0.50421876)(4.4653125,-0.40421876)(4.8053126,-1.2242187)(5.1453123,-2.0442188)(4.1453123,-2.5042188)(3.6053126,-1.8242188)(3.0653124,-1.1442188)(3.3253126,-0.8642188)(3.1853125,-0.32421875)
\usefont{T1}{ptm}{m}{n}
\rput(5.311406,-0.45421875){W}
\psbezier[linewidth=0.08,linestyle=dashed,dash=0.16cm 0.16cm](2.2853124,-0.52421874)(2.8053124,-0.9242188)(4.5232244,-0.44127154)(4.9453125,-0.82421875)(5.3674006,-1.207166)(5.0538015,-2.1503367)(4.4453125,-2.3242188)(3.8368237,-2.4981008)(1.6968781,-1.4086272)(1.6653125,-1.3735399)(1.633747,-1.3384528)(1.4053125,-0.956338)(1.6253124,-0.6384698)
\psbezier[linewidth=0.08,linestyle=dashed,dash=0.16cm 0.16cm](2.2253125,0.03578125)(2.4853125,0.29578125)(2.745383,0.30765373)(3.1053126,0.33578125)(3.465242,0.36390877)(4.2148633,-0.19064023)(4.6253123,-0.14421874)(5.0357614,-0.09779728)(5.225741,0.40850857)(5.1253123,0.91578126)(5.0248837,1.423054)(4.5450716,1.657719)(3.5453124,1.6357813)(2.5455532,1.6138434)(1.9018767,1.0537882)(1.9053125,1.0557812)(1.9087483,1.0577743)(1.3053125,0.41578126)(1.6453125,0.11578125)
\psline[linewidth=0.04cm](2.4653125,-2.0642188)(2.3053124,-1.6842188)
\psline[linewidth=0.04cm](2.3253126,-1.6642188)(2.7253125,-1.6242187)
\psline[linewidth=0.04cm](3.1853125,1.8757813)(2.8253126,1.5557812)
\psline[linewidth=0.04cm](2.8253126,1.5557812)(3.2053125,1.3357812)
\end{pspicture} 
}
\end{center}
\caption{The curve $W$}
\end{figure}

%\begin{remark} The curve $W$ is the smallest curve (in terms of word-length in $\pi_1{M}$), such that the Dehn twist $\tau_W$ acts non-trivially on the coordinate $a$. In the case of the two-holed projective plane $N_{1,2}$, such a curve does not exists and hence the mapping class group of a two-holed projective plane is trivial. 
%\end{remark}

The Dehn twist $\tau_U$ about $U$ is given by the following automorphism of $\pi_1(M)$
\begin{align*}
A \mapsto & \,\, CAB^{-1}A^{-1}C^{-1}AB \\
B \mapsto & \,\, (B^{-1}A^{-1}CA)B(A^{-1}C^{-1}AB )\\
C \mapsto & \,\, CAB^{-1}A^{-1}CABA^{-1}C^{-1} = W C W^{-1}
\end{align*}
The curves corresponding to $X,K,Z$ and $D$ are transformed by $\tau_W$ as follows:
\begin{align*}
X  = AB \mapsto & \,\, AB \\
Z  = CA \mapsto & \,\, CA\\
D  = ABC \mapsto & \,\, ABCAB^{-1}A^{-1}CABA^{-1}C^{-1} \\
K  = AABC \mapsto & \,\, (CAB^{-1}A^{-1}C^{-1}AB ) ABCA (B^{-1}A^{-1}CABA^{-1}C^{-1}) 
\end{align*}
We easily check that this transformation leaves invariant the boundary character $\mathcal{C}$. So $\tau_W$ induces an action on $\X_{\mathcal{C}} (M)$ which can be seen on the coordinates $(a,x,z,d) \in ]-2 , 2 [^4$ as follows:
\begin{align*}
a \mapsto & w(xc-d)-(x(cw-b)-(zc-a)) \\
x \mapsto & x \\
z \mapsto & z \\
d \mapsto & w(dw-(zc-a))-(x(wb-c)-(zb-d)) ,
\end{align*}
where $w = \Tr W = xz-k$. The coordinates $x$ and $z$ are $\tau_W$-invariant and when $w \neq \pm 2$ we rewrite (\ref{star}) as
$$Q_w ( a-a_0 (w) , d - d_0(w) ) = R_W$$
with
\begin{align*}
a_0 (\nu) & = bx+cz- \nu (bz+cx) \\
d_0 (\nu) & = bz+cx \\
R_W & = \frac{(x^2 +z^2 +k^2 -xzk-4)(b^2 +c^2 +w^2 -bcw-4)}{4-w^2 }.
\end{align*}
Moreover, $R_W$ is $\tau_W$-invariant. For a fixed value of $-2 < w < 2 $, the left term of the equation is a quadratic function of $a$ and $d$ with positive coefficients. In the right term $R_W$ we recognize $(x,z,k)$ and $(b,c,w)$  as characters of representations in $\SU (2)$ of the free groups $\langle X , Z \rangle$ and $\langle C , A B^{-1} A^{-1} \rangle$ respectively, because $ XZ = ABCA = A^{-1} W A$ and $C A B^{-1} A^{-1} = W$. So according to Lemma \ref{frirel}, we have:
\begin{align*}
(x^2 +z^2 +k^2 -xzk-4) & \leq 0, \\
(b^2 +c^2 +w^2 -bcw-4) & \leq 0.
\end{align*}
Moreover $4 - w^2  > 0$, which implies $R_W \geq 0$. So the set of coordinates $d$ and $a$ satisfying the equation corresponds to an ellipse. This exhibits $\X_{\mathcal{C}} (M)$ as a family of ellipses $E_{\mathcal{C}} (M) (b,c)$ parametrized by $(b,c,k,x,z)$. Then we express the transformation $\tau_W$ on the coordinates $a$ and $d$ in terms of $a_0$ and $d_0$, which gives us :
\begin{equation} \left[ \begin{array}{c} a \\ d \end{array} \right] \mapsto \left[ \begin{array}{c} a_0(w) \\ d_0(u) \end{array} \right] + \left[ \begin{array}{cc} -1 & -w \\ w & w^2 -1 \end{array} \right] \cdot \left( \left[ \begin{array}{c} a \\ d \end{array} \right] - \left[ \begin{array}{c} a_0(w) \\ d_0(u) \end{array} \right] \right) . \end{equation}
 
This transformation is the rotation of angle $- \theta_W = -2 \cos ^{-1} (w/2)$ on the ellipse $E_{\mathcal{C}} (M) (x,z)$ defined earlier for fixed $(b,c,k,x,z)$. In particular, for fixed boundary traces $(b,c,k)$ and for almost all $(x,z)$, the angle $\theta_W$ is an irrational multiple of $\pi$. So for almost all $(x,z)$,  the action of $\tau_W$ is ergodic on the set $E_{\mathcal{C}} (M) (x,z)$.

\subsubsection{Conclusion}

Let $f : \X(M) \longrightarrow \mathbb{R}$ be a measurable $\Gamma_M$-invariant function. In particular $f$  is $\tau_T , \tau_U$ and $\tau_W$-invariant. We deduce from the Lemma of ergodic decomposition, that a measurable function $f$ that is $\tau_T$-invariant,  $\tau_U$-invariant or $\tau_W$-invariant, is almost everywhere equal to a function depending only on the coordinates $(b,c,k,a,x)$, the coordinates $(b,c,k,a,z)$ or the coordinates $(b,c,k,x,z)$ respectively. Therefore a measurable $\Gamma_M$-invariant function $f$ is almost everywhere equal to a function depending only on the traces of the boundary components $\mathcal{C} = (b,c,k)$. This proves the ergodicity of the $\Gamma_M$-action on $\X_\mathcal{C} (M)$, and hence Proposition \ref{thpp3}.

\subsection{The case of $N_{3,1}$}

In this section, we consider the case where $M$ is the non-orientable surface of genus 3 with one boundary component $N_{3,1}$. 
\begin{figure}[ht]\label{fig:N31}
\begin{center}
\scalebox{0.8} % Change this value to rescale the drawing.
{
\begin{pspicture}(0,-2.936875)(14.424063,2.976875)
\psarc[linewidth=0.04](1.9040625,-0.576875){1.0}{90.0}{270.0}
\psline[linewidth=0.04cm](1.8840625,0.423125)(5.7040625,0.423125)
\psline[linewidth=0.04cm](1.9040625,-1.576875)(5.7440624,-1.556875)
\pscircle[linewidth=0.04,dimen=outer,fillstyle=crosshatch*,hatchwidth=0.04,hatchangle=0,hatchsep=0.14](2.1040626,-0.576875){0.45}
\pscircle[linewidth=0.04,dimen=outer,fillstyle=crosshatch*,hatchwidth=0.04,hatchangle=0,hatchsep=0.14](3.5640626,-0.596875){0.45}
\psellipse[linewidth=0.04,dimen=outer](7.6640625,-0.576875)(0.2,0.4)
\pscircle[linewidth=0.04,dimen=outer](11.984062,-0.536875){2.4}
\pscircle[linewidth=0.04,dimen=outer,fillstyle=crosshatch*,hatchwidth=0.04,hatchangle=0,hatchsep=0.14](10.844063,0.383125){0.45}
\pscircle[linewidth=0.04,dimen=outer,fillstyle=crosshatch*,hatchwidth=0.04,hatchangle=0,hatchsep=0.14](11.244062,-1.796875){0.45}
\pscircle[linewidth=0.04,dimen=outer,fillstyle=crosshatch*,hatchwidth=0.04,hatchangle=0,hatchsep=0.14](13.144062,0.503125){0.45}
\psbezier[linewidth=0.02](12.804063,-1.136875)(11.964063,-0.596875)(11.784062,1.103125)(11.144062,0.763125)
\psbezier[linewidth=0.02](10.424063,0.203125)(9.424063,-0.216875)(11.304063,-0.276875)(12.844063,-1.176875)
\psbezier[linewidth=0.02](12.844063,-1.216875)(11.484062,-0.976875)(9.884063,-0.676875)(10.904062,-1.496875)
\psbezier[linewidth=0.02](11.564062,-2.136875)(12.204062,-2.756875)(12.584063,-2.356875)(12.844063,-1.176875)
\psdots[dotsize=0.16](12.844063,-1.196875)
\psline[linewidth=0.04cm](12.544063,-1.676875)(12.584063,-1.996875)
\psline[linewidth=0.04cm](12.584063,-2.016875)(12.864062,-1.756875)
\psline[linewidth=0.04cm](10.784062,-0.536875)(10.6640625,-0.236875)
\psline[linewidth=0.04cm](10.684063,-0.256875)(11.004063,-0.236875)
\psline[linewidth=0.04cm](12.264063,0.003125)(12.364062,0.263125)
\psline[linewidth=0.04cm](12.384063,0.283125)(12.584063,0.043125)
\psline[linewidth=0.04cm](14.024062,-1.296875)(14.064062,-1.656875)
\psline[linewidth=0.04cm](14.064062,-1.656875)(14.404062,-1.476875)
\usefont{T1}{ptm}{m}{n}
\rput(13.010625,-2.046875){A}
\usefont{T1}{ptm}{m}{n}
\rput(10.41125,-0.546875){B}
\usefont{T1}{ptm}{m}{n}
\rput(12.21125,1.053125){C}
\usefont{T1}{ptm}{m}{n}
\rput(14.572657,-1.386875){K}
\pscircle[linewidth=0.04,dimen=outer,fillstyle=crosshatch*,hatchwidth=0.04,hatchangle=0,hatchsep=0.14](5.0440626,-0.596875){0.45}
\psbezier[linewidth=0.04](5.7040625,0.423125)(6.7040625,0.423125)(6.7040625,-0.216875)(7.6840625,-0.176875)
\psbezier[linewidth=0.04](7.6840625,-0.936875)(6.6840625,-0.916875)(6.7040625,-1.576875)(5.7240624,-1.556875)
\psbezier[linewidth=0.02](12.824062,-1.176875)(12.184063,0.563125)(12.244062,0.863125)(12.724063,0.663125)
\psbezier[linewidth=0.02](13.544063,0.343125)(14.064062,0.183125)(13.644062,-0.316875)(12.844063,-1.136875)
\end{pspicture} 
}
\end{center}
\caption{The surface $N_{3,1}$}
\end{figure}
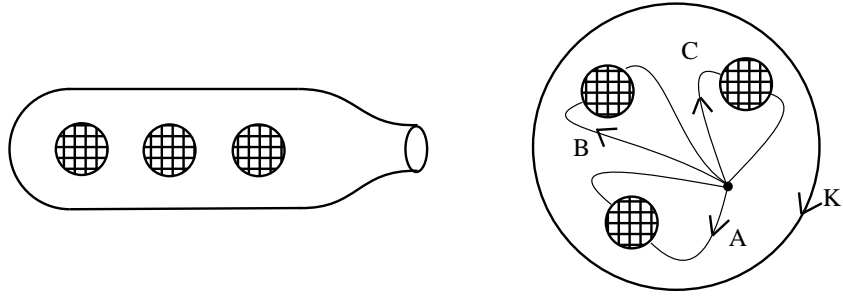

Its fundamental group admits the following presentation
$$\pi = \pi_1(M)=\langle A , B , C , K \mid  A^2 B^2 C^2 K^{-1} \rangle ,$$
where $A,B,C,K$ are the curves drawn in Figure \ref{fig:N31}. We see that $\pi$ is a free group on three generators $A,B,C$, so the coordinates on the space $\X (M)$ are given by the seven trace functions $a,b,c,d,x,y,z$ satisfying the Fricke relation (\ref{fricke}). The character of the boundary of $M$ is $\X (\partial M) = \{ k \in [-2 , 2 ] \} \, ,$ where $k=\tr(\rho(K)) = \tr (\rho (A^2B^2C^2)) = abcd-bcy-acz-bax+a^2+b^2+c^2-2$.

\begin{proposition}\label{thn31} For all boundary component $\mathcal{C} = k \in ] -2 , 2 [$, the action of $\Gamma_M$ on $\X_{\mathcal{C}} (M)$ is ergodic.
\end{proposition}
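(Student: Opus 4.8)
The plan is to follow the same scheme that settles the three-holed projective plane in Proposition \ref{thpp3}, adapted to the fact that $N_{3,1}$ has a single boundary component, so that $k$ is the only boundary invariant and the relative character variety $\X_{\mathcal{C}}(M)$ is five-dimensional: here \emph{all} of the internal trace coordinates $a,b,c,x,y,z,d$ (subject to the Fricke relation (\ref{fricke}) and to $k$ being fixed) must be eliminated, rather than just four coordinates as for $N_{1,3}$. The first thing I would do is record the evident cyclic symmetry of the presentation $\langle A,B,C\mid A^2B^2C^2K^{-1}\rangle$: the substitution $A\mapsto B\mapsto C\mapsto A$ carries the relator $A^2B^2C^2$ to its conjugate $A^{-2}(A^2B^2C^2)A^2=B^2C^2A^2$, hence preserves the conjugacy class of $K$ and defines an element of $\Gamma_M=\mathrm{Out}(\pi,\partial M)$. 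On trace coordinates it acts by $a\mapsto b\mapsto c\mapsto a$ and $x\mapsto y\mapsto z\mapsto x$, while fixing $d$ and $k$. Thus any reduction obtained from a single Dehn twist automatically yields two further reductions by applying this symmetry.

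For a suitable two-sided simple closed curve $V$ on $M$ I would write the automorphism of $\pi_1(M)$ induced by $\tau_V$, verify that it preserves the conjugacy class of $K$ (so $\tau_V\in\Gamma_M$ and $k$ is $\tau_V$-invariant), and compute its action on $(a,b,c,d,x,y,z)$. As in the $N_{1,3}$ computations, I expect the trace $w=\Tr\rho(V)$ together with a complementary pair of coordinates to be left invariant; substituting the boundary relation $k=\tr(\rho(A^2B^2C^2))$ into (\ref{fricke}) and completing the square should then rewrite the defining equation of $\X_{\mathcal{C}}(M)$ in an elliptic form $Q_w(\,\cdot\,,\,\cdot\,)=R_V$ entirely analogous to (\ref{eqnT}). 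Recognizing the two relevant triples of traces as characters of two-generator subgroups of $\pi$ and invoking Lemma \ref{frirel} gives $R_V\geq 0$, so the generic fibers of the corresponding coordinate projection are ellipses on which $\tau_V$ acts as the rotation of angle $\pm 2\cos^{-1}(w/2)$. For almost every value of the remaining parameters this angle is an irrational multiple of $\pi$, hence the rotation is ergodic, and the ergodic decomposition Lemma \ref{ergodic} shows that a $\Gamma_M$-invariant measurable function is almost everywhere a function of the coordinates fixed by $\tau_V$.

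Assembling the reductions coming from a symmetric collection of such curves is the final step. Each twist kills the dependence of an invariant function on the two coordinates spanning its ellipse, and the cyclic symmetry redistributes this over the triples $(a,b,c)$ and $(x,y,z)$; choosing the curves so that the union of their rotated coordinate pairs meets every internal coordinate and so that the successive applications of Lemma \ref{ergodic} genuinely collapse the whole five-dimensional fiber, one concludes exactly as in the conclusion of Proposition \ref{thpp3} that a $\Gamma_M$-invariant function depends only on $k$. This is precisely the statement that $\Gamma_M$ acts ergodically on $\X_{\mathcal{C}}(M)$.

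The hard part will be this last bookkeeping rather than any single twist computation. Because $\dim\X_{\mathcal{C}}(M)=5$ instead of $3$, a single cyclic family of three twists (the naive transcription of the $N_{1,3}$ argument) need not intersect down to dependence on $k$ alone, so one must exhibit enough two-sided curves — likely more than one cyclic family — whose rotated directions together span all five fiber directions generically. Producing explicit such curves, computing the induced automorphisms and their effect on $(a,b,c,d,x,y,z)$, and checking in each case that elimination of the boundary relation really puts (\ref{fricke}) into elliptic form with $R_V\geq 0$ via Lemma \ref{frirel}, is the technical heart of the argument and where the calculations become substantially heavier than for the three-holed projective plane.
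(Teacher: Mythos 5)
There is a genuine gap, and you have in fact put your finger on it yourself: the ``final bookkeeping'' that you defer is precisely the step your outline does not supply, and it is where the paper does something different from what you propose. Your plan is to accumulate enough Dehn--twist reductions (helped by the cyclic symmetry $a\mapsto b\mapsto c\mapsto a$, $x\mapsto y\mapsto z\mapsto x$) until the invariant coordinate sets intersect in $\{k\}$ alone. But, as you note, the cyclic orbit of the single twist about $U=AACC$ only produces the sets $\{k,a,b,c,z\}$, $\{k,a,b,c,x\}$, $\{k,a,b,c,y\}$, whose intersection still contains $a,b,c$; to finish you would have to exhibit further two-sided curves whose associated ellipses rotate the coordinates $a$, $b$, $c$, verify positivity of the corresponding $R_V$ via Lemma \ref{frirel}, and check that the resulting invariant sets really do cut down to $\{k\}$. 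None of this is done, and it is not obvious it can be done cheaply: the crosscap generators $A,B,C$ themselves are one-sided, so eliminating their traces by twist rotations requires genuinely new curves and computations. A proof cannot end at the point where the only remaining step is declared to be the technical heart.

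The paper avoids this entirely by a different second move. After the single twist $\tau_U$ about $U=AACC$ reduces an invariant function to one of $(k,a,b,c,z)$, it does \emph{not} look for more twists on $N_{3,1}$; instead it cuts $M$ along the two-sided non-separating curve $X=AB$. The cut surface is a three-holed projective plane with boundary traces $(x,x,k)$, its mapping class group embeds in $\Gamma_M$, and Proposition \ref{thpp3} (already proved) says a $\Gamma_A$-invariant function on its relative character varieties depends only on the boundary traces. Hence $f=G(k,x)$ almost everywhere, which kills $a,b,c,y,z,d$ in one stroke; intersecting with the $\tau_U$ reduction to $(k,a,b,c,z)$ leaves dependence on $k$ alone. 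This ``cut along a curve and reuse the previously established ergodicity of the smaller piece'' device is the missing idea in your proposal, and it is also the mechanism used throughout Sections \ref{section:pair} and \ref{section:conclusion}; without it (or a completed substitute family of explicit twists), your argument does not reach the conclusion.
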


\begin{proof}
The curve $U$ shown in Figure 9 is represented by $W=AACC \in \pi_1(M)$. It is a simple two-sided curve, so the Dehn twist can be defined about it.
\begin{figure}[ht]
\begin{center}
\scalebox{1} % Change this value to rescale the drawing.
{
\begin{pspicture}(0,-2.45)(5.323125,2.41)
\pscircle[linewidth=0.04,dimen=outer](2.56,0.01){2.4}
\pscircle[linewidth=0.04,dimen=outer,fillstyle=crosshatch,hatchwidth=0.04,hatchangle=0,hatchsep=0.14](1.42,0.93){0.45}
\pscircle[linewidth=0.04,dimen=outer,fillstyle=crosshatch,hatchwidth=0.04,hatchangle=0,hatchsep=0.14](1.82,-1.25){0.45}
\pscircle[linewidth=0.04,dimen=outer,fillstyle=crosshatch,hatchwidth=0.04,hatchangle=0,hatchsep=0.14](3.72,1.05){0.45}
\psbezier[linewidth=0.02](3.38,-0.59)(2.54,-0.05)(2.36,1.65)(1.72,1.31)
\psbezier[linewidth=0.02](1.0,0.75)(0.0,0.33)(1.88,0.27)(3.42,-0.63)
\psbezier[linewidth=0.02](3.42,-0.67)(2.06,-0.43)(0.46,-0.13)(1.48,-0.95)
\psbezier[linewidth=0.02](2.14,-1.59)(2.78,-2.21)(3.16,-1.81)(3.42,-0.63)
\psdots[dotsize=0.16](3.42,-0.65)
\psline[linewidth=0.04cm](4.6,-0.75)(4.64,-1.11)
\psline[linewidth=0.04cm](4.64,-1.11)(4.98,-0.93)
\usefont{T1}{ptm}{m}{n}
\rput(5.148594,-0.84){K}
\psbezier[linewidth=0.02](3.4,-0.63)(2.76,1.11)(2.82,1.41)(3.3,1.21)
\psbezier[linewidth=0.02](4.12,0.89)(4.64,0.73)(4.22,0.23)(3.42,-0.59)
\psbezier[linewidth=0.08,linestyle=dashed,dash=0.16cm 0.16cm](1.58,0.15)(2.92,0.29)(2.44,2.27)(3.6,1.83)(4.76,1.39)(4.739589,0.19755611)(4.26,-0.77)(3.7804105,-1.7375561)(2.52,-2.41)(1.68,-1.99)(0.84,-1.57)(0.24,0.01)(1.58,0.15)
\usefont{T1}{ptm}{m}{n}
\rput(2.5295312,1.95){\Large U}
\psline[linewidth=0.04cm](2.46,1.55)(2.8,1.61)
\psline[linewidth=0.04cm](2.84,1.61)(2.88,1.23)
\end{pspicture} 
}
\end{center}
\caption{The curve $U$}
\end{figure}
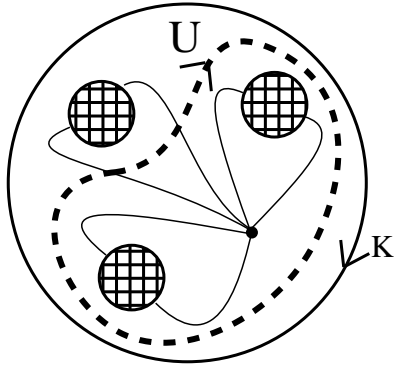

The Dehn twist $\tau_U$ about the curve $U$ is given by the following automorphism of $\pi_1(M)$:
\begin{align*}
A \mapsto & \,\, A \\
B \mapsto & \,\, A^{-2}C^{-2}BC^2A^2 \\
C \mapsto & \,\, C
\end{align*}
The elements corresponding to $X,Y,Z,D$ and $K$ are transformed by $\tau_U$ as follows:
\begin{align*}
X = AB \mapsto & \,\, A^{-1}C^{-2}BC^2AA \\
Y = BC \mapsto & \,\, A^{-2}C^{-2}BC^2A^2C\\
Z = CA \mapsto & \,\, CA \\
D = ABC \mapsto & \,\, A^{-1}C^{-2}BC^2A^2C \\
K = AABBC \mapsto & \,\, AABBC
\end{align*}

This transformation leaves invariant the boundary component $K$. So the Dehn twist $\tau_U$ induces an action on $\X_{\mathcal{C}} (M)$ which can be seen on the coordinates $(a,b,c,x,y,z,d) \in ]-2 , 2 [^7$ as:
\begin{align*}
a \mapsto & a \\
b \mapsto & b \\
c \mapsto & c \\
z \mapsto & z \\
x \mapsto & c^3d-c^2yz-c^2x-2cd+cbz+cay+x \\
y \mapsto & \tau_U (y)\\
d \mapsto & \tau_U (d) 
\end{align*}
where \\
$\tau_U (y) = -a^4bc^3+a^4bc+a^4c^2y+a^3bc^4z-a^3bc^2z+a^3c^3x-a^3c^3yz-a^3c^2d-2a^3cx+a^3d-a^2bc^5-a^2bc^3z^2+4a^2bc^3+a^2bcz^2-2a^2bc-a^2c^4xz+a^2c^4y+a^2c^3dz+3a^2c^2xz+a^2c^2yz^2-3a^2c^2y-2a^2cdz-a^2xz-a^2y+abc^4z-3abc^2z+abz+ac^5x-ac^4d-5ac^3x-ac^3yz+4ac^2d+5acx+2acyz-2ad+y$\\
and\\
$\tau_U (d) = -a^3bc^3+a^3bc+a^3c^2y+a^2bc^4z-a^2bc^2z+a^2c^3x-a^2c^3yz-a^2c^2d-2a^2cx+a^2d-abc^5-abc^3z^2+3abc^3+abcz^2-abc-ac^4xz+ac^4y+ac^3dz+3ac^2xz+ac^2yz^2-2ac^2y-2acdz-axz-ay+bc^4z-2bc^2z+bz+c^5x-c^4d-4c^3x-c^3yz+3c^2d+3cx+cyz-d$.\\

We define new coordinates on $\X (M)$ by replacing $d$ with its expression in function of $a,b,c,x,y,z$ and $k$.
\begin{equation}
d = \frac{bcy+acz+abx-a^2-b^2-c^2 +2+k}{abc} .
\end{equation}
 The equation (\ref{fricke}) becomes
 \begin{equation}\label{fri3}
\left( \frac{x}{c} \right)^2 + (acz-a^2-c^2+2) \left( \frac{x}{c}\frac{y}{a} \right) + \left( \frac{y}{a} \right)^2 + 2 D x +2Ey +F = 0 , 
\end{equation}
with constants $D,E,F$ depending on the $\tau_U$-invariant coordinates $a,b,c,z,k$. We make a change of variable $y' = \frac{y}{a}$ and $x' = \frac{x}{c} $and the equation $(\ref{fri3})$ becomes
\begin{equation}
x'^2 + (acz-a^2-c^2+2)x' y' + y'^2 + 2 D' x +2E'y +F = 0 , 
\end{equation}
with\\

$D' = \dfrac{1}{abc}(a^2c^2-2a^2-ab^2cz-ac^3z+2acz+b^2c^2-2b^2+c^4-c^2k-4c^2+2k+4)$\\

$E' = \dfrac{1}{abc}(a^4-a^3cz+a^2b^2+a^2c^2-a^2k-4a^2-ab^2cz+2acz-2b^2-2c^2+2k+4)$\\

$F = \dfrac{1}{c^2b^2a^2}(a^4+a^3b^2cz-2a^3cz+a^2b^2c^2k-2a^2b^2c^2+2a^2b^2+a^2c^2z^2+2a^2c^2-2a^2k-4a^2+ab^4cz+ab^2c^3z-ab^2ckz-4ab^2cz-2ac^3z+2ackz+4acz+b^4+2b^2c^2-2b^2k-4b^2+c^4-2c^2k-4c^2+k^2+4k+4)$.\\

We denote $u = \tr (\rho (U)) = acz - a^2 - c^2 +2$ and when $u \neq \pm 2$ we rewrite \ref{fri3} as
$$Q_u ( x-x_0 (u) , y - y_0(u) ) = R ,$$
where\\
$x'_0 = \dfrac{1 }{abc(u^2-4)}(-a^4u+a^3cuz-a^2b^2u-a^2c^2u+2a^2c^2+a^2ku+4a^2u-4a^2+ab^2cuz-2ab^2cz-2ac^3z-2acuz+4acz+2b^2c^2+2b^2u-4b^2+2c^4-2c^2k+2c^2u-8c^2-2ku+4k-4u+8)$\\

$y'_0 = \dfrac{1 }{abc(u^2-4)}(2a^4-2a^3cz+2a^2b^2-a^2c^2u+2a^2c^2-2a^2k+2a^2u-8a^2+ab^2cuz-2ab^2cz+ac^3uz-2acuz+4acz-b^2c^2u+2b^2u-4b^2-c^4u+c^2ku+4c^2u-4c^2-2ku+4k-4u+8)$\\

and 
$$ R = \dfrac{(a^2 +c^2 +z^2 -acz-4)((b^2 -2)^2 +u^2 +k^2 -(b^2 -2)uk -4)}{b^2 (4-u^2 )} .$$

The term $R$ is also $\tau_U$-invariant. For a particular value of $-2 < u < 2 $, the left term of the equation is a quadratic function on $(x',z')$ with positive coefficients. In the right term $R$ we recognize that $(a,c,z)$ and $(b^2 -2,u,k)$  are the characters of representations in $\SU (2)$ of the free groups $\langle A, C \rangle$ and $\langle BB , CCAA \rangle$ respectively, as we have $ AC = Z$ and $CCAABB = K $. So according to Fricke's relation, we have:
\begin{align}
&(a^2 +c^2 +z^2 -acz-4) < 0, \\
&((b^2 -2)^2 +u^2 +k^2 -(b^2 -2)uk-4) < 0.
\end{align}
Moreover $4 - u^2 > 0$, so we deduce that $R>0$. So the set of coordinates $(x',y')$ satisfying the equation corresponds to an ellipse. This exhibits $\X_{\mathcal{C}} (M)$ as a family of ellipses $E_{\mathcal{C}} (M) (a,b,c,z)$ that are parametrized by $(k,a,b,c,z)$. Now we express the action of $\tau_U$ on $x',y'$ using $x'_0 , y'_0$, which gives us :
$$ \left[ \begin{array}{c} x' \\ y' \end{array} \right] \mapsto
 \left[ \begin{array}{c} x'_0(u) \\ y'_0(u) \end{array} \right] +
\left[ \begin{array}{cc} -1 & -u \\ (u^2 - 1) & u \end{array} \right] \cdot
\left( \left[ \begin{array}{c} x' \\ y' \end{array} \right] -
 \left[ \begin{array}{c} x'_0(u) \\ y'_0(u) \end{array} \right] \right) .$$
 
 This transformation is a rotation of angle $2 \theta_U = -2 \cos ^{-1} (U/2)$ on the ellipse $E_{\mathcal{C}} (M) (a,b,c,z)$ defined for fixed $(k,a,b,c,z)$. In fact, for any boundary trace $k$ and for almost all $(a,b,c,z)$, $\theta_U$ is an irrational multiple of $\pi$. So for almost all $(a,b,c,z)$, the action of $\tau_U$ is ergodic on the set $E_{\mathcal{C}} (M) (a,b,c,z)$. Finally, let $f : \X(M) \rightarrow \R $ be a $\tau_U$-invariant measurable function. We deduce from the Lemma of ergodic decomposition, that there exists a function $H \, : \, [-2,2]^5 \longrightarrow \mathbb{R}$ such that $f([\rho]) = H(k,a,b,c,z)$ almost everywhere.
 
Finally, we split the surface along the curve $X=AB$ to obtain a surface $A$ which is a three holed projective plane whose boundary components are $K$ and $X_-, X_+$ which correspond to the two sides of $X$. According to the proposition $\ref{thpp3}$, a $\Gamma_A$-invariant measurable function $f : \X (A) \longrightarrow \mathbb{R}$ which is , is almost everywhere equal to a function depending only on the boundary character $(x,x,k)$. So we have $f([\rho]) = G(k,x)$ almost everywhere. Combining the two arguments proves that a $\Gamma_M$-invariant function $f$ depends only on the traces of the boundary component $k$. This ends the proof of Proposition \ref{thn31}.
\end{proof}

\section{Surface of odd genus}\label{section:conclusion}

In this section, we prove Theorem \ref{thm:open} in the case of a non-orientable surface of odd genus $k$. We split the surface $M$ along a separating curve, such that one of the subsurface is orientable, and the other is a non-orientable surface of Euler characteristic $-2$. Two cases occur depending on the genus.

\subsection{When the genus is greater than 3}
Let $M$ be the non-orientable surface $N_{2g+1,m}$ with $g \geq 1$ and $\chi (M) < -2$. Let $C$ be a separating circle such that one of the two subsurfaces is the surface $A = N_{3,1}$ and the other is the orientable surface $B = \Sigma_{g,m+1}$ of genus $g$ with $m+1$ boundary components.

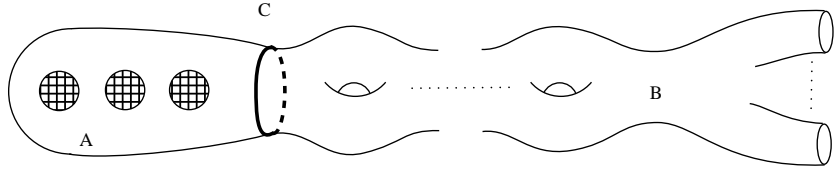
\begin{figure}[ht]
\begin{center}
\scalebox{0.6} % Change this value to rescale the drawing.
{
\begin{pspicture}(0,-2.221875)(19.217188,2.246875)
\psarc[linewidth=0.03](2.3271875,-0.496875){1.39}{90.0}{270.0}
\pscircle[linewidth=0.03,dimen=outer,fillstyle=crosshatch*,hatchwidth=0.04,hatchangle=0,hatchsep=0.14](3.5171876,-0.466875){0.45}
\pscircle[linewidth=0.03,dimen=outer,fillstyle=crosshatch*,hatchwidth=0.04,hatchangle=0,hatchsep=0.14](4.9371877,-0.466875){0.45}
\psarc[linewidth=0.03](8.587188,-0.696875){0.39}{25.0}{155.0}
\psarc[linewidth=0.03](8.6171875,0.273125){0.88}{-140.0}{-40.0}
\psarc[linewidth=0.03](13.147187,-0.696875){0.39}{25.0}{155.0}
\psarc[linewidth=0.03](13.177188,0.273125){0.88}{-140.0}{-40.0}
\psbezier[linewidth=0.03](2.3171875,0.893125)(3.271238,0.9542026)(5.6276937,0.80022335)(6.6014915,0.5066742)(7.575289,0.213125)(7.911793,1.1232382)(8.807441,0.917643)(9.703088,0.7120478)(9.622504,0.4292545)(10.457188,0.413125)
\psbezier[linewidth=0.03](11.417188,0.433125)(12.1171875,0.413125)(12.417529,1.079247)(13.397187,0.913125)(14.376846,0.747003)(14.500669,0.3364951)(15.397187,0.393125)(16.293707,0.4497549)(16.997187,1.373125)(19.017187,1.293125)
\psbezier[linewidth=0.03](2.2771876,-1.886875)(3.2171874,-2.046875)(5.6771874,-1.726875)(6.6171875,-1.466875)(7.5571876,-1.206875)(7.8971877,-2.086875)(8.807441,-1.862357)(9.717694,-1.637839)(9.662504,-1.3507454)(10.477187,-1.366875)
\psbezier[linewidth=0.03](11.437187,-1.386875)(12.1171875,-1.306875)(12.437529,-1.9607531)(13.397187,-1.806875)(14.356846,-1.652997)(14.460669,-1.1835049)(15.317187,-1.186875)(16.173706,-1.1902452)(16.757187,-2.206875)(19.017187,-2.126875)
\psline[linewidth=0.04cm,linestyle=dotted,dotsep=0.16cm](9.857187,-0.426875)(12.037188,-0.406875)
\psellipse[linewidth=0.03,dimen=outer](19.047188,0.823125)(0.17,0.47)
\psellipse[linewidth=0.03,dimen=outer](19.007187,-1.666875)(0.17,0.46)
\psbezier[linewidth=0.03](19.037188,0.353125)(18.557188,0.413125)(18.017187,0.173125)(17.477188,0.053125)
\psbezier[linewidth=0.03](18.977188,-1.206875)(18.117188,-1.166875)(17.737188,-0.826875)(17.357187,-0.766875)
\psline[linewidth=0.04cm,linestyle=dotted,dotsep=0.16cm](18.737188,0.173125)(18.717188,-0.886875)
\pscircle[linewidth=0.03,dimen=outer,fillstyle=crosshatch*,hatchwidth=0.04,hatchangle=0,hatchsep=0.14](2.0571876,-0.486875){0.45}
\psbezier[linewidth=0.08](6.772197,0.473125)(6.5472426,0.513125)(6.41736,0.09292267)(6.4171877,-0.426875)(6.417015,-0.9466727)(6.4272676,-1.666875)(6.8171873,-1.406875)
\psbezier[linewidth=0.08,linestyle=dashed,dash=0.16cm 0.16cm](6.7371874,0.493125)(6.9771876,0.453125)(7.0571876,-0.046875)(7.0571876,-0.486875)(7.0571876,-0.926875)(6.9571877,-1.446875)(6.7971873,-1.426875)
\usefont{T1}{ptm}{m}{n}
\rput(2.66375,-1.556875){A}
\usefont{T1}{ptm}{m}{n}
\rput(15.284375,-0.536875){B}
\usefont{T1}{ptm}{m}{n}
\rput(6.604375,1.323125){C}
\end{pspicture} 
}
\end{center}
\caption{Decomposition of $N_{2g+1,m}$}
\end{figure}

 Let $f : \X(M) \rightarrow \R$ be a measurable $\Gamma_M$-invariant function. The Dehn twist $\tau_C$ about the curve $C$ acts on the generic fiber of the application $j : \X (M) \rightarrow \X(M;A,B,C)$ as the rotation of angle $2 \theta_C = 2 \cos^{-1} (\tr (\rho (C)))$ for any representation $\rho$ in the fiber $j^{-1}( [\rho_A] , [\rho_B] )$.  For almost all $( [\rho_A] , [\rho_B] ) \in \X(M;A,B,C)$, the angle is an irrational multiple of $\pi$ and the $\tau_C$-action is ergodic. By the lemma of ergodic decomposition, there exists a measurable function $h : \X(M;A,B,C) \rightarrow \R$ such that $f = h \circ j$ almost everywhere. There are natural injective maps $\Gamma_A \hookrightarrow \Gamma_M$ and $\Gamma_B \hookrightarrow \Gamma_M$. Hence, the function $h$ is $\Gamma_A$-invariant and $\Gamma_B$-invariant.

Next, consider the projection 
\begin{align*}
\phi : \X (M ; A,B,C) & \longrightarrow \X (B) \\
([\rho_A ] , [\rho_B ] ) \longmapsto [\rho_B ].
\end{align*}
The fiber $\phi^{-1} ([\rho_B])$ can be identified with $\X_c (A)$ where $c = \tr (\rho_B (C))$. According to Proposition \ref{thn31}, the mapping class group $\Gamma_A$ acts ergodically on the fibers of $\phi$. Thus, by the lemma of ergodic decomposition, there exists a measurable function $H : \X (B) \rightarrow \R$ such that $h = H \circ \phi$ almost everywhere. Moreover, the function $H$ is $\Gamma_B$-invariant.

We infer from the ergodicity result in the orientable case, that a $\Gamma_B$-invariant function is almost everywhere constant on almost every level set of the application 
\begin{align*}
\partial^{\sharp} : \X (B) & \longrightarrow [-2 , 2]^{m+1} \\
[\rho] & \longmapsto (\tr (\rho (k_1)) , ... , \tr (\rho (k_m)), \tr (\rho (C) ) ).
\end{align*}

So, there exists a measurable function $F : [-2 , 2]^{m+1} \rightarrow \R$ such that  $$f = F \circ \delta^{\sharp} \circ \phi \circ j = F (\tr (\rho (k_1)) , ... , \tr (\rho (k_m)), \tr (\rho (C) ) ) \mbox{ almost everywhere.}$$ 

The orientable surface $B$ has negative Euler characteristic $\chi (B) \leq -1$ and hence $B$ can be decomposed into pants. The surface $A$ can be decomposed into a pair of pants and a 2-holed projective plane such that the curve $C$ is a boundary component of the pair of pants. Gluing the two pairs of pants containing $C$ as a boundary component gives a 4-holed sphere $S$, which is embedded in $M$. The circle $C$ is essential in the surface $S$. The ergocity result in the case of a 4-holed sphere shows that a $\Gamma_S$-invariant function is almost everywhere equal to a function that does not depend on the trace $\tr (\rho (C))$. The function $f$ is $\Gamma_S$-invariant, and hence is almost everywhere equal to a function depending only on the traces of the boundaries $\mathcal{C} = (k_1 , \dots , k_m)$, which proves ergodicity in this case.

\subsection{In genus 1}.
Let $M$ be the non-orientable surface $N_{1,m}$ with $m > 3$, and let $C$ be a separating circle such that one of the two subsurfaces is the surface $A = \N_{1,3}$ and the other is a $(m+1)$-holed sphere $B = \Sigma_{0,m+1}$. 

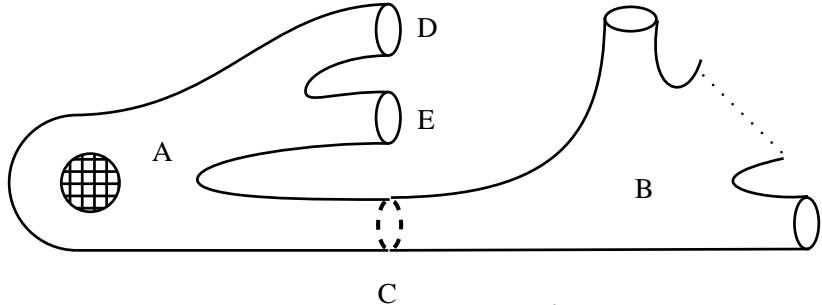
\begin{figure}[ht]
\begin{center}
\scalebox{0.9} % Change this value to rescale the drawing.
{
\begin{pspicture}(0,-2.266875)(12.88375,2.306875)
\psarc[linewidth=0.04](1.90375,-1.246875){1.0}{90.0}{270.0}
\pscircle[linewidth=0.04,dimen=outer,fillstyle=crosshatch*,hatchwidth=0.04,hatchangle=0,hatchsep=0.14](2.10375,-1.246875){0.45}
\psellipse[linewidth=0.04,dimen=outer](6.50375,1.013125)(0.2,0.4)
\psellipse[linewidth=0.04,dimen=outer](6.50375,-0.286875)(0.2,0.4)
\psbezier[linewidth=0.04](1.90375,-0.246875)(4.0306315,-0.206875)(4.6736426,1.353125)(6.50375,1.393125)
\psbezier[linewidth=0.04](6.4840484,0.633125)(5.90375,0.633125)(5.32375,0.393125)(5.28375,0.133125)(5.24375,-0.126875)(5.72375,0.113125)(6.50375,0.093125)
\psbezier[linewidth=0.04](6.50375,-0.666875)(5.46375,-0.646875)(3.7248688,-0.87354165)(3.6843095,-1.1902083)(3.64375,-1.506875)(5.4892044,-1.4957639)(6.5234704,-1.4957639)
\psline[linewidth=0.04cm](1.90375,-2.246875)(6.50375,-2.246875)
\psellipse[linewidth=0.06,linestyle=dashed,dash=0.16cm 0.16cm,dimen=outer](6.52375,-1.866875)(0.2,0.4)
\rput{-90.0}(8.910625,11.256875){\psellipse[linewidth=0.04,dimen=outer](10.08375,1.173125)(0.2,0.4)}
\psbezier[linewidth=0.04](6.54375,-1.466875)(8.96375,-1.446875)(9.68375,-0.586875)(9.70375,1.193125)
\psbezier[linewidth=0.04](10.48375,1.153125)(10.30375,0.073125)(10.92375,-0.126875)(11.12375,0.573125)
\psline[linewidth=0.04cm,linestyle=dotted,dotsep=0.16cm](11.14375,0.353125)(12.32375,-0.806875)
\psellipse[linewidth=0.04,dimen=outer](12.68375,-1.846875)(0.2,0.4)
\psline[linewidth=0.04cm](6.52375,-2.246875)(12.72375,-2.226875)
\psbezier[linewidth=0.04](12.70375,-1.446875)(12.00375,-1.526875)(10.80375,-1.226875)(12.34375,-0.886875)
\usefont{T1}{ptm}{m}{n}
\rput(3.1703124,-0.776875){A}
\usefont{T1}{ptm}{m}{n}
\rput(10.290937,-1.336875){B}
\usefont{T1}{ptm}{m}{n}
\rput(7.08375,1.043125){D}
\usefont{T1}{ptm}{m}{n}
\rput(7.065781,-0.316875){E}
\usefont{T1}{ptm}{m}{n}
\rput(6.4909377,-2.876875){C}
\end{pspicture} 
}
\end{center}
\caption{Decomposition of $N_{1,m}$}
\end{figure}

The proof uses the same arguments as in the previous case, the only difference is that we have to keep track of the two other boundary components of the subsurface $A$. Let $f : \X(M) \rightarrow \R$ be a measurable $\Gamma_M$-invariant function. The action of the Dehn twist about the curve $C$ is ergodic on almost all fibers of $j : \X (M) \rightarrow \X(M;A,B,C)$. The mapping class group  $\Gamma_A$ acts ergodically on almost every fibers of the map
\begin{align*}
\phi ' : \X (M ; A,B,C) & \longrightarrow \X (B) \times [-2 , 2]^2 \\
([\rho_A ] , [\rho_B ] ) & \longmapsto ([\rho_B ] , \tr (\rho_A ( D)) , \tr (\rho_A (E )) ).
\end{align*}
The mapping class group $\Gamma_B$ acts ergodically on almost every fiber of the map
\begin{align*}
\widetilde{\partial^{\sharp} }: \X (B) \times [-2 , 2]^2 & \longrightarrow [-2 , 2]^{m+3} \\
([\rho] , d , e) & \longmapsto (\tr (\rho (k_1)) , ... , \tr (\rho (k_m)), \tr (\rho (C) ),d,e ).
\end{align*}
Hence there exists a measurable function $F : [-2 , 2]^{m+3} \rightarrow \R$ such that  $f = F \circ \widetilde{\delta^{\sharp}} \circ \phi ' \circ j = F (\tr (\rho (k_1)) , ... , \tr (\rho (k_m)), \tr (\rho (C) ) ,\tr (\rho (D)) , \tr (\rho (E)))$ almost everywhere. As in the previous case we can find a 4-holed sphere $S$ embedded in $M$ such that $C$ is an essential circle in $S$. Hence the function $F$ does not depend on $\tr (\rho (C))$ and the proof of Theorem \ref{thm:open} is complete.
$\qed$

%\begin{acknowledgements}
%The author was partially supported by ANR Repsurf ANR-06-BLAN-0311. The author is grateful to his PhD thesis advisor, Louis Funar, for supervising this work and giving many suggestions. The author also thanks Maxime Wolff for carefully reading this paper, and William Goldman, Greg McShane and Eugene Xia for useful discussions and suggestions.
%\end{acknowledgements}


\begin{thebibliography}{}

\bibitem{freed} \textsc{Freed D.},
\emph{Reidemeister torsion, spectral sequences, and Brieskorn spheres},
J. Reine Angew. Math. , {\bf 429} (1992), 75--89.

\bibitem{frobenius} \textsc{Frobenius G., Schur I.}, \textit{\"Uber die reellen Darstellung der endlichen Gruppen}, Sitzungsberichte der k\"oniglich preussischen Akademie der Wissenschaften (1906), 186--208.

\bibitem{furst} \textsc{Furstenberg H.},
\emph{Recurence in Ergodic Theory and Combinatorial Number Theory},
Princeton University Press (1981)

\bibitem{gendulphe} \textsc{Gendulphe M.}, \textit{Paysage Systolique Des Surfaces Hyperboliques Compactes De Caracteristique -1},  arXiv:math/0508036.

\bibitem{goldman84} \textsc{Goldman W.},
\emph{The symplectic nature of fundamental groups of surfaces},
Adv. Math. , {\bf 85} (1984),200--225.

\bibitem{Goldman86} \textsc{Goldman W.},
\emph{Invariant functions on Lie groups and Hamiltonian flows of
surface group representations},
Invent.\ Math.\ {\bf 85} (1986), no.\ 2, 263--302.

\bibitem{goldman88} \textsc{Goldman W.},
\emph{Topological components of spaces of representations},
Invent.\ Math.\ {\bf 93} (1988), 557--607.

\bibitem{go1} \textsc{Goldman W.},
\emph{Ergodic theory on moduli spaces},
Ann.\ of Math.\ (2) {\bf 146} (1997), no.\ 3, 475--507.

\bibitem{goldman04} \textsc{Goldman W.},
\emph{The complex-symplectic geometry of
$SL(2,\mathbb{C})$-characters over surfaces},
Algebraic groups and arithmetic, 375--407,
Tata Inst.\ Fund.\ Res., Mumbai, 2004.

\bibitem{goldman06} \textsc{Goldman W.},
\emph{Mapping class group dynamics on surface group representations} In: Problems in Mapping Class Groups and Related Topics. Proceedings of Symposia in Pure Math. Amer. Math. Soc. (to appear).

\bibitem{jefwei} \textsc{Jeffrey L., Weitsman J.}, \textit{Bohr-Sommerfeld orbits in the moduli space of flat connections and the Verlinde dimension formula}, Commun. Math. Phys. \textbf{150} (1992), 595--620.

\bibitem{ho} \textsc{Ho N., Jeffrey L.}, \textit{The volume of the moduli space of flat connections on a nonorientable 2-manifold}, Commun. Math. Phys. \textbf{256} (2005), 539-564.


\bibitem{klein} \textsc{Klein D.}, \textit{Goldman flows on non-orientable surfaces}, arXiv:0710.5265.

\bibitem{korkmaz} \textsc{Korkmaz M.}, \textit{Mapping class group of nonorientable surfaces}, Geom. Dedicata \textbf{89} (2002) 109--133.

\bibitem{lickorish} \textsc{Lickorish W.B.R.}, \textit{Homeomorphisms on non-orientable two-manifolds}, Math. Proc. Cambridge Philos. Soc. \textbf{59} (1963), 307--317.

\bibitem{lickorish2} \textsc{Lickorish W.B.R.}, \textit{A finite set of generators for the homeotopy group of a 2-manifold}, Math. Proc. Cambridge Philos. Soc. \textbf{60} (1964), 769--778.

\bibitem{magnus} \textsc{Magnus W.}, \textit{Rings of Fricke characters and automorphism group of free groups}, Math. Zeitschrift \textbf{170} (1980), 91--103.

\bibitem{mangler} \textsc{Mangler W.}, \textit{Die Klassen topologisher Abbildungen einer geschlossenen Flache auf sich}, Math. Zeitschrift \textbf{44} (1939), 541--554.

\bibitem{mulase} \textsc{Mulase M., Penkava M.}, \textit{Volume of representation varieties}, arXiv:math/0212012 


\bibitem{nielsen} \textsc{Nielsen J.}, \textit{Die isomorphismen der allgemeinen unendlichen Gruppen mit zwei Erzeugenden}, Math. Ann. \textbf{78} (1918) , 385--397.

\bibitem{picxia1} \textsc{Pickrell D., Xia E.}, \textit{Ergodicity of mapping class group actions on representation variety, I. Closed surfaces}, Comment. Math. Helv. \textbf{77} (2001), 339--362.

\bibitem{picxia2} \textsc{Pickrell D., Xia E.}, \textit{Ergodicity of mapping class group actions on representation variety, II. Surfaces with boundaries}, Transformation Groups \textbf{8} (2003), 397--402.

\bibitem{prexia1} \textsc{Previte J., Xia E.}, \textit{Topological dynamics on Moduli Spaces I}, Pac. J. Math. \textbf{193} (2000), 397--417.

\bibitem{prexia2} \textsc{Previte J., Xia E.}, \textit{Topological dynamics on Moduli Spaces II}, Trans. Amer. Math. Soc. \textbf{354} (2002), 2475--2494.

\bibitem{prexia3} \textsc{Previte J., Xia E.}, \textit{Exceptional discrete mapping class group orbits in moduli spaces}, Forum Math. \textbf{15} (2003), 949-954.

\bibitem{stukow} \textsc{Stukow M.}, \textit{The twist subgroup of the mapping class group of a nonorientable surface}, arXiv:0709.2798

\bibitem{witten} \textsc{Witten E.}, \textit{On quantum gauge theory in two dimensions}, Commun. Math. Phys. \textbf{141} (1991), 153--209.

\end{thebibliography}
\end{document}